\documentclass[a4paper,11pt]{amsart} 
\usepackage{amssymb, amsmath}
\usepackage{amscd}
\numberwithin{equation}{section}
\usepackage{epsfig}
\usepackage{amsmath}
\usepackage{amsfonts,amssymb,amsopn}

\usepackage{amsthm}
\usepackage{hyperref}
\usepackage{verbatim}
\usepackage{color}
\usepackage[color,all]{xy}
\usepackage{xcolor}

\newcommand{\cE}{{\mathcal E}}
\newcommand{\cF}{{\mathcal F}}
\newcommand{\cH}{{\mathcal H}}
\newcommand{\cL}{{\mathcal L}}

\newcommand{\cO}{{\mathcal O}}
\newcommand{\cU}{{\mathcal U}}

\newcommand{\cX}{{\mathcal X}}

\newcommand{\bB}{{\mathbb B}}
\newcommand{\C}{{\mathbb C}}
\newcommand{\bH}{{\mathbb H}}
\newcommand{\bN}{{\mathbb N}}
\newcommand{\PP}{{\mathbb P}}
\newcommand{\bZ}{{\mathbb Z}}

\newcommand{\tT}{\widetilde{T}}

\newcommand{\fg}{\mathfrak{g}}
\newcommand{\fh}{\mathfrak{h}}

\newcommand{\fp}{\mathfrak{p}}
\newcommand{\fq}{\mathfrak{q}}
\newcommand{\fs}{\mathfrak{s}}

\newcommand{\Sym}{\mathrm{Sym}}
\newcommand{\Ker}{\mathrm{Ker}}
\newcommand{\Aut}{\mathrm{Aut}\,}
\newcommand{\Image}{\mathrm{Im}\,}
\newcommand{\Iden}{\mathrm{Id}}
\newcommand{\Hom}{\mathrm{Hom}}
\newcommand{\rank}{\mathrm{rk}\,}
\newcommand{\isom}{\xrightarrow{\sim}}

\newcommand{\Kx}{K_{X}}
\newcommand{\Ox}{{{\cO}_{X}}}

\newcommand{\Pic}{\mathrm{Pic}}
\newcommand{\Sec}{\mathrm{Sec}}
\newcommand{\Hilb}{\mathrm{Hilb}}

\newcommand{\GL}{\mathrm{GL}}
\newcommand{\PGL}{\mathrm{PGL}}
\newcommand{\SL}{\mathrm{SL}}
\newcommand{\SO}{\mathrm{SO}}

\newcommand{\Sp}{\mathrm{Sp}}

\newcommand{\PSp}{\mathrm{PSp}}
\newcommand{\Spin}{\mathrm{Spin}}
\newcommand{\Or}{\mathrm{O}}
\newcommand{\Gr}{\mathrm{Gr}}
\newcommand{\OG}{\mathrm{OG}}

\newcommand{\Urd}{U_X (r, d)}

\newcommand{\MGd}{M_X (G, \delta)}
\newcommand{\Ms}{M_X (G, \delta ; P, s)}

\newcommand{\ud}{{\underline{d}}}
\newcommand{\uU}{{\underline{U}}}
\newcommand{\Ru}{{\mathrm{R_u}}}
\newcommand{\Mso}{\Ms^\circ}

\newtheorem{theorem}{{\textbf Theorem}}[section]
\newtheorem{proposition}[theorem]{{\textbf Proposition}}
\newtheorem{corollary}[theorem]{{\textbf Corollary}}
\newtheorem{lemma}[theorem]{{\textbf Lemma}}

\newtheorem{remit}[theorem]{{\textbf Remark}}
\newenvironment{remark}{\begin{remit}\rm}{\end{remit}}
\newtheorem{exit}[theorem]{{\textbf Example}}
\newenvironment{example}{\begin{exit}\rm}{\end{exit}}
\newtheorem{defnit}[theorem]{{\textbf Definition}}
\newenvironment{definition}{\begin{defnit}\rm}{\end{defnit}}

\newtheorem{mainthm}{Theorem}

\title[Segre invariants of principal bundles]{Segre invariants of principal bundles over a curve}

\author{George H.\ Hitching}
\address{G.\ H.\ Hitching, Oslo Metropolitan University, Postboks 4, St. Olavs plass, 0130 Oslo, Norway} \email{gehahi@oslomet.no}

\author{Alfonso Zamora}

\address{A. Zamora, Departamento de Matem\'atica Aplicada a las TIC, ETSI Inform\'aticos, Universidad Polit\'ecnica de Madrid, Campus de Montegancedo, 28660 Madrid, Spain} \email{alfonso.zamora@upm.es}

\begin{document}

\begin{abstract}
For a vector bundle $V$ over a curve $X$, the Segre invariant $s_n (V)$ encodes the maximal degree attained by rank $n$ subbundles of $V$. The functions $s_n$ define stratifications on moduli of $V$ which are well studied. Let $G$ be a connected reductive algebraic group, and $E \to X$ a principal $G$-bundle. For each parabolic subgroup $P \subset G$ there is a Segre number $s_P (E)$, generalising $s_n (V)$. We show that $s_P$ is semicontinuous in families of $G$-bundles, and thus defines stratifications on moduli spaces of $G$-bundles over $X$. We study the invariance properties of $s_P$, relating the behaviour of $s_P$ and $s_{\phi(P)}$ for a surjective homomorphism $\phi \colon G \to H$ and allowing us to compare the Segre stratifications for $G$ and $H$. Finally, we analyse the stratification for the Borel subgroup $B$ of $\GL_3$, identifying patterns in the geometry and proving, in particular, a sharp Hirschowitz-type bound on $s_B (E)$ for certain topological types.
\end{abstract}

\maketitle

\section{Introduction}

Let $X$ be a complex projective smooth curve of genus $g \ge 2$. A vector bundle $V$ of rank $r$ and degree $d$ over $X$ is said to be \emph{semistable} if
\[
\mu(W)=\frac{\deg W}{\rank W} \ \le \ \mu(V)=\frac{d}{r}
\]
for all proper nonzero subbundles $W \subset V$. It is \emph{stable} if inequality is strict for all $W$. The relevance of stability for moduli questions and other purposes is very well known: a moduli space $\Urd$ exists for stable vector bundles of rank $r$ and degree $d$ over $X$, which is an irreducible quasiprojective variety of dimension $r^2 (g-1) + 1$. More generally, each vector bundle over $X$ admits a unique Harder--Narasimhan filtration \cite{HN75}, and the topological invariants of this filtration (ranks and degrees) are shown to be semicontinuous \cite{Sha77} and to define a stratification on any family of bundles \cite{Nit11}.

A refinement of the notion of stability is the following. For each $1 \le n \le r-1$, the \emph{Segre invariant} $s_n (V)$ is defined by
\begin{equation} \label{sndefn}
s_n (V) \ := \ \min \{ nd - r \deg W : W \subset V \hbox{ a subbundle of rank } n \} .
\end{equation}
Noting that $nd - r \deg W = rn ( \mu (V) - \mu(W) )$, we see that $V$ is stable if and only if $s_n (V) > 0$ for each $n$, and strictly semistable if $s_n (V) \geq 0$ for each $n$ and there exists a $k$ such that $s_k(V)=0$. Thus Segre invariants are naturally ``degrees of stability'', measuring how far a vector bundle is from being destabilised by a rank $n$ subbundle. The term ``invariant'' is applied to $s_n$ because for any $V$, for $1 \le n \le \rank V$ we have $s_n (V \otimes L) = s_n (V)$ for all line bundles $L$. In other words, $s_n$ is determined by the projective bundle $\PP V$. 

We recall that stability and semistability are open conditions on families of bundles. Generalising this statement we get: if $s \equiv nd \;(\!\!\mod r)$ then the locus
\begin{equation} \label{Segrestratum-vectorbundles}
U_X (r, d; n, s) \ = \ \{ V \in \Urd : s_n (V) = s \}
\end{equation}
is locally closed in $\Urd$, and $U_X (r, d; n, s')$ is contained in the closure of $U_X (r, d; n, s)$ for $s' < s$. Thus the $s_n$ define stratifications on $\Urd$, which are well understood \cite{RT, BL}. 

Notice that there are important differences between Segre and Harder--Narasimhan stratifications. As mentioned above, the Harder--Narasimhan filtration of any bundle $V$ is unique. In contrast, if $V$ is semistable then a proper subbundle of maximal slope and maximal rank may not be unique (consider a direct sum of line bundles of the same degree). This is reflected in the fact that the Jordan--H\"older filtration of a semistable bundle is not in general unique; but only the associated graded bundle. If $V$ is stable, then it may have many maximal subbundles of a given rank with mutually nonisomorphic graded bundles. (Indeed, the enumeration of maximal subbundles is a focus of \cite{Hol04} and \cite{LN}). 

Furthermore, although for $r = 2$ the Segre and Harder--Narasimhan stratifications on any family of bundles coincide, for $r \ge 3$ they diverge. There is a single Harder-Narasimhan stratification on any family whose terms have different ranks and degrees. In contrast, the family admits a distinct Segre stratification for \emph{each} rank $n$ with $1 \le n \le r-1$, and the stratum to which a given bundle $V$ belongs is determined solely by the maximal degree of subbundles of rank $n$ in $V$.

The goal of the present work is to study a generalisation of the Segre invariant for an arbitrary connected reductive group $G$ and an arbitrary parabolic subgroup $P$. We begin by recalling the definition of stability for principal $G$-bundles. 

\begin{definition}\cite[Definition 1.1]{Rth1} \label{stabilityGbundle}
Let $G$ be a connected reductive linear algebraic group over $\C$. A principal $G$-bundle $E \to X$ is said to be \emph{semistable} if for every maximal parabolic subgroup $Q \subset G$ and every reduction of structure group $\sigma \colon X \to E/Q$, we have $\deg \sigma^* T_{\pi} \geq 0$, where $T_{\pi}$ is the tangent bundle along fibres of the the associated $G/Q$-fibration $\pi: E/Q \to X$. A principal $G$-bundle $E \to X$ is said to be \emph{stable} if strict inequality holds for all $\sigma$.
\end{definition}

By \cite{Rth2}, for each $\delta \in \pi_1 (G)$ there exists a moduli space $\MGd$ parameterising isomorphism classes of stable $G$-principal bundles of topological type $\delta$. This is an irreducible quasiprojective variety of dimension $(g-1) \cdot \dim G + \dim Z(G)$. Following \cite{HN} and \cite{CH1}, we consider a function which refines the above notion of stability in the same way as the Segre invariants $s_n$ refine stability for vector bundles.

\begin{definition} \label{sPDefn}
Let $E \to X$ be a principal $G$-bundle. Let $P$ be a parabolic subgroup, not necessarily maximal, and $\pi \colon E/P \to X$ the associated $G/P$-fibration. We define
\[
s_P (E) \ := \ \min \left\{ \deg \sigma^* T_\pi : \sigma \colon X \to E/P \hbox{ a reduction of structure group to } P \right\} .
\]
\end{definition}

\noindent We will see in {\S} \ref{characterisations} that this is equivalent to the function denoted $s(V; P)$ in \cite[p.\ 193]{CH1}. In view of Definition \ref{stabilityGbundle}, we see that when $P$ ranges over all maximal parabolics of $G$, the numbers $s_P (E)$ of Definition \ref{sPDefn} give a measure of the stability of $E$. Note that $E$ is stable if $s_P(E)>0$ for every maximal parabolic $P$, and strictly semistable if $s_P(E)\geq 0$ for every maximal parabolic $P$ and there exists a maximal parabolic $Q$ for which $s_Q(E)=0$.

Let us show that $s_P$ generalises the Segre invariants $s_n$ defined in (\ref{sndefn}). It is well known that any rank $r$ vector bundle $V$ is an associated bundle of a principal $\GL_r$-bundle $E \to X$, and that a rank $n$ subbundle $F \subset V$ corresponds naturally to a reduction of structure group $\sigma \colon X \to E/P$ where $P \subset \GL_r$ stabilises a subspace of dimension $n$ in $\C^r$. We observe that $s_P (E)$ coincides with the Segre invariant $s_n (V)$ defined above. For: In this case, $\pi \colon E/P \to X$ is canonically identified with the Grassmann bundle $\Gr (n, V)$, and $F \cong \sigma^* \cU$ where $\cU \subset \pi^* V$ is the universal subbundle. As $T_\pi \cong \Hom ( \cU, \pi^* V / \cU )$, we have $\sigma^* T_{\pi} \cong \Hom ( F, V/F )$ and
\begin{equation} \label{sPsndef}
\deg \sigma^* T_\pi \ = \ n \cdot ( \deg V - \deg F) - (r - n) \cdot \deg F \ = \ n \cdot \deg V - r \cdot \deg F .
\end{equation}
It now follows from (\ref{sndefn}) that $s_P (E) = s_n (V)$. Thus $s_P$ is indeed a generalisation of the Segre invariant $s_n$ for vector bundles.

For another example: Suppose that $W$ is a \emph{symplectic bundle}; that is, $W$ admits an $\Ox$-valued nondegenerate skewsymmetric bilinear form. By linear algebra, $\rank W$ is an even number $2n$, and an isotropic subbundle $F \subset W$ has rank at most $n$. In \cite{CH1, CH2}, a Segre-type invariant is defined which encodes the maximal degree attained by rank $n$ isotropic subbundles of a symplectic bundle:
\begin{equation}\label{sn-symp-def}
t ( W ) \ := \ \min \{ -2 \cdot \deg F : \, F \hbox{ a rank $n$ isotropic subbundle of } W \}.
\end{equation}
Now in this case, $W$ is the associated bundle of a principal $\Sp_{2n}$-bundle $E \to X$, and a rank $n$ isotropic subbundle corresponds to a reduction $\sigma \colon X \to E/P$ where $P \subset \Sp_{2n}$ is the parabolic subgroup stabilising an isotropic subspace $\Lambda \subset \C^{2n}$ of dimension $n$. Here $E/P$ is identified with the \emph{Lagrangian Grassmann bundle} parameterising $n$-dimensional isotropic subspaces of fibres of $W$. By Lie theory, $\fs\fp_{2n} / \fp \cong \Sym^2 \Lambda^*$, whence $T_\pi \cong \Sym^2 \cU^*$ where $\cU \subset \pi^* W$ is the universal isotropic subbundle. As $F = \sigma^* \cU$, we have $\sigma^* T_\pi \cong \Sym^2 F^*$. 
It follows that
\[
s_P (E) \ = \ \min \{ \deg \,\Sym^2 F^* : F \subset W \hbox{ a rank $n$ isotropic subbundle} \}
\]
and the number (\ref{sn-symp-def}) satisfies $t(V) = \frac{2}{n+1} \cdot s_P (E)$. Thus Definition \ref{sPDefn}, up to a constant factor\footnote{The choice of coefficient $-2$ rather than $-(n+1)$ in (\ref{sn-symp-def}) was to make $t(V)$ a generalisation of the Segre invariant of rank two bundles.}, also generalises the invariant $t(V)$.

The invariant $t(V)$ also makes sense for maximal rank isotropic subbundles of \emph{orthogonal} vector bundles, and is studied in \cite{CH2, CH3}. If $G = \SO_r$ and $P$ is the stabiliser of an isotropic subspace of maximal dimension $\left\lfloor \frac{r}{2} \right\rfloor$ in $\C^r$, a similar argument to the above shows that $s_P$ also generalises $t(V)$ in the orthogonal case, up to a constant factor.

The structure and results of the present article are as follows. In section \ref{sec:G} we collect preliminary results on reductive groups and principal $G$ bundles. In section \ref{FirstProps}, we give some alternative definitions of and some elementary properties of the function $s_P$, and prove:

\begin{mainthm}\label{thm:mainA}
Let $G$ be a connected reductive algebraic group and let $P\subset G$ be a parabolic subgroup. 
\begin{itemize}
\item[(i)] The function $s_P$ is semicontinuous in families of $G$-bundles of fixed topological type $\delta \in \pi_1 (G)$. Thus it induces a stratification of the moduli space $M_X(G,\delta)$ into locally closed subsets 
\[\Mso \ := \ \{ E \in \MGd : s_P (E) = s \}.\] 
\item[(ii)] The function $s_P$ depends only on the conjugacy class of $P\subset G$.
\end{itemize}
\end{mainthm}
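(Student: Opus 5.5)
Part (ii) is the easy part, so I would dispose of it first. If $P' = gPg^{-1}$ for some $g \in G$, then right multiplication by $g$ on $E$ gives an isomorphism of $X$-schemes $E/P \isom E/P'$, namely $eP \mapsto eg^{-1}P'$. This map commutes with the projections to $X$, so it carries sections to sections bijectively and identifies the vertical tangent bundles $T_\pi$ and $T_{\pi'}$. Consequently the sets of degrees $\deg \sigma^* T_\pi$ coincide, and $s_P(E) = s_{P'}(E)$.

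For (i) I would first rewrite $\deg \sigma^* T_\pi$ in Lie-theoretic terms. If $\sigma$ corresponds to a $P$-bundle $E_\sigma$, then $\sigma^* T_\pi \cong E_\sigma \times^P (\fg/\fp)$. Its degree is therefore $\deg \chi(E_\sigma)$, where $\chi = \det(\fg/\fp)$ is the character $\det \mathrm{Ad}_{\fg/\fp}$ of $P$; this is a strictly anti-dominant-type character of $P$. The question is then one of semicontinuity of $\min_\sigma \deg \chi_*(E_\sigma)$ in families.

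Let $\{E_t\}_{t \in T}$ be a family parametrised by a scheme $T$ of finite type, and form the relative fibration $\cE/P \to X \times T$. Sections of $(E_t/P) \to X$ of fixed class are parametrised by a relative Hilbert (or Quot) scheme $\Sec_d \to T$ of sections with $\deg \sigma^* T_\pi = d$; this object is projective over $T$. Indeed, a section is determined by its graph, which is a curve in $\cE/P$. Fixing $d$ together with a $\pi$-ample line bundle on $E/P$ (some power of $\chi$ or of $\chi^{-1}$, up to sign) bounds the Hilbert polynomial of the graph, because $d$ determines its degree against a relatively ample bundle. Hence the locus $\{ t : s_P(E_t) \le d \}$ is the image of the finitely many components $\Sec_{d'} \to T$ with $d' \le d$.

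The main obstacle is that this union might a priori be infinite, since $d'$ is not bounded below. I would handle this with the standard boundedness of degrees of reductions of a fixed family. Embed $G$ in $\GL(V)$ via a faithful representation, so that reductions become flags of subbundles of the associated family of vector bundles. Degrees of subbundles of a bounded family are bounded above, which bounds $\deg \chi(E_\sigma)$ below uniformly in $t$; for a Borel subgroup this is immediate from Harder--Narasimhan type arguments. With this bound, only finitely many $d'$ occur, and each image is closed by properness. Hence $\{ t : s_P(E_t) \le d\}$ is closed, so $s_P$ is lower semicontinuous, and the level sets $\{ s_P = s\}$ are locally closed. To pass to $\MGd$ I would apply this to a local universal family, or to an étale atlas of the moduli stack / the Quot-scheme parameter space. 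On that parameter space $s_P$ is constant on orbits and the quotient map is a good quotient, hence submersive, so the strata descend to locally closed subsets $\Mso$.
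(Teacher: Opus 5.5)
\paragraph{Part (ii) and the descent.} Your part (ii) is correct. It is more direct than the paper's route, which uses extension of structure group by an inner automorphism together with Proposition~\ref{sPinv}. Your isomorphism $eP \mapsto eg^{-1}P'$ of $E/P$ with $E/gPg^{-1}$ over $X$ is exactly what underlies that argument. Your descent to $\MGd$ via a parameter space and a good quotient is also fine. The problem is in (i).

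\paragraph{The gap.} You assert that the scheme $\Sec_{d} \to T$ of sections with $\deg \sigma^*T_\pi = d$ is projective over $T$. This is false. It is only an open subscheme of the relative Hilbert scheme, and it is not closed there: a flat limit of graphs of sections can be the graph of a section together with curves lying in fibres of $\pi$.

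This already happens for $G=\GL_2$, $P=B$. Let $V_t$ be the extension of $N(x)$ by a line bundle $M$ with $\deg M < \deg N$, with class $t e$. Here $e \neq 0$ lies in the one-dimensional kernel of $H^1(X, N(x)^{-1}M) \to H^1(X, N^{-1}M)$.
\begin{itemize}
\item For $t \neq 0$, the inclusion $N \to N(x)$ lifts uniquely to $V_t$, and the lift is a line subbundle $N \subset V_t$.
\item As $t \to 0$, this lift degenerates to $N \to N(x) \subset N(x)\oplus M = V_0$, which vanishes at $x$.
\item The limit of the graphs in $\PP V_0$ is therefore the section given by $N(x)$ plus the fibre over $x$.
\item $V_0$ has no line subbundle of degree $\deg N$ at all.
\end{itemize}
So the image in $T$ of a single $\Sec_{d'}$ need not be closed, and your step ``each image is closed by properness'' fails. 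Bounding $d'$ from below does not repair this.

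\paragraph{The missing step.} You must replace $\Sec_{d'}$ by its closure in the relative Hilbert scheme, namely the component $\cH$ through the graph of a section. This closure is proper over $T$, but you then have to show that its image still lies in $\{s_P \le d'\}$. The paper does this in two moves.
\begin{itemize}
\item Flatness makes $\chi(\cO_{\cX_h})$ constant, and likewise $\chi$ of the pullback of a degree-one line bundle from $X$. By \cite[Lemma 2.2]{HN}, every member of $\cH$ is then $D \cup D_1 \cup \cdots \cup D_k$, with $D=\tau(X)$ a section and the $D_i$ vertical.
\item $\det T_\pi$ restricts on each fibre $G/P$ to the anticanonical bundle, which is ample. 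So each $D_i$ contributes positively, and $\deg \tau^*T_\pi \le \deg (\det T_\pi)|_{\cX_h} = d'$.
\end{itemize}

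The sign you left open (``some power of $\chi$ or of $\chi^{-1}$, up to sign'', and ``anti-dominant-type'') is exactly what matters here. It must be $\det T_\pi$ itself that is positive on vertical curves. Otherwise bubbling would raise the degree, and semicontinuity would go the wrong way.

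\paragraph{The lower bound.} For the lower bound on $d'$, the cleanest argument is the paper's. The bundle $\sigma^*T_\pi \cong E_\sigma(\fg/\fp)$ is a quotient of fixed rank of the adjoint bundle, and such quotients in a bounded family have degree bounded below. Your faithful-representation sketch does not obviously control the degree of $\det(\fg/\fp)$.
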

The content of Theorem \ref{thm:mainA} (i) is proved in Theorem \ref{semicont} and Corollary \ref{cor-stratification}. We then proceed in section \ref{invariance} to study invariance properties of $s_P$ generalising the invariance properties of the Segre invariant $s_n$ for vector bundles. We determine the relation between $s_P$ and $s_{\phi (P)}$ where $\phi \colon G \to H$ is a surjective homomorphism of connected algebraic groups (Proposition \ref{sPinv}). This will justify the term ``invariant'' and allow us to prove Theorem \ref{thm:mainA} (ii) in Corollary \ref{ConjInv}.

This allows us in section \ref{sec:stratificacions} to compare the stratifications on certain components of the moduli spaces of principal bundles for two different groups related by a surjective homomorphism. 
\begin{mainthm} \label{thm:mainB}
Let $\phi \colon G \to H$ be a surjective homomorphism of connected reductive algebraic groups such that $\Ker (\phi)_0 \subseteq Z(G)$. 
\begin{enumerate}
\item[(i)] For every parabolic subgroup $P \subset G$, the Segre invariants $s_P (E)$ and $s_{\phi(P)} (E(H))$ coincide. In particular, a principal $G$-bundle $E \to X$ is (semi)stable if and only if the associated principal $H$-bundle $E(H)$ is (semi)stable.
\item[(ii)] Given topological types $\delta \in \pi_1 (G)$ and $\phi_*\delta\in\pi_1(H)$, we obtain a morphism between the moduli spaces $\phi_\delta \colon M_X (G, \delta ) \ \to \ M_X (H, \phi_* \delta )$ induced by extension of structure group which relates the Segre stratifications by 
\[\phi_\delta^{-1} M_X (H, \phi_* \delta ; \phi (P), s)^\circ = M_X (G, \delta ; P, s)^\circ.\] 
Moreover, whenever $\phi_\delta$ is surjective, the number of non-empty strata $\Ms^\circ$ equals the number of non-empty strata  $M_X (H, \phi_* \delta ; \phi (P), s)^\circ$.
\end{enumerate}
\end{mainthm}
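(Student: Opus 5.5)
The plan is to reduce everything to a comparison of the fibrations $E/P \to X$ and $E(H)/\phi(P) \to X$, and then read off both parts.

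\emph{Setup.} Write $K = \Ker(\phi)$ and $Q = \phi(P)$. Since $K_0 \subseteq Z(G)$, the group $K_0$ lies in every Borel subgroup, hence in $P$. The group $K$ itself lies in the normaliser of $P$, because $gPg^{-1}$ and $P$ have the same image under $\phi$. Since parabolics are self-normalising, this gives $K \subseteq P$. It follows that $Q$ is parabolic in $H$ and that $\phi^{-1}(Q) = P$. Hence the induced map $G/P \to H/Q$ is a $G$-equivariant isomorphism.

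\emph{Reductions correspond, with the same degree.} The equivariant isomorphism $G/P \to H/Q$ gives a canonical isomorphism of fibrations $E/P = E\times^G G/P \cong E(H)\times^H H/Q = E(H)/Q$ over $X$. So reductions $\sigma$ of $E$ to $P$ correspond bijectively to reductions $\sigma'$ of $E(H)$ to $Q$. Moreover, the relative tangent bundles $T_\pi$ and $T_{\pi'}$ are identified under this isomorphism. At the level of Lie algebras this is the statement that $d\phi$ induces a $P$-equivariant isomorphism $\fg/\fp \to \fh/\fq$; this holds because $\ker d\phi = \mathrm{Lie}(K) \subseteq \fp$ and the dimensions agree. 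Therefore $\deg \sigma^*T_\pi = \deg \sigma'^*T_{\pi'}$, and taking minima gives $s_P(E) = s_{\phi(P)}(E(H))$.

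\emph{The (semi)stability statement.} By Definition~\ref{stabilityGbundle}, (semi)stability is tested on maximal parabolics. Every parabolic of $H$ has the form $\phi(P)$ with $P = \phi^{-1}(Q)$ parabolic, since $\phi$ is surjective. Maximality is preserved in both directions because $P \mapsto \phi(P)$ is an inclusion-preserving bijection. Combined with the equality of Segre invariants, this gives the (semi)stability statement. This presumably recovers Proposition~\ref{sPinv} in the case where $\Ker(\phi)_0$ is central.

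\emph{Part (ii).} Extension of structure group $E \mapsto E(H)$ preserves stability by (i) and sends type $\delta$ to $\phi_*\delta$. It also works in families: apply it to a family over $T$, then use the coarse moduli property of $M_X(H,\phi_*\delta)$ to obtain a morphism $\phi_\delta$. The preimage identity is then immediate from (i), since $s_P(E) = s_{\phi(P)}(\phi_\delta(E))$ pointwise.

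If $\phi_\delta$ is surjective, the identity shows that $M_X(G,\delta;P,s)^\circ$ is nonempty exactly when $M_X(H,\phi_*\delta;\phi(P),s)^\circ$ is, because preimages of nonempty sets under a surjection are nonempty. So the nonempty strata correspond via the same index $s$, and their numbers agree.

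\emph{Main obstacle.} The delicate step is the first one: checking $K \subseteq P$ when $K$ is disconnected, and making the identification of $T_\pi$ with $T_{\pi'}$ precise. If self-normalisation is awkward to invoke directly, I would argue instead that $K$ is normal in $G$ with $K_0$ central, so $K$ is a finite extension of a central subgroup. Then $K$ is central in the connected group $G$, because conjugation by the connected group $G$ acts trivially on the finite group $K/K_0$, which forces $K$ to centralise. Central elements lie in every parabolic, so again $K \subseteq P$.
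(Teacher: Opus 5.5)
Your overall route is the paper's. You show $\phi^{-1}(\phi(P))=P$, deduce an equivariant isomorphism $G/P\cong H/\phi(P)$, hence $E/P\cong E(H)/\phi(P)$ over $X$ with matching vertical tangent bundles, and then read off (ii) pointwise. Your treatment of maximality, via the inclusion-preserving bijection $P\mapsto\phi(P)$, $Q\mapsto\phi^{-1}(Q)$, and of the morphism $\phi_\delta$ via families is, if anything, more explicit than the paper's.

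The problem is the step you yourself flag, $K:=\Ker(\phi)\subseteq P$. Neither of your justifications works as written.
\begin{itemize}
\item For $k\in K$, the fact that $kPk^{-1}$ and $P$ have the same image under $\phi$ only says $kPk^{-1}K=PK$. To conclude $kPk^{-1}=P$ you would need $P=\phi^{-1}(\phi(P))=PK$, which is exactly what you are trying to prove.
\item The ``finite normal subgroup of a connected group is central'' argument does not transfer to $K$, which is not finite. Connectedness of $G$ only gives that $G$ acts trivially on $K/K_0$, i.e.\ $gkg^{-1}k^{-1}\in K_0$ for all $g\in G$, $k\in K$. This does not by itself say that $K$ is central.
\end{itemize}

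Either argument is easily repaired. The paper's route (Lemma \ref{parprops}(b) together with Lemma \ref{PcontainsZ}) runs as follows. Since $K_0\subseteq Z(G)\subseteq P$, the subgroup $P':=\phi^{-1}(\phi(P))=PK$ contains $P$ with finite index. Also $P'$ contains a Borel subgroup, so it is parabolic and hence connected, which forces $P'=P$.

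Alternatively, keep your commutator observation and use it directly. For $k\in K$ and $p\in P$ you have $kpk^{-1}=(kpk^{-1}p^{-1})\,p\in K_0P=P$. So $K\subseteq N_G(P)=P$ by self-normalisation, with no need for centrality. If you want centrality anyway: $g\mapsto gkg^{-1}k^{-1}$ is then a homomorphism from $G$ to the central group $K_0$. Its image is connected and lies in the finite group $[G,G]\cap Z(G)$, hence is trivial.

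With this step fixed, the rest of your proposal goes through as written.
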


Theorem \ref{thm:mainB} is proved in the statements of Theorem \ref{RelateStrata}. Examples are given in $\S$ \ref{sec:examples}: by using the adjoint map and the universal covering map we relate stratifications of $\GL_r$ and $\SL_r$ to $\PGL_r$ (and intermediate quotients), symplectic and projective symplectic groups, and orthogonal and spin groups, covering all substantial cases of complex classical Lie groups of Dynkin type ABCD, from the simply connected to the adjoint group. 

The examples in section \ref{sec:stratificacions} show that the Segre function remains invariant when taking the adjoint group of $G$ and seem to indicate that the information on the Segre stratification is intrinsically contained the moduli space of adjoint bundles, as happens with $\GL_r$ and $\PGL_r$. One might thus suppose that the center of $G$ does not substantially affect the information contained in Segre invariant: we obtain a correspondence between the stratifications for the components of trivial topological type in the corresponding moduli spaces of $G$ and $G$ mod a subgroup of the center of $G$. What happens with the other components of the moduli space of $G$-bundles, the behaviour of the stratification under Langlands dual groups (which exchange the fundamental group and the centre) and the geometry of the stratification for other groups are interesting questions which will be the subject of further investigation. 

We finally devote section \ref{sec:Borel} to studying the simplest case not corresponding to a maximal parabolic subgroup: the stratification associated to the Borel subgroup $B$ of $\GL_3$.  Contrasting with the case of a maximal parabolic, the strata are no longer irreducible, and in fact are not equidimensional, in general. 

For each $\ud := (d_1, d_2, d_3) \in \bZ^3$ with $d_1+d_2+d_3=\delta$, we denote by $M ( \ud )$ the locus of stable principal $\GL_3$-bundles in $M_X (\GL_3, \delta)$ admitting a $B$-reduction of type $\ud$ (see section \ref{ssec:iterated_extensions}). The following statements are proven in Section \ref{sec:Borel}.

\begin{mainthm}\label{thm:mainC}
Let $\ud=(d_1, d_2, d_3)\in\bZ^3$ be a tuple satisfying $d_1 < \frac{\delta}{3} < d_3$. Set $s = 2 (d_3 - d_1 )$.

\begin{itemize}
\item[(i)] The locus $M(\ud)$ is nonempty and irreducible of dimension at most $6g - 5 + s$. Moreover, $M(\ud)$ is contained in the Segre stratum $M_X (\GL_3, \delta; B, s)$.
\item[(ii)] We have
\[M (\ud) \ \subseteq \ \overline{M (d_1 - 1, d_2 + 1, d_3)} \cap \overline{M (d_1, d_2 - 1, d_3 + 1)}.\]
In particular, $M_X (\GL_3, \delta ; B, s)^\circ \subseteq \overline{M_X (\GL_3, \delta ; B, s + 2)}$.
\item[(iii)] (Hirschowitz bound) For $\delta \equiv 0 \;(\!\!\mod 3)$ and $g \equiv 1 \;(\!\!\mod 6)$, any $E \in M_X (\GL_3, 0)$ satisfies $s_B (E) \le 3 (g-1)$.
\end{itemize}
\end{mainthm}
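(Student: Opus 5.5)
The plan is to describe $M(\ud)$ explicitly through iterated extensions, and then to obtain (ii) and (iii) from this description together with dimension counts. A $B$-reduction of a rank $3$ bundle $V$ (the $\GL_3$-bundle $E$) is a flag of subbundles $0 \subset L_1 \subset F_2 \subset V$, of type $\ud$ when $\deg L_1 = d_1$, $\deg F_2/L_1 = d_2$ and $\deg V/F_2 = d_3$. Set $L_2 = F_2/L_1$ and $L_3 = V/F_2$. The relative tangent bundle of the full flag bundle is filtered with graded pieces $L_i^{-1}L_j$ for $i<j$. Hence
\[
\deg \sigma^* T_\pi \ = \ (d_2-d_1)+(d_3-d_1)+(d_3-d_2) \ = \ 2(d_3-d_1) \ = \ s .
\]
So every $E \in M(\ud)$ has $s_B(E) \le s$, which is the containment in the (closed) stratum $M_X(\GL_3,\delta;B,s)$.

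\textbf{Part (i).} I would build a parameter space $\mathcal P$ as a tower. The base is $\Pic^{d_1}\times\Pic^{d_2}\times\Pic^{d_3}$. Over it sits the projectivised space of extensions $0\to L_1\to F_2\to L_2\to 0$, and over that the projectivised space of extensions $0 \to F_2 \to V \to L_3 \to 0$. The condition $d_1<\delta/3<d_3$ makes $h^1(L_2^{-1}L_1)$ and $h^1(L_3^{-1}F_2)$ constant, given by Riemann--Roch. The first is $g-1+d_2-d_1$ generically; if $d_2<d_1$ one may need to pass to a dense open subset. The second is $2(g-1)+2d_3-d_1-d_2$. Thus $\mathcal P$ is irreducible of dimension
\[
3g+(g-2+d_2-d_1)+(2g-3+2d_3-d_1-d_2) \ = \ 6g-5+s .
\]
Then $M(\ud)$ is the image of the open subset of $\mathcal P$ parametrising stable $V$, so it is irreducible of dimension at most $6g-5+s$. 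For nonemptiness I would show that a general point of $\mathcal P$ is stable. I would bound the dimension of the locus of points admitting a destabilising line subbundle or quotient, in the style of Lange--Narasimhan, and use $d_1<\delta/3<d_3$.

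\textbf{Part (ii).} I would argue in two steps, one for each pair of adjacent degrees.
\begin{itemize}
\item For $(d_1-1,d_2+1,d_3)$, apply the rank $2$ Segre stratification (Lange--Narasimhan) to $F_2$. A rank $2$ bundle with a line subbundle of degree $d_1$ is a limit of rank $2$ bundles whose subbundles all have degree at most $d_1-1$, each carrying a line subbundle of degree exactly $d_1-1$. Choosing the family of $F_2$'s and then lifting the extension class by $L_3$ over it (the $h^1$ are constant) gives a family in $M(d_1-1,d_2+1,d_3)$ specialising to $V$.
\item The case $(d_1,d_2-1,d_3+1)$ follows dually, by applying the same argument to $V^*$, or to the quotient $V/L_1$.
\end{itemize}
The final statement then follows from the first, because $2(d_3-(d_1-1)) = s+2$.

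\textbf{Part (iii).} Take $s=3(g-1)$, for instance $\ud = \left(-\tfrac{g-1}{2},-\tfrac{g-1}{2},g-1\right)$; the congruence on $g$ makes these integral and permits the required choice. Then $6g-5+s = 9g-8 = \dim M_X(\GL_3,0)$. If $M(\ud)$ has full dimension it is dense. Consider the relative scheme of generalised flags (iterated Quot schemes allowing non-saturated subsheaves) of type $\ud$ over the moduli space. It is proper over the moduli space, so its image is closed and therefore everything. Saturating a non-saturated flag only raises $d_1$ and lowers $d_3$, so $s_B(E) \le 3(g-1)$ for all $E$.

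The main obstacle is to show that $\dim M(\ud)$ equals $6g-5+s$, rather than merely being bounded by it. This requires the map $\mathcal P \to M$ to be generically finite, meaning a general such $V$ has only finitely many flags of type $\ud$. I would first check this through the tangent space $H^0(\sigma^*T_\pi)$, showing that $H^0$ vanishes at a general point, where the expected dimension is $\chi(\sigma^* T_\pi) = s-3(g-1) = 0$. This vanishing is exactly where the congruence condition on $g$ may be needed.
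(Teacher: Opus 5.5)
\textbf{The main gap is in (iii).} Your type $\ud=\left(-\frac{g-1}{2},-\frac{g-1}{2},g-1\right)$ does not give a dense $M(\ud)$, so the vanishing you plan to check is false for it. Among the graded pieces you list, $L_2^{-1}L_3$ is in fact a subbundle of $\sigma^*T_\pi$: it is the vertical tangent of the forgetful map from the flag bundle to $\PP V$, $(L_1\subset F)\mapsto L_1$. Here $\deg L_2^{-1}L_3=\frac{3(g-1)}{2}$, so $h^0(\sigma^*T_\pi)\ge\frac{g-1}{2}>0$ for every such flag. More directly, such a flag gives a line subbundle of degree $-\frac{g-1}{2}$, so $s_1(V)\le\frac{3(g-1)}{2}$. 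But a general $V\in M_X(\GL_3,0)$ has $s_1(V)=2(g-1)$ by Theorem \ref{GLrCase}.

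The missing idea is to balance the type so that both line subbundles $L_1^{-1}L_2$ and $L_2^{-1}L_3$ of $N_\sigma$ have degree at most $g-1$ while $s=3(g-1)$. The paper takes $\ud=(-5u+k,u+k,4u+k)$ with $g=6u+1$, so that $d_2-d_1=g-1$ and $d_3-d_1=\frac{3(g-1)}{2}$. This, not integrality of $\frac{g-1}{2}$, is the role of $g\equiv 1 \pmod 6$. It then proves $h^0(N_\sigma)=0$ from $0\to L_1^{-1}L_2\to N_\sigma\to F^*\otimes L_3\to 0$:
\begin{itemize}
\item the left term has degree $g-1$, so it has no sections for general $L_1,L_2$;
\item since $d_2-d_1=g-1$, the extension $F$ may be taken general in moduli, so by Hirschowitz's lemma $F^*\otimes L_3$ is Brill--Noether general with $\chi=0$, hence has no sections.
\end{itemize}
Generic finiteness of $\Phi_\ud$ then gives $\dim M(\ud)=9g-8$. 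Your final properness/saturation step is fine; it is equivalent to the paper's appeal to Theorem \ref{semicont}.

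\textbf{Gaps in (i).} A tower of \emph{projectivised} extension spaces in the fixed order $L_1\subset F\subset V$ does not exhaust $M(\ud)$, for three reasons:
\begin{itemize}
\item it omits flags with $F=L_1\oplus L_2$, which Lemma \ref{lemma:structure}(d) allows when $F'$ is nonsplit;
\item it is empty when $d_1-d_2>2g-2$, although $M(\ud)$ is not;
\item shrinking to an open set when $d_2\le d_1$ loses further points.
\end{itemize}
So ``$M(\ud)$ is the image of an irreducible variety'' is not established. The paper uses the full vector bundles of extension classes, subtracting the two $\C^*$-scalings afterwards. When $d_1\ge d_2$ it builds the tower the other way round, constructing $F'=V/L_1$ first using $d_2<d_3$, so the needed $h^0$-vanishings hold at every stable $V$.

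Your nonemptiness argument is only announced. The paper verifies stability of a general extension for the types $(d_0-1,d_0,d_0+1)$, $(d_0,d_0,d_0+1)$, $(d_0,d_0+1,d_0+1)$, and reaches every other type via Lemma \ref{lemma:deformations}.

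\textbf{Gaps in (ii).} Your rank-two claim is false when $d_2-d_1+2>g$: Nagata's bound $s_1\le g$ then forces a line subbundle of degree $\ge d_1$. More importantly, Theorem \ref{GLrCase} only concerns closures among \emph{stable} bundles, whereas $F$ may be unstable or split for stable $V$. What you need is Lemma \ref{lemma:closure}: every extension $F$ deforms over an irreducible base to extensions with a line subbundle of degree $d_1-1$. The paper proves this as follows:
\begin{itemize}
\item write $F$ as an extension of $Q$ by $P$ with $\deg P\ll 0$, so that $L_1=Q(-D)$ puts the class in the span of $D$;
\item show $\Sec^e(X)$ is not dense when $d_2-d_1<g-2$;
\item move the class along a curve into $\Sec^{e+1}(X)\setminus\Sec^e(X)$.
\end{itemize}
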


For certain choices of $\ud$, we can show that the dimension bound (i) on $M (\ud)$ is sharp (Proposition \ref{prop:GenFin}); while in other cases, $M(\ud)$ may be contained in $M_X (G, \delta; B, s')$ for $s' < s$ (Remark \ref{BadBeh}). 

A word of explanation regarding (iii) above: The number $s_P (E)$ can be arbitrarily large negative (for unstable bundles). However, \cite[Theorem 1.1]{HN} shows that it is bounded above $\deg \sigma^* T_\pi \ \le \ \dim g \cdot (G/P)$. For $G = \GL_r$ and $P$ a maximal parabolic, Hirschowitz \cite{Hir} gave a sharpening of this bound (see also \cite{CH0}). Analogous sharp bounds on $s_P$ for parabolics in the groups $\Sp_{2n}$ and $\SO_r$ stabilising maximal isotropic subspaces appeared in \cite{CH2, CH3}. Corollary \ref{cor:Hirschowitz} proves Theorem \ref{thm:mainC} (iv), giving a sharp upper bound for $s_B$; to our knowledge, the first example for a nonmaximal parabolic.

\subsection*{Acknowledgements}

The first named author thanks Insong Choe for very many stimulating conversations on Segre stratifications, and for his interest in this project. The second named author thanks Oslo Metropolitan University for hospitality while this work was carried out, and is partially supported by project PID2022-142024NB-I00 by the Spanish government.

\subsection*{Notation}

We denote the Lie algebra of an algebraic group ($G, H, P, \ldots$) by the corresponding Fraktur letter $(\fg, \fh, \fp, \ldots)$ If $\pi \colon F \to Y$ is a smooth morphism, we write
\[
T_\pi \ := \ \Ker \left( T_F \ \to \ T_Y \right) ,
\]
for the vertical tangent bundle; that is, is the subbundle of $T_F$ of vectors tangent to the fibres of $\pi$.

\section{Reductive groups and principal \texorpdfstring{$G$}{G}-bundles}\label{sec:G}

In this section, we review necessary facts on reductive algebraic groups and on principal $G$-bundles and their classification.

\subsection{Reductive groups}

A reference for the following is \cite{Spr}.

Let $G$ be a connected algebraic group over $\mathbb{C}$. The \emph{unipotent radical} $\Ru G$ is the maximal closed connected unipotent subgroup of $G$. If $\Ru G$ is trivial, then $G$ is said to be \emph{reductive}. A \emph{Borel} subgroup $B \subseteq G$ is a maximal closed and connected solvable algebraic subgroup. Fixing a faithful representation of $G$ on a vector space $V$, the quotient $G/B$ can be identified with the space of (generalized) full flags on $V$ and called the \emph{full-flag variety}; it is a complete variety. A subgroup $P \subseteq G$ is said to be \emph{parabolic} if $G/P$ is a complete variety or equivalently if $P$ contains a Borel subgroup. A Borel subgroup $B$ is a minimal parabolic subgroup with respect to inclusion.

A \emph{maximal torus} $T \subseteq G$ is a connected abelian subgroup that is maximal with respect to the inclusion, and any two maximal tori are conjugate by the action of $G$. We can fix a maximal torus $T$ and a Borel subgroup $B$ such that $T \subseteq B \subseteq G$, and then each conjugacy class of parabolic subgroups contains a unique representative $P$ satisfying $T \subseteq B \subseteq P \subseteq G$.

Let $X^{\ast}(T)=\{\chi:T\rightarrow \C^*\}$ and  $X_{\ast}(T)=\{\lambda:\C^*\rightarrow T\}$ be the \emph{lattices of characters} and \emph{cocharacters} (or $1$-parameter subgroups) of $T$. These are dual free abelian groups via the $\bZ$-pairing
\begin{equation} \label{XXpairing}
\left\langle \chi, \lambda \right\rangle \ \mapsto \ \deg \left( t \mapsto \chi ( \lambda (t) ) \right)
\end{equation}
for $\chi \in X^\ast(T)$ and $\lambda \in X_\ast(T)$.

Given a parabolic subgroup $P \subset G$, we have an exact sequence
\begin{equation}\label{UPLsequence}
1 \ \to \ \Ru P \ \to \ P \ \to \ L \ \to \ 1
\end{equation}
where $L$ is the maximal reductive quotient of $P$. Then $P$ contains $L$ as a \emph{Levi subgroup}, and we have the \emph{Levi decomposition} $P=\Ru P \rtimes L$. There are no nontrivial algebraic homomorphisms $\Ru P \to \C^*$, and therefore the characters of $P$ and $L$ are the same:
\begin{equation} \label{XstarP}
X^* (P) \ \cong \ X^* (L) .
\end{equation}

\subsection{Principal \texorpdfstring{$G$}{G}-bundles} \label{sec:principalGbundles}

Let us now recall basic facts about principal $G$-bundles. References for the following include \cite{Bal} and \cite{Gro}.

Let $Y$ be a variety over $\C$, and let $G$ be a complex algebraic group. A \emph{principal $G$-bundle} over $Y$ is a variety $E$ admitting a free right action of $G$ with quotient $Y$, and such that $E \to Y$ is locally trivial in the \'etale topology. In the same way as a vector bundle, a principal bundle $E$ may be recovered up to isomorphism from a cocycle of transition functions $g_{\alpha \beta} \colon U_{\alpha} \cap U_{\beta} \ \to \ G$ over a suitable open cover $\{ U_\alpha \}$ of $Y$. The cocycle $\{ g_{\alpha \beta} \}$ determines an element of the nonabelian cohomology set $H^1 (X, G)$.

\subsubsection{Associated bundles} \label{AssocBundles}

Let $F$ be a variety admitting a left action of $G$ via an algebraic representation $\rho \colon G \to \Aut (F)$. Given a $G$-bundle $E \to Y$, one may form the \emph{associated bundle}
\begin{equation} \label{AssocBundle}
E (\rho, F) \ := \ \frac{E \times F}{(e, f) \sim ( e \cdot g, \rho(g^{-1}) \cdot f)}
\end{equation}
which is a fibre bundle over $Y$ with fibre $F$. If no confusion should arise, we may write $E(\rho)$ or $E(F)$ for $E(\rho, F)$.

We list some important associated bundles of a principal $G$-bundle $E \to Y$, which will be used throughout.

\begin{itemize}
\item (\emph{Associated vector bundles}) If $\rho \colon G \to \GL_r$ is a linear representation, then the associated bundle $E(\rho, \C^r)$ is a vector bundle of rank $r$. In particular, if $\chi \colon G \to \C^* = \GL_1$ is a character, then $E(\chi, \C)$ is a line bundle.

\item (\emph{Extension of structure group}) Given a group homomorphism $G \to H$, we have an action of $G$ on $H$ by left translations. Then the associated bundle $E ( H )$ is in fact a principal $H$-bundle via the right $H$-action
\[
(e, h) \cdot \ell \ = \ (e, h \ell)
\]
for $(e, h)$ an equivalence class as in (\ref{AssocBundle}). In this case $E(H)$ is said to be obtained from $E$ by \emph{extension of structure group}.

\item (\emph{Homogeneous space bundles}) Let $Q \subset G$ be a closed subgroup. Consider the action of $G$ given by left translations of the left cosets $gQ$ for $g \in G$. Then the associated bundle $E(G/Q)$ is a fibre bundle with fibre $G/Q$, which we will often denote by $\pi \colon E/Q \rightarrow Y$. It is naturally identified with the globalised coset space $\{ eQ : e \in E \}$.
\end{itemize}

\subsubsection{Reductions of structure group} \label{sec:reductions}

Let $E \to Y$ be a principal $G$-bundle. If $Q \subset G$ is a closed subgroup, then $E \to E/Q$ is a principal $Q$-bundle. Let $\pi \colon E/Q \to Y$ be the projection. A section $\sigma \colon Y \to E/Q$ of $\pi$ determines a $Q$-bundle $\sigma^* E$ over $Y$, which will be denoted by $E_{Q, \sigma}$, or $E_Q$ when $\sigma$ is understood from the context. In local trivialisations, $\sigma$ may be understood as prescribing a choice of coset in $G/Q$ for each $y \in Y$.

Let us recall some key examples of reduction of structure group. 

\begin{example} \label{example:reductions}
Let $G$ be a connected reductive group and $E \to X$ a principal $G$-bundle.

\begin{enumerate}
\renewcommand{\labelenumi}{(\alph{enumi})}
\item Here we take $G = \GL_r$ or $\SL_r$. If $Q \subset G$ is a maximal parabolic subgroup, then $Q$ is the stabiliser of some subspace $\C^n \subset \C^r$ and $G/Q$ is the Grassmannian $\Gr (n, r)$. A reduction of structure group $\sigma \colon Y \to E/Q$ is equivalent to a choice of rank $n$ subbundle of $E(\C^r)$.
\item More generally, if $P \subset G$ is an arbitrary parabolic subgroup, then $P$ is obtained as the stabiliser of some (partial) flag $0 \subset \mathsf{V}_1 \subset \cdots \subset \mathsf{V}_{t-1} \subset \mathsf{V}_t = \C^r$ where $t \le r$. Then a reduction $\sigma \colon X \to E/P$ is equivalent to a flag of subbundles $0 \subset F_1 \subset \cdots \subset F_{t-1} \subset F_t = E(\C^r)$, where $\rank F_i = \dim \mathsf{V}_i$ for $1 \le i \le t$.
\item If $G = \Sp_{2n}$, a parabolic $P$ is the stabiliser of a (partial) flag of isotropic subspaces, and a reduction to $P$ is equivalent to a flag of isotropic subbundles of the associated vector bundle $E (\C^{2n})$. A similar statement holds for $G = \SO_r$.
\end{enumerate}

\end{example}

\subsection{Classification of \texorpdfstring{$G$}{G}-bundles}

Let us recall the correspondence between topological $G$-bundles and elements of the fundamental group of $G$. 
\begin{proposition} \cite[Proposition 5.1 and Remark 5.1]{Rth1} Let $G$ be a connected linear algebraic group.
\begin{enumerate}
\renewcommand{\labelenumi}{(\alph{enumi})}
\item Topological types of $G$-bundles on $X$ are in natural bijection with elements of $\pi_1 (G)$.
\item Let $E \to X$ be an $G$-bundle of topological type $\delta \in \pi_1 (G)$. If $\phi \colon G \to H$ is a homomorphism, then $E(H)$ is of topological type $\phi_* \delta \in \pi_1 (H)$, where $\phi_*$ is the induced map between the fundamental groups.
\end{enumerate}
\end{proposition}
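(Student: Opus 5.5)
Since the statement is quoted from \cite{Rth1}, the plan is to reconstruct Ramanathan's argument rather than find a new one.

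\emph{Part (a).} Since $X$ is a compact Riemann surface, topologically a closed oriented surface of real dimension $2$, it has a CW structure with one $0$-cell, $2g$ one-cells and a single $2$-cell. A topological $G$-bundle is trivial over the $1$-skeleton, because $G$ is connected and hence any $G$-bundle over a wedge of circles is trivial. So the bundle is determined by a clutching map $S^1 = \partial D^2 \to G$ along the boundary of the $2$-cell. Changing the trivialisation over the $1$-skeleton alters the clutching map only by a commutator-type product of loops. Since $\pi_1(G)$ is abelian ($G$ is a topological group), this contributes nothing to the homotopy class. Hence the clutching class in $[S^1, G] = \pi_1(G)$ is well defined and gives the bijection: every element is realised, and homotopic clutching maps give isomorphic bundles.

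A cleaner alternative uses a homotopy-theoretic description: topological $G$-bundles correspond to $[X, BG]$. Since $\pi_1(BG) = \pi_0(G) = 0$, and $X$ is $2$-dimensional with a single top cell, obstruction theory gives $[X, BG] \cong H^2(X; \pi_2(BG)) = H^2(X; \pi_1(G)) \cong \pi_1(G)$. This last isomorphism uses $H^2(X;\bZ) = \bZ$ and orientability.

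We also need that algebraic bundles, which are étale locally trivial, are topologically locally trivial. For a linear algebraic group $G$ over $\C$ this holds by the implicit function theorem in the analytic topology.

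\emph{Part (b).} This follows from naturality. Extending structure group along $\phi$ corresponds to composing with $B\phi \colon BG \to BH$, which induces $\phi_* \colon \pi_1(G) \to \pi_1(H)$ on $\pi_2$ of the classifying spaces. In the clutching picture it is even more direct: if $E$ has clutching map $c \colon S^1 \to G$, then $E(H)$ has clutching map $\phi \circ c$, whose class is $\phi_*[c]$.

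The main obstacle is the well-definedness in (a), namely that the class does not depend on the trivialisations chosen over the $1$-skeleton. The abelianness of $\pi_1(G)$ handles this, or equivalently the obstruction-theory identification does.
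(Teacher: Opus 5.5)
The paper gives no proof of this statement; it simply quotes it from \cite{Rth1}. Your clutching argument is the standard proof and is correct: trivialise over the $1$-skeleton (equivalently over $X$ minus a point, a wedge of circles), take the class in $\pi_1(G)$ of the gluing map, and note that the boundary loop is a product of commutators while $\pi_1(G)$ is abelian. Part (b) then follows by naturality, exactly as you say.

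If (a) is read as asserting that every class is realised by an \emph{algebraic} $G$-bundle, surjectivity needs one more line. The map $\pi_1(T) = X_*(T) \to \pi_1(G)$ is onto for a maximal torus $T \subset G$. So by (b), extending the $\C^*$-bundle of a degree-one line bundle along the cocharacters $\lambda \colon \C^* \to T$ realises all of $\pi_1(G)$.
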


We now discuss the notion of degree for principal bundles. This is a slight generalisation of \cite[Definition 3.2]{HN}, and is essentially equivalent to the notion of numerical type in \cite{Hol04}. See also \cite[{\S} 2.2.2]{Sch}.

\begin{definition} \label{defn:degree}
Let $H$ be any algebraic group, not necessarily reductive, and let $E$ be an $H$-bundle. Given a character $\chi \in X^* (H)$, we can form the associated line bundle $E(\chi, \C)$. Then the map
\[
\chi \ \mapsto \ \deg E(\chi, \C)
\]
defines an element $d_H (E)$ of $\Hom ( X^* (H), \bZ )$, which we call the \emph{degree} of $E$.
\end{definition}

For $G$ reductive, $d_G (E)$ is exactly the degree $d_E$ of \cite[Definition 3.2]{HN}. In particular, if $G = \GL_r$ then $d_G (E)$ coincides with the usual degree of the vector bundle $E (\C^r)$. To see this, observe that the determinant generates $X^* ( \GL_r )$, and evaluation at the determinant defines an isomorphism $\Hom ( X^* (\GL_r), \bZ ) \isom \bZ$ which sends $d_{\GL_r} (E)$ to $\deg E( \C^r )$. 

\begin{definition} \cite[p.\ 1]{Hol02} \label{def_num_type}
Let $P \subset G$ be a parabolic subgroup and and $\sigma \colon X \to E/P$ a reduction of structure group. Then the \emph{numerical type} of $\sigma$, denoted by $[ \sigma ]$, is the degree $d_P (E_{P, \sigma})$.
\end{definition}

\begin{remark}\label{rem-degreeGLn}
We include the group $H$ in the notation $d_H (E)$ for clarity. Let $\phi \colon G \to H$ be any homomorphism of algebraic groups. Then there is a natural map
\[
\phi_* \colon \Hom ( X^* (G) , \bZ) \ \to \ \Hom (X^* (H), \bZ )
\]
given by restriction to the image of $\phi^* \colon X^* (H) \to X^* (G)$. Unwinding definitions, we see that $d_H ( E(H) ) = \phi_* d_G ( E )$.

We observe this because in what follows, we may wish to consider the degree $d_G (E) \in \Hom (X^*(G), \bZ)$ or the degree $d_P ( E_P )$ of a $P$-bundle obtained from $E$ by reduction of structure group; and $\Hom ( X^* (P), \bZ )$ is typically a larger group than $\Hom ( X^* (G), \bZ )$. This will be illustrated in the following example, which we also consider in order to compare the notions of topological type and degree.
\end{remark}

\begin{example} \label{ParabolicInGLr}
Take $G = \GL_r$. In Example \ref{example:reductions} (a), we considered the parabolic subgroup $P$ of matrices of the form
\[
\begin{pmatrix}
A_1 & \star & \cdots & \star \\
0 & A_2 & \cdots & \star \\
\vdots & & \ddots & \vdots \\
0 & 0 & \cdots & A_t
\end{pmatrix}
\]
where $r_1 + \cdots + r_t = r$ and $A_i \in \GL_{r_i}$. Then
\[
X^* (P) \ = \ \left\{ {\det}_1^{k_1} \cdots {\det}_t^{k_t} : k_1 , \ldots , k_t \in \bZ \right\}
\]
where ${\det}_i$ is the character taking the above matrix to $\det A_i$, and
\[
\Image \left( X^* (G) \ \to \ X^* (P) \right) \ = \ \left\{ \left( {\det}_1 \cdots {\det}_t \right)^k : k \in \bZ \right\} .
\]
Let now $E_P \to X$ be a principal $P$-bundle and $E := E_P (G)$ the associated $G$-bundle. As discussed in Example \ref{example:reductions} (a), the associated vector bundle $V := E ( \C^r )$ has a filtration $0 = V_0 \subset V_1 \subset \cdots \subset V_{t-1} \subset V_t = V$. For $1 \le i \le t$, write $W_i := V_i / V_{i-1}$ for the summands of the associated graded bundle. Let $\phi \colon P \hookrightarrow G$ be the inclusion. Now
\begin{multline} \label{dPdG}
\left\langle \phi_* d_P (E_P) ,  {\det}^k \right\rangle \ = \ \left\langle d_P (E_P) , {\det}_1^k \cdots {\det}_t^k \right\rangle \ = \ k \cdot \deg W_1 + \cdots + k \cdot \deg W_t \ = \\
k \cdot \left( \deg W_1 + \cdots + \deg W_t \right) \ = \ k \cdot \deg V \ = \ \left\langle d_G (E) , {\det}^k \right\rangle .
\end{multline}

Let us relate the above discussion with the topological types of $E_P$ and $E$. Recall the exact sequence $1 \to \Ru P \to P \to L \to 1$ as in (\ref{UPLsequence}). The unipotent group $\Ru P$ is isomorphic as a variety to an affine space, and the Levi $L$ is the product $\prod_i \GL_{r_i}$. As the map $P \to L$ is a fibration with connected and simply connected fibres, Serre's long exact sequence of homotopy groups shows that the induced map $\pi_1 (P) \to \pi_1 (L)$ is an isomorphism. Thus the topological type of a $P$-bundle is determined by the degrees of the vector bundles appearing along the diagonal. Moreover, the map $\pi_1 (P) \to \pi_1 (\GL_r)$ induced by the inclusion $P \hookrightarrow \GL_r$ corresponds to adding the degrees of the $\GL_{r_i}$-bundles occurring along the diagonal, in analogy with the computation (\ref{dPdG}).

In this case we have $\pi_1 (P) \cong \Hom (X^* (P), \bZ)$ and $\pi_1 (G) \cong \Hom (X^* (G), \bZ)$. However, this need not hold for arbitrary $P$ and $G$; for example, the groups $\pi_1 ( P )$ and $\pi_1 (G)$ may have torsion, while $\Hom ( \Lambda, \bZ )$ is torsion free for any group $\Lambda$ since $\bZ$ is (see Remark \ref{TopTypeFiner} for an example). The precise relation between topological type and degree will be a subject of further investigation.
\end{example}

\section{Segre invariant for principal bundles} \label{FirstProps}

Henceforth, we will assume that $G$ is a connected reductive group. Let $E \to X$ be a principal $G$-bundle. For a parabolic subgroup $P \subset G$, not necessarily maximal, write $\pi \colon E/P \to X$ for the corresponding homogeneous space bundle (see \S \ref{sec:principalGbundles}). We now discuss further the Segre function set out in Definition \ref{sPDefn}. The term ``invariant'' will be justified in {\S} \ref{invariance}.

\subsection{Characterisations of \texorpdfstring{$s_P$}{s\_P}} \label{characterisations}

Here we give some equivalent definitions of the Segre function defined in the introduction
\[
s_P (E) \ = \ \min \{ \deg \sigma^* T_\pi : \sigma \colon X \to E/P \hbox{ a reduction of structure group to } P \} ;
\]
essentially, we are just making explicit some statements in \cite[p.\ 322--323]{HN}. Firstly, we show that the above is equivalent to the definition considered in \cite{HN} and formalised in \cite[p.\ 193]{CH1}, which we will use in the proof of semicontinuity of $s_P$ (Theorem \ref{semicont}). Abusing notation, given a map of varieties $f \colon Y \to Z$, we write $df$ both for the induced map $T_Y \to T_Z$ and that $T_Y \to f^* T_Z$, when these exist. The proof of the following lemma is left to the reader:

\begin{lemma} \label{lemma:normal}
Let $\pi \colon F \to Y$ be a smooth morphism of smooth varieties. Set
\[
T_\pi \ := \ \Ker \left( T_F \to \pi^* T_Y \right) \ \cong \ \Ker \left( d\pi \colon T_F \to T_Y \right) .
\]
Suppose that $\sigma \colon Y \to F$ is such that $\pi \circ \sigma = \Iden_Y$. Then $\sigma^* T_F$ splits canonically as $\sigma^* T_\pi \oplus T_Y$, and the normal bundle $N_\sigma := N_{\sigma(Y) / F}$ is isomorphic to $\sigma^* T_\pi$.
\end{lemma}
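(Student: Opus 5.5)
The plan is to restrict the relative tangent sequence of $\pi$ along the section $\sigma$ and split it with $d\sigma$. Because $\pi$ is smooth, $d\pi \colon T_F \to \pi^* T_Y$ is a surjection of vector bundles. This gives the exact sequence
\[
0 \ \to \ T_\pi \ \to \ T_F \ \to \ \pi^* T_Y \ \to \ 0 .
\]
I will also check that $T_\pi$ agrees with the kernel of $d\pi \colon T_F \to T_Y$ as in the Notation. This is just the tautology that a tangent vector at $f$ maps to zero in $T_{\pi(f)} Y$ if and only if it maps to zero in $(\pi^* T_Y)_f$.

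\textbf{Splitting.} Pulling the sequence back by $\sigma$ keeps it exact, since it is a sequence of vector bundles. Using $\pi\circ\sigma = \Iden_Y$, we have $\sigma^*\pi^* T_Y = T_Y$, so we get
\[
0 \to \sigma^* T_\pi \to \sigma^* T_F \to T_Y \to 0 .
\]
The chain rule gives $d\pi \circ d\sigma = d(\pi\circ\sigma) = \Iden_{T_Y}$, where $d\sigma \colon T_Y \to \sigma^* T_F$. Hence $d\sigma$ is a canonical splitting, and $\sigma^* T_F = \sigma^* T_\pi \oplus d\sigma(T_Y)$. No choices were made, so the splitting is canonical.

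\textbf{Normal bundle.} Since $\sigma$ has the left inverse $\pi$, it is a closed embedding; it is closed because it is the graph-type section of a separated morphism. Its tangent bundle $T_{\sigma(Y)}$, viewed inside $\sigma^*T_F$, is exactly $d\sigma(T_Y)$. Therefore
\[
N_\sigma = \sigma^* T_F / d\sigma(T_Y) ,
\]
and by the splitting this quotient is isomorphic to $\sigma^* T_\pi$. Equivalently, the composite $\sigma^*T_\pi \hookrightarrow \sigma^*T_F \to N_\sigma$ is an isomorphism, because its kernel is $\sigma^*T_\pi \cap d\sigma(T_Y) = 0$ and the ranks agree.

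\textbf{Main point to check.} The only step needing care is the identification of $\sigma^*T_\pi$ with the subbundle it defines. This requires exactness on the left after pullback, which holds because all three terms are locally free. Apart from that the argument is formal.
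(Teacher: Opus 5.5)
Your proof is correct, and it is the standard argument: restrict the relative tangent sequence $0 \to T_\pi \to T_F \to \pi^* T_Y \to 0$ along $\sigma$, split it with $d\sigma$ using $d\pi \circ d\sigma = \Iden_{T_Y}$, and identify $N_\sigma = \sigma^* T_F / d\sigma(T_Y)$ with the complementary summand $\sigma^* T_\pi$. The paper leaves this lemma to the reader, so there is no proof there to compare with, and your argument covers everything needed.
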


By Lemma \ref{lemma:normal}, we have
\begin{equation}\label{sPviaNumType}
s_P (E) \ = \ \min \left\{ \deg N_\sigma : \sigma \colon X \to E/P \hbox{ a reduction of structure group to } P \right\} .
\end{equation}

A variant of the above will also be used in Theorem \ref{semicont}. Firstly, we recall a definition. As the adjoint action of $P$ on $\fg$ preserves the subspace $\fp$, there is an exact sequence of $P$-modules 
\begin{equation} \label{pModSeq}
0 \ \to \ \fp \ \to \ \fg \ \to \ \fg/\fp \ \to \ 0 \; .
\end{equation}

\begin{definition} \label{IsotropyRepDefn}
The map $\iota \colon P \to \GL ( \fg / \fp )$ obtained from the above sequence is called the \emph{isotropy representation} of $P$.
\end{definition}

Now let $E$ and $\pi \colon E/P \to X$ be as above. As before, write $E_{P, \sigma} \to X$ for the principal $P$-bundle coming from a reduction $\sigma$. As described in \cite[{\S} 2]{Rth1}, the sequence (\ref{pModSeq}) induces an exact sequence of associated vector bundles
\begin{equation} \label{AdjSeq}
0 \ \to \ E_{P, \sigma} ( \fp ) \ \to \ E_{P, \sigma} ( \fg ) \ \to \ E_{P, \sigma} ( \fg / \fp ) \ \to \ 0 , 
\end{equation}
where $E_{P, \sigma} ( \fg / \fp )$ is identified with $\sigma^* T_\pi$. Thus
\begin{equation}\label{sPadjoint}
s_P (E) \ = \ \min \left\{ \deg E_{P, \sigma} ( \fg / \fp ) : \sigma \colon X \to E/P \hbox{ a reduction of structure group} \right\} .
\end{equation}
Observe that the vector bundle $E_{P, \sigma} ( \fg )$ coincides with the adjoint bundle $E ( \fg )$, since they have the same transition functions. 

Lastly, we reformulate (\ref{sPadjoint}) in terms of characters, to maintain contact with \cite{Hol02} and for use in $\S$ \ref{sec:Borel}. The determinant $\det \iota$ of the isotropy representation is an element of $X^* (P)$. Given a $G$-bundle $E$ with $P$-reduction $\sigma$, as in Definition \ref{def_num_type} we have the numerical type $[ \sigma ] = d_P (E_{P, \sigma}) \in \Hom ( X^* (P), \bZ )$. Evaluating this on $\det \iota$, we obtain
\[
\deg \left( E_P ( \det \iota , \C ) \right) \ = \ \deg \left( \det E_P ( \fg / \fp ) \right) \ = \ \deg \left( E_P ( \fg / \fp ) \right) .
\]
Thus we have another interpretation of $s_P (E)$ given by
\begin{equation}\label{sPpairing}
s_P (E) \ = \ \min \left\{ \langle [ \sigma ] , \det \iota \rangle : \sigma \colon X \to E/P \hbox{ a reduction of structure group} \right\} ,
\end{equation}
where $\langle \; ,\, \rangle$ is the natural pairing $X^* (P) \times \Hom ( X^* (P), \bZ ) \to \bZ$.

\subsection{Semicontinuity of the Segre invariant and stratification of the moduli of principal bundles}

We prove here Theorem \ref{thm:mainA} (i), that $s_P$ is a semicontinuous function in families of $G$-bundles. We use several ideas from \cite{HN}.

\begin{theorem} \label{semicont}
Let $G$ be a connected reductive algebraic group. Let $\cE \to B \times X$ be a family of principal $G$-bundles over a curve $Y$, where $B$ is a scheme of finite type. For each parabolic subgroup $P$ of $G$ and for each integer $s$, the set
\begin{equation} \label{Bs}
\left\{ b \in B : s_P ( \cE_b ) \le s \right\}
\end{equation}
is closed in $B$. In particular, $s_P$ is lower semicontinuous in families of $G$-bundles. 
\end{theorem}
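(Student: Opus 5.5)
Consider the relative homogeneous bundle $\pi_B \colon \cE/P \to B \times X$, and the composite $p \colon \cE/P \to B$. For each $b \in B$, reductions $\sigma \colon X \to \cE_b/P$ are sections of the fibre $\cE_b/P \to X$. Such sections are determined by their graphs, which are curves in the fibre $p^{-1}(b)$. So I will parametrise all reductions of all members of the family by a relative Hilbert scheme, or equivalently by a relative scheme of sections. The plan has three steps.

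\textbf{Step 1 (parameter scheme).} Let $\cX := \cE/P$. It is projective over $B \times X$, hence over $B$ after choosing a relatively ample line bundle. The fibres of $\pi_B$ are $G/P$, and $G/P$ is projective, so this is possible. The sections are parametrised by an open subscheme $\mathrm{Sec}$ of $\Hilb_{\cX/B}$: a graph is a closed subscheme of $p^{-1}(b)$ mapping isomorphically onto $X$, and this condition is open. By (\ref{sPviaNumType}) and (\ref{sPadjoint}), $\deg \sigma^* T_{\pi} = \deg E_{P,\sigma}(\fg/\fp)$. This degree is locally constant on $\mathrm{Sec}$, because it is determined by the Hilbert polynomial of the graph, and the Hilbert polynomial is locally constant in flat families. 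Let $\mathrm{Sec}_{\le s}$ be the union of the components where this degree is at most $s$. Its image in $B$ is exactly the set (\ref{Bs}).

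\textbf{Step 2 (finite type).} The obstacle is that $\mathrm{Sec}_{\le s}$ may have infinitely many components: fixing $\deg N_\sigma$ does not obviously fix the Hilbert polynomial with respect to a given polarisation. Following \cite{HN}, I will bound the numerical type $[\sigma] \in \Hom(X^*(P), \bZ)$ whenever $\langle [\sigma], \det\iota\rangle \le s$. The argument goes through the Levi quotient $L$. The classes in $\Hom(X^*(P),\bZ)$ that occur are constrained in two ways. First, their image in $\Hom(X^*(G),\bZ)$ equals $d_G(\cE_b)$, as in Remark \ref{rem-degreeGLn}. Second, $B$ is of finite type, so the bundles $\cE_b$ form a bounded family. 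Boundedness gives a uniform bound on the degrees of $P$-subobjects of the associated vector bundles $\cE_b(V)$ in a faithful representation $V$. Concretely, for each dominant character $\chi$ of $P$ the line subbundle $E_P(\chi)^{-1} \subset \cE_b(V_\chi)$ has degree bounded above uniformly in $b$.

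Combining these bounds with $\langle [\sigma], \det\iota\rangle \le s$ confines $[\sigma]$ to a finite set. The reason is that $\det\iota$ is a positive combination of the fundamental characters corresponding to the roots in $\fg/\fp$, so a lower bound on the dominant pairings together with an upper bound on the $\det\iota$ pairing cuts out a bounded region. Fixing $[\sigma]$ fixes the degree of $\sigma^*$ of a relatively ample line bundle, since such a line bundle is of the form $\cO(\chi)$ twisted by a pullback from $B \times X$. This bounds the Hilbert polynomials, so $\mathrm{Sec}_{\le s}$ is of finite type, and in fact projective over $B$.

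\textbf{Step 3 (closedness).} I then show that $\mathrm{Sec}_{\le s} \to B$ is proper, giving a closed image. The Hilbert scheme with fixed polynomial is projective over $B$, so the issue is the locus of graphs of sections, which is only open. I will check the valuative criterion directly. Let $R$ be a DVR, let $b \colon \mathrm{Spec}\, R \to B$, and let $\sigma_K$ be a reduction over the generic point. Since $\cE/P$ is proper over $B \times X$, the section $\sigma_K$ extends to a rational section over $X_R$ that is defined away from finitely many points of the special fibre, which has codimension two. The limit of the graph in the Hilbert scheme is the graph of a section $\sigma_0$ plus vertical components in fibres of $\pi$. Removing the vertical components only lowers $\deg N_\sigma$. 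This uses the fact that $\det T_\pi$ is relatively ample on $G/P$ (anticanonical), so vertical rational curves have positive $\det T_\pi$-degree. Hence $s_P(\cE_0) \le \deg \sigma_0^* T_\pi \le s$.

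This degeneration argument is the step I expect to be hardest: making rigorous that the limit cycle decomposes as a section plus vertical curves, and getting the sign right via the positivity of $-K_{G/P}$. It yields closedness of (\ref{Bs}) without even needing the finiteness in Step 2, using only that the image is constructible. Lower semicontinuity follows immediately.
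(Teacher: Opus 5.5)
Your proof is correct in substance. It uses the same two geometric inputs as the paper: reductions of the $\cE_b$ are points of the relative Hilbert scheme of $\cE/P$ over $B$, and a degeneration of sections is a section plus curves in fibres of $\pi$, on which $\det T_\pi$ has positive degree because $-K_{G/P}$ is ample (the paper phrases this as dominance of $\det\iota$ plus Kempf). The bookkeeping differs in two places.

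\emph{Finiteness.} The paper only proves a uniform lower bound on $s_P(\cE_b)$, namely that quotients of $\cE_b(\fg)$ of fixed rank have degree bounded below. You instead bound the whole numerical type $[\sigma]$ via dominant characters. You are right that a bound on $\deg\sigma^*T_\pi$ does not bound Hilbert polynomials for an arbitrary polarisation, a point the paper leaves implicit. There is a cheaper fix than your Step 2: polarise by $\det T_\pi\otimes p_X^*\cO_X(mx_0)$ with $m\gg0$, which is relatively ample over $B$ since $\det T_\pi$ is $\pi$-ample. Then the Hilbert polynomial of $\sigma(X)$ is $t\mapsto t(\deg\sigma^*T_\pi+m)+1-g$, so the paper's lower bound already yields finitely many Hilbert polynomials.

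\emph{Closedness.} The paper takes whole components of Hilbert schemes with fixed polynomial, which are proper over $B$, and shows every point of such a component is a section plus vertical curves; the set is then a finite union of closed images. You use Chevalley plus stability under specialisation, which only needs limits along a DVR. That step is easier than you expect. The closure $\overline{\Gamma}$ of $\sigma_K(X_K)$ in $(\cE/P)\times_B\mathrm{Spec}\,R$ is integral and flat over $R$. It maps isomorphically onto $X_R$ away from finitely many closed points of the special fibre. So its special fibre has cycle $[\sigma_0(X)]+\sum m_i[D_i]$ with the $D_i$ vertical, and constancy of degree in flat families gives $\deg\sigma_K^*T_\pi=\deg\sigma_0^*T_\pi+\sum m_i\deg(\det T_\pi|_{D_i})$.

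Some claims in your write-up are false as stated, though the argument survives them.
\begin{itemize}
\item The map $\mathrm{Sec}_{\le s}\to B$ is neither proper nor projective; it is open in a projective $B$-scheme. The $R$-point $\sigma_K$ generally does not extend, because its flat limit is a section plus vertical curves and so lies outside $\mathrm{Sec}$. What your DVR argument actually shows is that the image contains the specialisation, via a \emph{different} point $\sigma_0$ of $\mathrm{Sec}_{\le s}$. That is all you need.
\item Your closing claim that Step 2 can be dropped is wrong. Chevalley requires $\mathrm{Sec}_{\le s}$ to be of finite type over $B$. Without a finiteness input, the image is only a countable union of constructible sets, which can be stable under specialisation without being closed.
\item In Step 1, local constancy of $\deg\sigma^*T_\pi$ on $\mathrm{Sec}$ comes from flatness of the universal graph applied to $\det T_\pi$, not from the Hilbert polynomial. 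For a general polarisation, the Hilbert polynomial records only $\deg\sigma^*\cO(1)$.
\end{itemize}
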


\begin{proof}
The associated bundle $\pi \colon \cE/P \to B \times X$ is a family of $G/P$-fibrations. As in \cite[Lemma 2.1]{HN}, we consider the family of associated vector bundles $\cE ( \fg )$ over $B \times X$. If $\sigma \colon \{ b \} \times X \to \cE_b / P$ is a reduction for some $b \in B$, then by (\ref{AdjSeq}) there is a surjective map
\[
\cE_b ( \fg ) \ \to \ \left( \cE_b \right)_P ( \fg/\fp ) \ \cong \ \sigma^* T_{\pi_b}
\]
of bundles on $X$. Now $\sigma^* T_{\pi_b}$ has rank $\dim G - \dim P$. Since $B$ is bounded, there is a constant $k$ such that for all $b \in B$, all quotients $W$ of $\cE_b ( \fg )$ of this rank satisfy $\deg W \ge k$. In particular there is a uniform lower bound for $s_P ( \cE_b )$ on all of $B$. Therefore, there are at most finitely many $s' \le s$ such that $s_P ( \cE_b ) = s'$ for some $b \in B$.

For any such $s'$, suppose for some $b \in B$ that $\sigma \colon \{ b \}\times X \to \cE_b/P$ is a reduction of structure group with $\deg \sigma^* T_{\pi_b} = s'$. Denote by $q$ the Hilbert polynomial of $\sigma (X)$ in $\cE_b / P$ with respect to some polarisation of $\cE_b / P$. Consider the relative Hilbert scheme
\[
\cH ilb^q \left( ( \cE / P ) / B \right) \ \to \ B
\]
whose fibre at $b \in B$ is $\Hilb^q ( \cE_b / P )$. Write $\cH$ for the component containing the Hilbert point of $\sigma (X)$, which is also a scheme over $B$.

Let $\cX \to \cH \times \cE/P$ be the universal curve. We claim that for any $h \in \cH$, the scheme $\cX_h$ is a curve
\begin{equation} \label{reducible}
D \cup D_1 \cup \cdots \cup D_k
\end{equation}
in $\cE_{b'} / P$ for some $b' \in B$, where $D = \tau(X)$ for some section $\tau$ of $\cE_{b'} / P$ satisfying $\deg (\tau^* T_{\pi_{b'}} ) \le s'$, and $D_1 , \ldots , D_k$ are curves lying in fibres of $\cE_{b'} / P \to X$. To see this, we use the same argument as in the proof of \cite[Lemma 2.2]{HN}. We have a diagram
\[ \xymatrix{
 \cX \ar[r]^-\rho & \cH \times \cE / P \ar[r]^-{p_1} \ar[d]^{p_2} & \cH \ar[dr] & \\
 & \cE / P \ar[r]^\pi & B \times X \ar[r] \ar[d]_{p_X} & B \\
 & & X & .
} \]
Now $\rho^* p_2^* T_\pi$ is locally free and hence flat over $\cX$ by \cite[III.9.2 (e)]{Har}. By the general theory of Hilbert schemes, $\cX \to \cH$ is a flat family of curves. Hence $\rho^* p_2^* T_\pi$ is also flat over $\cH$ by \cite[III.9.2 (c)]{Har}. Therefore, $\chi ( \cX_h , (\rho^* p_2^* T_\pi)|_{\cX_h} )$ is independent of $h \in \cH$. Similarly, $\chi ( \cX_h , \cO_{\cX_h} ) = \chi ( X, \Ox )$ for all $h \in \cH$, and $\chi ( \cX_h , \rho^* p_2^* \pi^* p_X^* L ) = \chi ( X, L )$ for any line bundle $L$ of degree $1$ over $X$. By \cite[Lemma 2.2]{HN}, for any $h \in \cH$, the curve $\cX_h$ has the form stated in (\ref{reducible}).

Now as noted in the proof of \cite[Lemma 2.1]{Rth1}, the determinant of the isotropy representation (Definition \ref{IsotropyRepDefn}) is a dominant character. Thus if $\cX_h$ has components of the form $D_i$ in (\ref{reducible}) the line bundle $\det ( T_{\pi_{b'}}|_{D_i} )$ is ample for each $i$ by the Kempf vanishing theorem \cite{Ha80}, and thus of positive degree. Therefore,
\[
s_P ( \cE_{b'} ) \ \le \ \deg ( T_{\pi_{b'}}|_D ) \ \le \ \deg ( T_{\pi_{b'}}|_{\cX_h} ) \ \le \ s' \ \le \ s
\]
for all $h \in \cH$.

It follows that the locus (\ref{Bs}) is precisely the union of the images of all components of the form $\cH$ as $s'$ ranges over the finitely many values discussed at the beginning. As each of the Hilbert schemes in question is proper over $B$, the image of each $\cH$ is a closed subset. Thus (\ref{Bs}) is a union of finitely many closed subsets of $B$, so is closed. This completes the proof. \end{proof}

As before, let $\MGd$ be the moduli space of stable $G$-bundles of topological type $\delta$ and define the subsets
\begin{equation}\label{eq-strata}
\Ms \ := \ \left\{ E \in \MGd : s_P (E) \le s \right\}
\end{equation}
of those stable $G$-bundles whose Segre invariant relative to $P$ is less than or equal to $s$. Also, one can consider the loci
\begin{equation}\label{eq-strata-circ}
\Mso \ := \ \{ E \in \MGd : s_P (E) = s \}
\end{equation}
which, by Proposition \ref{semicont}, are locally closed. We will omit $\delta$ in the notation of (\ref{eq-strata}) and (\ref{eq-strata-circ}) when $\pi_1(G)$ is trivial. The following is immediate from Proposition \ref{semicont} and the functorial properties of the moduli of principal bundles.

\begin{corollary}\label{cor-stratification}
For each $\delta \in \pi_1 (G)$, the loci $\Mso$ define a stratification of the moduli space $\MGd$ into disjoint locally closed subsets.
\end{corollary}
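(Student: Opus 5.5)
The plan is to deduce the corollary from Theorem \ref{semicont}, applied to a family that realises the moduli space, and then to check that the resulting loci are disjoint, locally closed and cover $\MGd$.

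\textbf{Step 1: a parametrising family.} By the construction of $\MGd$ in \cite{Rth2}, there is a scheme $R$ of finite type with a family $\cE \to R \times X$ of stable $G$-bundles of type $\delta$. The classifying morphism $q \colon R \to \MGd$ is a good (geometric on the stable locus) quotient, so it is surjective and submersive: a subset $Z \subseteq \MGd$ is closed if and only if $q^{-1}(Z)$ is closed. One could use a Quot-type scheme of $G$-bundles with level structure, or the parameter space of the GIT construction.

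\textbf{Step 2: transfer closedness.} The value $s_P(\cE_r)$ depends only on the isomorphism class of $\cE_r$, so $q^{-1}\Ms = \{ r \in R : s_P(\cE_r) \le s\}$. Theorem \ref{semicont} says this set is closed in $R$, and hence $\Ms$ is closed in $\MGd$ for every $s$. I expect this to be the only point needing care. If one prefers not to invoke submersiveness of $q$, one can argue instead with the universal property: $q$ is a categorical quotient of an $R$ by a reductive group $\Gamma$ acting on it, and a $\Gamma$-invariant closed subset of $R$ has closed image under a good quotient.

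\textbf{Step 3: local closedness and the stratification.} We have
\[
\Mso \ = \ \Ms \setminus M_X(G,\delta;P,s-1),
\]
which is a closed set minus a closed set, and so is locally closed. The loci $\Mso$ are pairwise disjoint because each $E$ has a single value $s_P(E)$. They cover $\MGd$ because $s_P(E)$ is a finite integer for every $E$: reductions to $P$ exist, since any $G$-bundle on a curve reduces to a Borel subgroup, so $s_P(E) < \infty$; and $s_P(E) > -\infty$ by the quotient-bound argument in the proof of Theorem \ref{semicont}.

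By the boundedness part of that proof applied to $R$, only finitely many values of $s$ occur. The closure of each stratum is therefore contained in the union of that stratum and strata of smaller $s$. This gives the stratification.
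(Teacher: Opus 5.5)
Your proposal is correct and follows the paper's route. The paper simply declares the corollary immediate from Theorem \ref{semicont} and the functorial properties of the moduli space; your Steps 1--3 (pulling back to a parametrising family, using submersiveness of the quotient, and writing $\Mso$ as a difference of two closed sets) are exactly the details it leaves implicit. One small imprecision: the proof of Theorem \ref{semicont} gives only a uniform lower bound on $s_P$, so your claim that finitely many values occur also needs the upper bound $s_P(E) \le g \cdot \dim (G/P)$ from \cite[Theorem 1.1]{HN} — though finiteness is not actually required for the statement.
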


\subsection{Invariance properties of \texorpdfstring{$s_P$}{s\_P}} \label{invariance}

Let $V \to X$ be a vector bundle of rank $r$ and let $L \to X$ be any line bundle. It is easy to check that for $1 \le n \le r-1$ we have
\[
s_n (V \otimes L) \ = \ s_n (V)
\]
for all line bundles $L \to X$. 
 In particular, $s_n (V)$ depends only on the projective bundle $\PP V$; and this is an associated fibration of $E (\PGL_r)$, the principal $\PGL_r$-bundle obtained by extension of structure group via the adjoint map $\GL_r \to \PGL_r = \GL_r / Z(\GL_r )$. We will now generalise this invariance property of $s_n$, comparing the functions $s_P$ and $s_{\phi(P)}$ for a surjective homomorphism $\phi$. Firstly, we require a technical lemma. For an algebraic group $K$, we denote by $K_0$ the connected component containing the identity element.

\begin{lemma} \label{parprops}
Let $\phi \colon G \to H$ be a surjective homomorphism of connected algebraic groups.
\begin{enumerate}
\item[(a)] Let $Q$ be a parabolic subgroup of $H$. Then there is an isomorphism of $H$-spaces $G/\phi^{-1}(Q) \xrightarrow{\sim} H/Q$. In particular, $\phi^{-1}(Q)$ is parabolic in $G$.
\item[(b)] Let $P \subset G$ be a parabolic subgroup containing $\Ker (\phi)_0$. Then there is an isomorphism of $H$-spaces $G/P \xrightarrow{\sim} H/\phi(P)$.
\end{enumerate}
\end{lemma}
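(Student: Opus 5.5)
For (a), I will use the map $G \to H \to H/Q$, $g \mapsto \phi(g)Q$. Since $\phi$ is surjective, this map is surjective, and it is $G$-equivariant when $G$ acts on $H/Q$ through $\phi$. The stabiliser of the base point $eQ$ is exactly $\phi^{-1}(Q)$, so we get a bijective $G$-equivariant morphism $G/\phi^{-1}(Q) \to H/Q$.

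To see that it is an isomorphism, I will note that $H/Q \cong (G/\Ker\phi)/Q$. Equivalently, $H \cong G/\Ker\phi$ as algebraic groups: $\phi$ is a surjective homomorphism of algebraic groups over $\C$, hence separable in characteristic zero, so the induced map $G/\Ker\phi \to H$ is an isomorphism. Then $\phi^{-1}(Q)$ is the preimage of $Q$ under the quotient map, and the third isomorphism theorem for homogeneous spaces in characteristic zero gives $G/\phi^{-1}(Q) \cong (G/\Ker\phi)/(\phi^{-1}(Q)/\Ker\phi) \cong H/Q$. Alternatively, a bijective equivariant morphism between homogeneous spaces in characteristic zero is an isomorphism by Zariski's main theorem, since $H/Q$ is smooth and hence normal. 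This isomorphism intertwines the $G$-action with the $H$-action through $\phi$, which is what "isomorphism of $H$-spaces" means here. Since $H/Q$ is complete, $G/\phi^{-1}(Q)$ is complete, so $\phi^{-1}(Q)$ is parabolic.

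For (b), the plan is to reduce to (a) by showing that $Q := \phi(P)$ is parabolic in $H$ and that $\phi^{-1}(\phi(P)) = P$. Parabolicity is easy: $\phi(P)$ is closed, being the image of an algebraic group under a homomorphism, and it contains $\phi(B)$ for a Borel $B \subset P$. Moreover $H/\phi(P)$ is the image of the complete variety $G/P$ under the surjection $gP \mapsto \phi(g)\phi(P)$, so it is complete.

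Next, note $\phi^{-1}(\phi(P)) = P \cdot \Ker\phi$. The main obstacle is showing $\Ker\phi \subseteq P$ when only $\Ker(\phi)_0 \subseteq P$ is assumed. I will use the fact that $P\cdot \Ker\phi$ is a closed subgroup containing $P$, hence parabolic. Its identity component is $P\,\Ker(\phi)_0 = P$, because $P$ is connected and contains $\Ker(\phi)_0$, and the component group is finite. Parabolic subgroups are connected, being self-normalising, so by the standard theorem a closed subgroup containing a Borel is connected. Therefore $P \cdot \Ker\phi = P$, so $\phi^{-1}(\phi(P)) = P$, and (a) applied to $Q = \phi(P)$ yields $G/P \cong H/\phi(P)$ as $H$-spaces.
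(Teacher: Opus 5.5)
Your proof is correct and follows essentially the same route as the paper: in (a) you use the same map $g\,\phi^{-1}(Q) \mapsto \phi(g)Q$, and you also justify, via characteristic zero, that this bijection is an isomorphism, a step the paper leaves to ``unwinding definitions''; in (b) you likewise reduce to (a) by showing that $\phi(P)$ is parabolic and that $\phi^{-1}(\phi(P)) = P$, using connectedness of parabolic subgroups. The only cosmetic difference is that the paper gets finiteness of $\phi^{-1}(\phi(P))/P$ from a dimension count, whereas you write $\phi^{-1}(\phi(P)) = P\cdot\Ker\phi$ and identify $P$ as its identity component.
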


\begin{proof}
(a) Write $Q' := \phi^{-1}(Q)$. Using surjectivity of $\phi$ and normality of $\Ker (\phi)$, one checks that $H$ acts on $G/Q'$ by
\[ ( h , (g \cdot Q') ) \ \mapsto \ \tilde{h} g \cdot Q' \]
for any lift $\tilde{h}$ of $h$; and that there is a well defined map $G/Q' \to H/Q$ given by $g \cdot Q' \mapsto \phi(g) \cdot Q$. Unwinding definitions, this can be checked to define the desired isomorphism of $H$-spaces.

(b) By \cite[Corollary 6.2.8]{Spr}, the image $\phi(P)$ is a parabolic subgroup of $H$. Therefore, part (a) shows that $P' := \phi^{-1}(\phi(P))$ is a parabolic subgroup. Thus (b) will follow if we can show that $P' = P$.

Since $\Ker (\phi)_0 \subseteq P$, we have $\dim P' = \dim P$, so the coset space $P'/P$ is a finite set. Since $G$ is connected and $P'$ is a parabolic subgroup, $P'$ is connected by \cite[Corollary 6.4.10]{Spr}. Therefore, $P'/P$ is a single point and $P' = P$, as desired.
\end{proof}

\begin{remark} \label{Kerphicontained}
The proof of (b) shows that if $\Ker ( \phi )_0 \subseteq P$, then in fact all components of $\Ker ( \phi )$ belong to $P$, since $\Ker (\phi) \subseteq \phi^{-1} (J)$ for any subgroup $J \subseteq H$. Thus in this case we also have $\phi^{-1} (\phi (P)) = P$.
\end{remark}

\begin{example}
Suppose that $G = \GL_r$ and $H = \PGL_r$ and $\phi \colon \GL_r \to \PGL_r$ is the adjoint map. Let $P \subset \GL_r$ be the maximal parabolic subgroup stabilising a subspace $\C^n \subset \C^r$. Then the isomorphism $G/P \isom H/\phi(P)$ in Lemma \ref{parprops} (b) is the natural identification between the Grassmannian variety parameterising $n$-planes in $\C^r$ and that parameterising projective subspaces of dimension $n - 1$ in $\PP^{r-1}$.
\end{example}

We can now state the main invariance property of $s_P$.

\begin{proposition} \label{sPinv}
Let $\phi \colon G \to H$ be a surjective homomorphism of connected algebraic groups, and let $P \subset G$ be a parabolic subgroup containing $\Ker(\phi)_0$.
\begin{enumerate}
\item[(a)] Let $E \to X$ be a principal $G$-bundle. Then $s_P (E) = s_{\phi(P)}( E(H) )$.
\item[(b)] Let $\cE \to B \times X$ be a family of $G$-bundles. Then
\[
\{ b \in B : s_P ( \cE_b ) = s \} \ = \ \{ b \in B : s_{\phi(P)} ( \cE_b (H) ) = s \} .
\]
\end{enumerate}
\end{proposition}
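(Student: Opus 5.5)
The plan is to show that the two $\min$'s in Definition \ref{sPDefn} run over the same set of reductions and the same numbers. We compare the fibrations $\pi \colon E/P \to X$ and $\pi_H \colon E(H)/\phi(P) \to X$ directly.

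\emph{Step 1: identify the fibrations.} By Lemma \ref{parprops} (b), the map $g P \mapsto \phi(g)\phi(P)$ is a $G$-equivariant isomorphism $G/P \isom H/\phi(P)$, where $G$ acts on the right-hand side through $\phi$. Forming associated bundles (\S \ref{AssocBundles}) gives an isomorphism of fibre bundles over $X$:
\[
E/P \ = \ E(G/P) \ \isom \ E\bigl(H/\phi(P)\bigr) \ \cong \ E(H)\bigl(H/\phi(P)\bigr) \ = \ E(H)/\phi(P).
\]
The middle isomorphism is the standard fact that $E(H)(F) \cong E(F)$ for any $H$-space $F$ regarded as a $G$-space via $\phi$. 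I would check this on transition functions: both sides are glued by $\phi(g_{\alpha\beta})$ acting on $F$. Call the composite $\Psi \colon E/P \isom E(H)/\phi(P)$; it satisfies $\pi_H \circ \Psi = \pi$.

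\emph{Step 2: transport sections and tangent bundles.} Since $\Psi$ commutes with the projections, $\sigma \mapsto \Psi \circ \sigma$ is a bijection between reductions of $E$ to $P$ and reductions of $E(H)$ to $\phi(P)$. Also $d\Psi$ carries $T_\pi$ isomorphically onto $\Psi^* T_{\pi_H}$, because it is an isomorphism of total spaces compatible with the projections to $X$. Hence $\sigma^* T_\pi \cong (\Psi\circ\sigma)^* T_{\pi_H}$, and in particular their degrees agree.

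As a cross-check via (\ref{sPadjoint}): $\phi$ induces $d\phi \colon \fg/\fp \to \fh/\phi_*\fp$, which is surjective, and injective because $\Ker d\phi = \mathrm{Lie}(\Ker\phi) \subseteq \fp$. This gives $E_{P,\sigma}(\fg/\fp) \cong E(H)_{\phi(P)}(\fh/\phi_*\fp)$. Taking minima over the bijection of reductions yields (a).

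\emph{Step 3: part (b).} This follows pointwise from (a) applied to each $\cE_b$, using $(\cE(H))_b = \cE_b(H)$.

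The main point needing care is Step 1. One must see that the isomorphism of Lemma \ref{parprops} (b) is $G$-equivariant, not merely $H$-equivariant, so that it globalises to associated bundles. This follows from the explicit formula $gP \mapsto \phi(g)\phi(P)$, together with Remark \ref{Kerphicontained}, which gives $\phi^{-1}(\phi(P)) = P$ and hence shows the map is well defined and injective.
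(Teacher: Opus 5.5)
Your proposal is correct and follows the paper's proof. You identify $E/P \cong E(G/P) \isom E(H)\bigl(H/\phi(P)\bigr)$ over $X$ via Lemma \ref{parprops} (b), transport sections together with their restricted vertical tangent bundles along this isomorphism, and deduce (b) pointwise from (a); your adjoint-bundle cross-check and your explicit verification of equivariance using $\phi^{-1}(\phi(P)) = P$ spell out details the paper leaves implicit.
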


\begin{proof}
(a) By Lemma \ref{parprops} (b) and {\S} \ref{AssocBundles}, there is an isomorphism of fibrations over $X$ in $H$-homogeneous spaces
\[
E/P \ \cong \ E (G/P) \ \isom \ E (H) \left( H/\phi(P) \right) .
\]
Thus there is a bijection (depending on $\phi$) between sections of $E/P$ and sections of $E (H) \left( H/\phi(P) \right)$ preserving the isomorphism class of the restricted vertical tangent bundles. Therefore,
\begin{multline*}
s_P (E) \ = \ \min \left\{ \deg \sigma^* T_{(E/P)/X} : \sigma \hbox{ a section of } E/P \to X \right\} \ = \\
 \min \left\{ \deg \tau^* T_{\left( E(H)/\phi(P) \right) /X} : \tau \hbox{ a section of } E(H) \left( H/\phi(P) \right) \to X \right\} \ = \ s_{\phi(P)} \left( E(H) \right) ,
\end{multline*}
as desired. Part (b) is immediate from (a).
\end{proof}

As a special case, we note that $s_P (E)$ is an invariant of the conjugacy class of $P$, proving Theorem \ref{thm:mainA} (ii). This slightly generalises \cite[Remark 2.1]{Rth1}.

\begin{corollary} \label{ConjInv}
Let $E \to X$ be a principal $G$-bundle. For any $h \in G$ there is a $G$-bundle $E'$ and an equivariant isomorphism $\iota \colon E \isom E'$ such that $s_P (E) = s_{hPh^{-1}} (E')$.
\end{corollary}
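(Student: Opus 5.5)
The plan is to deduce the statement from Proposition \ref{sPinv}, applied to a suitable surjective homomorphism. I would take $H = G$ and $\phi = c_h \colon G \to G$, $g \mapsto h g h^{-1}$, conjugation by $h$. This is an automorphism of algebraic groups, so it is surjective with trivial kernel; in particular $\Ker(\phi)_0 = \{1\}$ is contained in $P$. Proposition \ref{sPinv} (a) then gives
\[
s_P (E) \ = \ s_{hPh^{-1}} \bigl( E(G) \bigr) ,
\]
where $E(G) := E \times^{c_h} G$ is the extension of structure group along $c_h$. The remaining task is to set $E' := E(G)$ and exhibit the isomorphism $\iota \colon E \isom E'$.

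For the isomorphism, I would define $\iota(e) := [e, 1]$, or more conveniently the inverse map $E' \to E$, $[e, g] \mapsto e \cdot h^{-1} g h$. This is well defined because
\[
[e \cdot k, \, h k^{-1} h^{-1} g] \ \mapsto \ e k \cdot k^{-1} h^{-1} g h \ = \ e \cdot h^{-1} g h .
\]
It is bijective and commutes with the projections to $X$. It intertwines the right actions twisted by $c_h$: for $g' \in G$ we have $\iota(e \cdot g') = \iota(e) \cdot c_h(g')$. This is the sense in which $\iota$ is "equivariant".

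If one prefers $E'$ to carry the untwisted action, one can instead simply put $E' = E$ as varieties with the new action $e \ast g := e \cdot h^{-1} g h$. Then $\iota = \Iden$, and $e \mapsto e \cdot h^{-1}$ identifies $E/P$ with $E'/(hPh^{-1})$ fibrewise over $X$. This bijects sections and matches the vertical tangent bundles, exactly as in the proof of Proposition \ref{sPinv}.

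The only point needing care is this bookkeeping of the twisted action, i.e.\ checking that the map $E/P \to E'/hPh^{-1}$ is well defined and an isomorphism of $G/P \cong G/hPh^{-1}$-fibrations. That isomorphism is induced by right multiplication by $h^{-1}$ on $G$, which is the instance of Lemma \ref{parprops} (b) for $\phi = c_h$. Once that is verified, the equality of minima defining $s_P$ and $s_{hPh^{-1}}$ is immediate. I expect no real obstacle beyond this routine verification.
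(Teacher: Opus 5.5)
Your argument is the paper's: apply Proposition~\ref{sPinv}~(a) to the inner automorphism $c_h$, whose kernel is trivial, and take $E' := E(G)$, which the paper simply asserts is isomorphic to $E$. Two bookkeeping fixes are needed. First, your map $e \mapsto [e,1]$ is only $c_h$-twisted equivariant, whereas $e \mapsto [e,h]$ (with inverse $[e,g] \mapsto e\cdot h^{-1}g$) is an honest isomorphism of principal $G$-bundles, and this is what lets you read off $s_P(E) = s_{hPh^{-1}}(E)$, i.e.\ Theorem~\ref{thm:mainA}~(ii). Second, in your variant with action $e \ast g = e\cdot h^{-1}gh$, the $hPh^{-1}$-orbits are just the $P$-orbits $eP$, so the identification $E/P = E'/(hPh^{-1})$ is the identity map; the map $e\mapsto e\cdot h^{-1}$ does not descend in general, though it is the correct map $E/P \isom E/(hPh^{-1})$ for the untwisted bundle $E$.
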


\begin{proof}
Given an element $h \in G$, extension of structure group by the inner automorphism $g \mapsto h g h^{-1}$ gives a $G$-bundle $E' := E(hGh^{-1})$ isomorphic to $E$. By Proposition \ref{sPinv} (a), we have $s_P (E) \ = \ s_{hPh^{-1}} ( E (hGh^{-1}) ) = s_{hPh^{-1}}(E')$.
\end{proof}

\section{Segre stratifications of moduli spaces of principal \texorpdfstring{$G$}{G}-bundles}\label{sec:stratificacions}

We denote by $\MGd$ the moduli space of stable principal $G$-bundles of topological type $\delta$, and by $M_X(G)$ when $\pi_1(G)$ is trivial and there is no choice for $\delta$ other than the trivial topological type. From Corollary \ref{cor-stratification}, the moduli $\MGd$ stratifies as a disjoint union of the locally closed subsets 
\[\Mso \ := \ \{ E \in \MGd : s_P (E) = s \}\; .\]
We are interested in answering questions on nonemptiness, irreducibility and dimension of the strata $\Mso$ together with, for a given $s$, which strata $M_X (G, \delta; P, s')$ lie in the closure $\overline{\Mso}$. For classical $G$ with maximal parabolics $P$, we can refer to the literature:

Set $G = \GL_r$ and recall from (\ref{Segrestratum-vectorbundles}) the Segre stratum $U_X (r, d; n, s)$ for vector bundles. The following result can be found in \cite{RT,BL}.

\begin{theorem}\cite[Theorem 0.1 and Theorem 0.2]{RT}; \cite[Theorem 4.2]{BL}.\label{GLrCase}
Suppose that $0 < s \le n(r-n)(g-1) + (n-1)$ and $s \equiv nd \;(\!\!\mod r)$. Then the Segre stratum $U_X (r, d; n, s)$ is irreducible of dimension $(r^2 - n(r-n))(g-1) + s + 1$. Moreover, $U_X (r, d; n, s')$ belongs to the closure of $U_X (r, d; n, s)$ if $s' \equiv n d \;(\!\!\mod r)$ and $0 < s' \le s$.
\end{theorem}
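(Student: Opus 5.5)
The plan is to parametrise the stratum by extensions. Set $e := (nd - s)/r$, an integer since $s \equiv nd \pmod r$. By (\ref{sPsndef}), a bundle $V$ has $s_n(V) \le s$ exactly when it admits a rank $n$ subbundle $F$ of degree $e$ (after saturating a subbundle of larger degree and, if necessary, performing elementary transformations to lower its degree). Such a $V$ is an extension
\[ 0 \ \to \ F \ \to \ V \ \to \ Q \ \to \ 0 \]
with $\rank Q = r-n$ and $\deg Q = d-e$.

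\textbf{Dimension count.} Note that $\deg \Hom(Q,F) = n\,\deg F - (r-n)\deg Q = -s$. I would consider the family $\cF$ of such extensions with $F$ and $Q$ stable, which is a projective bundle with fibre $\PP H^1(X, \Hom(Q,F))$ over $U_X(n,e) \times U_X(r-n,d-e)$. Since $s > 0$, $\Hom(Q,F)$ is stable of negative degree, so $h^0 = 0$, and Riemann--Roch gives fibre dimension $n(r-n)(g-1) + s - 1$. Hence
\[ \dim \cF \ = \ (n^2 + (r-n)^2 + n(r-n))(g-1) + s + 1 \ = \ (r^2 - n(r-n))(g-1) + s + 1 , \]
and $\cF$ is irreducible.

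\textbf{Main steps.}
\begin{enumerate}
\item Show that a general extension in $\cF$ is stable with $s_n(V) = s$ exactly. Following Lange--Narasimhan, I would bound the dimension of the locus of extensions in which some $F' \ne F$ of degree $\ge e$ lifts, or in which $V$ is destabilised. Via the secant varieties of the image of $\PP(\Hom(Q,F)^*)$-type scrolls in $\PP H^1(\Hom(Q,F))$, the hypothesis $s \le n(r-n)(g-1) + (n-1)$ is what makes these loci proper.
\item Show that the classifying map $\cF \dashrightarrow U_X(r,d)$ is generically finite. This follows once a general $V$ has only finitely many maximal subbundles of rank $n$, which holds in the same range by a tangent-space computation on the Quot scheme: the obstruction space $H^1(\Hom(F,Q))$ and the tangent space $H^0(\Hom(F,Q))$ have expected dimension $\chi = s - n(r-n)(g-1) \le 0$ plus a correction.
\item Prove irreducibility. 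Here I must show that the extensions with $F$ or $Q$ non-stable sweep out loci of strictly smaller dimension, so they cannot form extra components. This is done by stratifying $F$ and $Q$ by Harder--Narasimhan type, where $h^1$ grows by at most the drop in the dimension of the moduli of $F,Q$.
\end{enumerate}

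\textbf{Closure statement.} By Theorem \ref{semicont}, the set $Z_s := \{ s_n \le s \}$ is closed, and the strata $U_X(r,d;n,s')$ for $s' \le s$ lie in $Z_s$. I would show that every component of $Z_s$ has dimension at least the expected $(r^2 - n(r-n))(g-1) + s + 1$. For this, consider the relative Quot scheme of rank $r-n$, degree $d-e$ quotients over a local universal family. Each of its components has dimension at least $\dim U_X(r,d) + \chi(\Hom(F,Q)) = \dim U_X(r,d) + n(r-n)(1-g) + s$, and its image has at least this dimension wherever the fibres are finite or small. Since $\dim U_X(r,d;n,s') < \dim U_X(r,d;n,s)$ for $s' < s$, no stratum $U_X(r,d;n,s')$ can be a component of $Z_s$. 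Combined with irreducibility, this gives $U_X(r,d;n,s') \subset \overline{U_X(r,d;n,s)}$.

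\textbf{Main obstacle.} I expect the hardest step to be controlling the fibres of the Quot-scheme map over the smaller strata, where bundles can have positive-dimensional families of maximal subbundles. I would first try to bound those fibre dimensions by $h^0(\Hom(F,Q))$ and compare with the excess $s - s'$. An alternative is a direct argument by elementary transformations: deform a bundle with a degree $e' > e$ subbundle $F'$ to one whose maximal subbundle is a general elementary transformation $F \subset F'$ of degree $e$, which is the route taken in \cite{RT}.
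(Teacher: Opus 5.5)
The paper gives no proof of this theorem; it quotes it from \cite{RT} and \cite{BL}, so your outline has to stand on its own. Your architecture is the standard one: extensions $0\to F\to V\to Q\to 0$ with $\deg F=e$, a dimension count, generic finiteness, then irreducibility and closure. But the step carrying the difficulty is only asserted.

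\textbf{Step 1.} Stability of a general such extension is Lange's conjecture, the main theorem of \cite{RT}. The secant-variety criterion controls when a subsheaf of $Q$ lifts to $V$; Lange--Narasimhan use it for rank two. Here, however, a destabilising $W\subset V$ may have any rank, meeting $F$ in a subsheaf $W_1$ and mapping onto a subsheaf $W_2\subseteq Q$ of any rank and degree. Bounding all these lifting loci needs dimension estimates for Quot schemes of $F$ and $Q$. Saying ``the hypothesis on $s$ is what makes these loci proper'' restates the theorem rather than proving it, so nonemptiness is not established.

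\textbf{Step 2.} This is wrong as written. The quantity $\chi(\Hom(F,Q))=s-n(r-n)(g-1)$ is \emph{positive} when $n(r-n)(g-1)<s\le n(r-n)(g-1)+(n-1)$. The stated range allows such $s$ for $n\ge2$, e.g.\ $r=3$, $n=2$, $g=2$, $d=0$, $s=3$. There the asserted dimension $(r^2-n(r-n))(g-1)+s+1$ exceeds $\dim U_X(r,d)=r^2(g-1)+1$, so no generic-finiteness argument can work. The statement as quoted can only hold for $s\le n(r-n)(g-1)$. In that range the input you need is $h^0(X,F^*\otimes Q)=0$ for general stable $F$ and $Q$, which is Hirschowitz's lemma, not an ``expected dimension plus a correction''.

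\textbf{Irreducibility and closure.} These steps have a separate hole. The claim that smaller-dimensional families ``cannot form extra components'' needs a lower bound on the dimension of every component. Your Quot-scheme bound only descends to $Z_s=\{s_n\le s\}$ if you control fibres over the smaller strata, which you leave open. Also, $Z_s$ is the locus of $V$ with a rank $n$, degree $e$ \emph{subsheaf}: elementary transformations of a subbundle are not subbundles, so $Z_s$ is not the image of your extension family.

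The remedy is to compare dimensions upstairs, counting pairs $(V,F)$ rather than bundles. Put $D_t:=(r^2-n(r-n))(g-1)+t+1$, and let $\mathcal{Q}$ be the relative Quot scheme of rank $n$, degree $e$ subsheaves. Then:
\begin{enumerate}
\item[(i)] Every component of $\mathcal{Q}$ has dimension at least $\dim U_X(r,d)+\chi(\Hom(F,Q))=D_s$.
\item[(ii)] Consider pairs whose saturation $F''\supset F$ has degree $e+m$ with $m\ge1$. They fibre over pairs $(V,F'')$ with $F''$ a subbundle, a locus of dimension at most $D_{s-rm}$. The fibres are the $nm$-dimensional families of colength $m$ subsheaves of $F''$. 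So these pairs fill at most $D_s-(r-n)m<D_s$ dimensions.
\item[(iii)] Hence $\mathcal{Q}$ is the closure of its saturated part. That part is open in the space of \emph{all} extensions with $\deg F=e$ (arbitrary $F$ and $Q$, not only stable ones), i.e.\ the stack of $P$-bundles of type $(e,d-e)$. This stack is smooth and connected, since it maps smoothly onto $\mathrm{Bun}_n\times\mathrm{Bun}_{r-n}$ with fibres $[\mathrm{Ext}^1(Q,F)/\Hom(Q,F)]$. It is therefore irreducible, and so is $Z_s$.
\end{enumerate}

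Now grant nonemptiness of $Z_s$ from \cite{RT}, and Hirschowitz's lemma, which gives $\dim Z_s=D_s>D_{s-r}\ge\dim Z_{s-r}$. Then $U_X(r,d;n,s)=Z_s\setminus Z_{s-r}$ is nonempty, irreducible and dense in $Z_s$. It follows that $U_X(r,d;n,s')\subseteq Z_s=\overline{U_X(r,d;n,s)}$ for $s'\le s$. This route needs no fibre control and no separate treatment of non-stable $F$ and $Q$.
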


Let $P \subset \GL_r$ be the stabiliser of a subspace of dimension $n < r$. Identifying $\delta \in \pi_1 (\GL_r) = \bZ$ with the degree of the associated vector bundles (Definition \ref{defn:degree}), it follows from (\ref{sPsndef}) that the definition of the Segre function $s_P$ for principal $\GL_n$-bundles coincides with the Segre function $s_n$ for the associated vector bundles, therefore $M_X ( \GL_r, \delta; P, s )^\circ$ is nonempty only if $s \equiv n \delta \;(\!\!\mod r)$; and that $M_X ( \GL_r, \delta; P, s' )$ lies in the closure of $M_X ( \GL_r, \delta; P, s )^\circ$ if $s' \equiv n \delta \;(\!\!\mod r)$ and $0 < s' \le s$.

For a different example, set $G = \SO_{2n}$. We have $\pi_1 ( \SO_{2n} ) = \bZ_2$. Thus $M_X ( \SO_{2n} )$ has two components, distinguished by the Stiefel--Whitney class $w_2 \in H^2 (X, \bZ_2 ) = \bZ_2$. Let $Q$ be the maximal parabolic stabilising an isotropic subspace $\Lambda \subset \C^{2n}$ of dimension $n$. By Lie theory, $\fg / \fq \cong \wedge^2\Lambda^*$, the tangent space of the Lagrangian Grassmannian $\OG (n, \C^{2n} )$ at $\Lambda$; and for a reduction $\sigma \colon X \to E/Q$, we have $E_{Q, \sigma} ( \fg/\fq ) \cong \wedge^2 F^*$, where $F \subset E ( \C^{2n} )$ is the isotropic subbundle corresponding to the reduction. Thus
\begin{equation}\label{sP-symp-def}
s_Q (E) \ = \ \min\{ -(n-1) \cdot \deg F : F \subset E ( \C^{2n} ) \hbox{ an isotropic subbundle of rank } n \} .
\end{equation}
which coincides with (\ref{sn-symp-def}) up to rescaling.

Then \cite{CH2} gives the following analogue of Theorem \ref{GLrCase}, which we reformulate in terms of principal bundles:

\begin{theorem}\cite[Theorem 1.3]{CH2} \label{OrthStrat}
Suppose that
\begin{equation} \label{conditions-Spin}
0 \ < \ s \ \le \ \frac{1}{2}(n-1)n(g-1) + 2 \quad \hbox{and} \quad s \equiv 0 \;(\!\!\mod n-1) ,
\end{equation}
Then the image of $M_X (\SO_{2n}, \delta ; Q, s)^\circ$ in $M_X (\SL_{2n})$ is irreducible and of dimension \linebreak $\frac{1}{2}n(3n - 1)(g - 1) + s$.
\end{theorem}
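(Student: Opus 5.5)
The statement is cited from \cite[Theorem 1.3]{CH2}, so the plan is mainly to translate the orthogonal-bundle result into the language of principal $\SO_{2n}$-bundles and Segre functions $s_Q$. First, I will identify $M_X(\SO_{2n},\delta)$ with the moduli of stable orthogonal bundles $W = E(\C^{2n})$ of rank $2n$ and trivial determinant, with $w_2(W)$ corresponding to $\delta$. Extension of structure group along $\SO_{2n}\hookrightarrow \SL_{2n}$ gives the map to $M_X(\SL_{2n})$, whose image is the locus studied in \cite{CH2}.

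Second, I will match the invariants. By Example \ref{example:reductions} (c), reductions $\sigma\colon X\to E/Q$ correspond to rank $n$ isotropic subbundles $F\subset W$. The identification $\fg/\fq\cong\wedge^2\Lambda^*$ gives $E_{Q,\sigma}(\fg/\fq)\cong\wedge^2F^*$, so
\[
\deg E_{Q,\sigma}(\fg/\fq) = -(n-1)\deg F ,
\]
which yields (\ref{sP-symp-def}) via (\ref{sPadjoint}). Hence $s_Q(E) = \frac{n-1}{2}\, t(W)$, where $t(W)=\min\{-2\deg F\}$ is the invariant of \cite{CH2}. It follows that the stratum $M_X(\SO_{2n},\delta;Q,s)^\circ$ corresponds to the locus where $t(W)=\frac{2s}{n-1}$.

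Third, I will transport the hypotheses. The condition $s\equiv 0 \pmod{n-1}$ makes $-\deg F = s/(n-1)$ integral. The upper bound on $s$ is then the rescaled range in \cite[Theorem 1.3]{CH2}. The irreducibility and dimension statement for the image in $M_X(\SL_{2n})$ then follows directly from that theorem, after checking that the dimension formula in \cite{CH2} becomes $\frac12 n(3n-1)(g-1)+s$ once the invariant is expressed in terms of $s$.

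The main obstacle is bookkeeping: getting the normalisation constant between $t$ and $s_Q$ right, and checking that the parity and range conditions in \cite{CH2} translate exactly into (\ref{conditions-Spin}), including the handling of the two components indexed by $\delta$. I would verify this directly from the explicit formula $\deg\wedge^2F^*=-(n-1)\deg F$. If the rescaling of the range or of the dimension formula does not come out exactly as stated, the discrepancy is to be resolved by rechecking the precise form of \cite[Theorem 1.3]{CH2}, since that theorem is the only substantive input.
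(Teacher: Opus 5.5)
Your proposal follows the paper's own route: compute $E_{Q,\sigma}(\fg/\fq)\cong\wedge^2F^*$, so that $s_Q(E)$ is the minimum of $-(n-1)\deg F$ over reductions (a rescaling of $t$), and then import \cite[Theorem 1.3]{CH2}, which is the only substantive input.

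One piece of the bookkeeping needs more care than either you or the paper's sketch give it. $\SO_{2n}/Q$ is only one of the two components of $\OG(n,\C^{2n})$, so a fixed $Q$ sees maximal isotropic subbundles of one family only, and in general $\frac{n-1}{2}\,t(W)=\min\{s_{P_1}(E),s_{P_2}(E)\}$ rather than $s_Q(E)$; already for $n=2$ the two values are the Segre invariants of the two rank two factors of $W$. The match with \cite{CH2} therefore holds only after passing to images in $M_X(\SL_{2n})$, where conjugation by $\Or_{2n}\setminus\SO_{2n}$ swaps the two families as in the paper's later remark, and even then only up to pieces with smaller $t$.
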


Analogous statements for the symplectic and odd orthogonal cases are proven in \cite[Theorem 1.1]{CH2} and \cite[Theorem 5.1]{CH3} respectively.

In the remainder of this section, we will consider certain homomorphisms of algebraic groups $\phi \colon G \to H$ which interact well with stability of $G$- and $H$-bundles. This will allow us to transfer information on the stratification in the moduli space of principal $G$-bundles to the moduli of $H$-bundles, extending the literature to a number of other groups. 

\subsection{Maps between moduli spaces}

Let $\phi \colon G \rightarrow H$ be a surjective morphism of connected reductive algebraic groups. Here we will generalise some statements in \cite[{\S} 7]{Rth1}, to relate the stability of a principal $G$-bundle $E$ and that of $E(H)$. Firstly, we record an essential technical fact.

\begin{lemma} \label{PcontainsZ}
Let $G$ be a connected linear algebraic group, and let $P \subset G$ be a parabolic subgroup. Then $Z(G) = Z(P)$. (In particular, $Z(G) \subseteq P$.)
\end{lemma}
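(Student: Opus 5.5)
The plan is to prove the two inclusions $Z(G) \subseteq Z(P)$ and $Z(P) \subseteq Z(G)$ separately. The first reduces to showing that $Z(G) \subseteq P$. The second uses completeness of $G/P$ against the affineness of $G$. Only standard facts from \cite{Spr} are needed: every element of a connected linear algebraic group lies in some Borel subgroup, all Borel subgroups are conjugate, and $P$ contains a Borel subgroup $B$.

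\emph{Step 1: $Z(G) \subseteq P$.} Fix a Borel subgroup $B \subseteq P$ and let $z \in Z(G)$. Since every element of $G$ lies in a Borel subgroup, and all Borels are conjugate, we have $z \in gBg^{-1}$ for some $g \in G$. Then $g^{-1} z g \in B$. But $z$ is central, so $g^{-1} z g = z$, and hence $z \in B \subseteq P$. Once $Z(G) \subseteq P$ is known, every element of $Z(G)$ commutes with all of $P$, so $Z(G) \subseteq Z(P)$.

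\emph{Step 2: $Z(P) \subseteq Z(G)$.} Let $x \in Z(P)$ and consider the map $f \colon G \to G$, $g \mapsto g x g^{-1}$. For $p \in P$ we have $(gp) x (gp)^{-1} = g x g^{-1}$, because $x$ commutes with $p$. So $f$ is constant on left cosets of $P$ and descends to a morphism $\bar f \colon G/P \to G$, using that $G \to G/P$ is a quotient morphism.

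Now $G/P$ is a complete, connected and irreducible variety, and $G$ is affine, being a linear algebraic group. Any morphism from a connected complete variety to an affine variety is constant: composing with the coordinate functions gives regular functions on $G/P$, and these are constant. Hence $\bar f$ is constant, with value $\bar f(eP) = x$. So $g x g^{-1} = x$ for all $g \in G$, that is, $x \in Z(G)$.

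Combining the two steps gives $Z(G) = Z(P)$, and the parenthetical statement $Z(G) \subseteq P$ is Step 1.

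The only point needing care is the descent in Step 2. Since $f$ is constant on the fibres of $G \to G/P$, it factors through the categorical quotient, and this is where completeness of $G/P$ is used. I expect no real obstacle here, as this is essentially the standard argument that $Z(B) = Z(G)$, with $B$ replaced by $P$.
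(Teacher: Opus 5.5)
Your proof is correct, and its overall structure (two inclusions, the second via completeness of $G/P$ against affineness of $G$) matches the paper's.

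\textbf{Step 2.} This is essentially the paper's argument. The paper uses the commutator map $g \mapsto ygy^{-1}g^{-1}$ rather than your conjugation map $g \mapsto gxg^{-1}$. The two differ only by right multiplication by $x^{-1}$, and both factor through $G/P$ and are therefore constant.

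\textbf{Step 1.} Here you take a different route.
\begin{itemize}
\item The paper obtains $Z(G) \subseteq P$ from the self-normalising property $N_G(P) = P$ (\cite[Corollary 6.4.10]{Spr}). It simply observes that central elements normalise every subgroup.
\item You instead use that $G$ is the union of the conjugates of a Borel subgroup, together with conjugacy of Borel subgroups and conjugation-invariance of $Z(G)$.
\end{itemize}
Both are substantial structural theorems from the same part of Springer, so neither route is really more elementary. Yours shows directly that $Z(G)$ lies in every Borel subgroup, though the paper's argument gives this too by taking $P = B$. The paper's route has the small economy of reusing a fact it already needs, namely connectedness of parabolics in the proof of Lemma \ref{parprops}.

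\textbf{A correction to your last paragraph.} The descent of $f$ to $\bar f \colon G/P \to G$ comes from the universal property of the quotient morphism $G \to G/P$, not from completeness. Completeness is used only afterwards, to conclude that $\bar f$ is constant. This is a misattribution in the commentary, not a gap in the argument.
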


\begin{proof}
By Springer \cite[Corollary 6.4.10]{Spr}, we have $N_G (P) = P$. As $Z(G) \subseteq N_G (H)$ for all subgroups $H \subseteq G$, in particular $Z(G) \subseteq P$; whence $Z(G) \subseteq Z(P)$. For the converse, we observe that the argument of \cite[Corollary 6.2.9]{Spr} applies to an arbitrary parabolic: If $y \in Z(P)$ then the commutator map $g \mapsto y g y^{-1} g^{-1}$ defines a map $G \to G$ which factorises via $G/P$; for if $p \in P$, then
\[
y ( g p ) y^{-1} (g p)^{-1} \ = \ y g (p y^{-1} p^{-1} ) g^{-1} \ = \ y g (y^{-1} p p^{-1} ) g^{-1} \ = \ y g y^{-1} g^{-1} .
\]
But since $G/P$ is projective and $G$ is affine, the image of this map is constant, and thus equals the point $y \cdot 1_G \cdot y^{-1} \cdot 1_G^{-1} = 1_G$. Thus in fact $y \in Z(G)$.
\end{proof}

The main result of the section proves Theorem \ref{thm:mainB} and describes a special situation where all Segre functions $s_P$ are invariant under a surjective homomorphism, and so we obtain a morphism between certain components of the moduli spaces and a correspondence between the stratifications. In particular, this generalises \cite[Proposition 7.1]{Rth1} and \cite[Corollary 3.18]{Rth2}. Recall the locally closed strata $\Mso$ defined in (\ref{eq-strata-circ}). For each $\delta \in \pi_1 (G)$, define
\begin{equation}\label{sigma-strata}
\Sigma (G, \delta; P) \ := \ \{ s \in \bN : \Ms^\circ \hbox{ is nonempty} \} .
\end{equation}

\begin{theorem} \label{RelateStrata}
Let $\phi \colon G \to H$ be a surjective homomorphism of connected reductive algebraic groups satisfying $\Ker (\phi)_0 \subseteq Z(G)$. Let $E \to X$ be a principal $G$-bundle and let $E(H)$ be the associated principal $H$-bundle.
\begin{enumerate}
\renewcommand{\labelenumi}{(\alph{enumi})}
\item For \emph{all} parabolics $P \subset G$, we have $s_P (E) = s_{\phi(P)} (E(H))$.
\item $E$ is (semi)stable if and only if $E(H)$ is (semi)stable.
\item For each topological type $\delta \in \pi_1 (G)$, the map $\phi_\delta \colon M_X (G, \delta ) \ \to \ M_X (H, \phi_* \delta )$ induced by extension of structure group is a morphism.
\item We have $\phi_\delta^{-1} M_X (H, \phi_* \delta ; \phi (P), s)^\circ = M_X (G, \delta ; P, s)^\circ$.
\item If $\phi_\delta$ is surjective, then $\Sigma (G, \delta; P) = \Sigma (H, \phi_* \delta ; \phi (P) )$.
\end{enumerate}
\end{theorem}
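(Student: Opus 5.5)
The plan is to reduce everything to Proposition \ref{sPinv}, using Lemma \ref{PcontainsZ} to check its hypothesis. For (a), let $P \subset G$ be any parabolic. By Lemma \ref{PcontainsZ}, $Z(G) \subseteq P$. Since $\Ker(\phi)_0 \subseteq Z(G)$ by assumption, we get $\Ker(\phi)_0 \subseteq P$. Proposition \ref{sPinv} (a) then gives $s_P(E) = s_{\phi(P)}(E(H))$ directly.

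For (b), we need a bijection between maximal parabolics of $G$ and of $H$ via $P \mapsto \phi(P)$, with inverse $Q \mapsto \phi^{-1}(Q)$. First, $\phi(P)$ is parabolic by \cite[Corollary 6.2.8]{Spr}, and $\phi^{-1}(Q)$ is parabolic by Lemma \ref{parprops} (a). Next, $\phi^{-1}(\phi(P)) = P$ by Remark \ref{Kerphicontained}, and $\phi(\phi^{-1}(Q)) = Q$ by surjectivity. Both maps preserve inclusions, so maximal parabolics correspond to maximal parabolics. The remaining issue is that Definition \ref{stabilityGbundle} quantifies over every maximal parabolic, not just one per conjugacy class. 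But every parabolic of $H$ has the form $\phi(P)$, so the set of numbers $s_P(E)$ with $P$ maximal in $G$ equals the set of numbers $s_Q(E(H))$ with $Q$ maximal in $H$. Since (semi)stability is the condition that all of these are $> 0$ (resp.\ $\ge 0$), statement (b) follows from (a).

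For (c), we first note that $\phi_\delta$ is well defined on points: by (b), $E(H)$ is stable, and its topological type is $\phi_*\delta$ by the classification proposition. To see that $\phi_\delta$ is a morphism, I would use the universal property of the coarse moduli space. Extension of structure group is functorial in families, so a family $\cE \to B\times X$ of stable $G$-bundles gives a family $\cE(H)$ of stable $H$-bundles, and hence a classifying morphism $B \to M_X(H,\phi_*\delta)$. Applying this to the local (étale) families used in Ramanathan's construction \cite{Rth2}, these morphisms descend, as in \cite[Proposition 7.1]{Rth1}. I expect this to be the main obstacle, because $M_X(G,\delta)$ is only a coarse moduli space: one must check that the classifying maps are invariant under the group used in the GIT quotient, which holds because they depend only on isomorphism classes. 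I would cite \cite{Rth1, Rth2} for this step rather than reprove it.

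Finally, (d) is a pointwise restatement of (a): $E$ lies in the left-hand side exactly when $s_{\phi(P)}(E(H)) = s$, which by (a) holds exactly when $s_P(E) = s$. For (e), (d) gives $\Sigma(G,\delta;P) \subseteq \Sigma(H,\phi_*\delta;\phi(P))$, since a nonempty $G$-stratum maps into the corresponding $H$-stratum. Conversely, if $\phi_\delta$ is surjective and some $H$-stratum is nonempty, its preimage is nonempty, and by (d) that preimage is the $G$-stratum with the same $s$.
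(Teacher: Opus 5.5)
Your proposal is correct and follows essentially the same route as the paper. Part (a) comes from Lemma \ref{PcontainsZ} and Proposition \ref{sPinv}~(a); part (b) from the correspondence $P \mapsto \phi(P)$, $Q \mapsto \phi^{-1}(Q)$, via Lemma \ref{parprops}~(a) and Remark \ref{Kerphicontained}; and parts (d), (e) are pointwise consequences of (a). Two details you supply are only implicit in the paper's proof: you check that this correspondence is an inclusion-preserving bijection, so maximal parabolics match, and you give a descent argument through Ramanathan's construction for the morphism property in (c).
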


\begin{proof}
(a) By Lemma \ref{PcontainsZ}, for all parabolics $P \subset G$ we have $\Ker (\phi)_0 \subseteq P$. Thus $s_P (E) = s_{\phi(P)} (E(H))$ for all $P$ by Proposition \ref{sPinv} (a).

(b) Observe that, for each parabolic $P\subset G$, the image $\phi(P)$ is parabolic in $H$ by \cite[Corollary 6.2.8]{Spr}; and, for a parabolic subgroup $Q \subset H$, the inverse image $\phi^{-1} (Q)\subset G$ is parabolic in $P$ by Lemma \ref{parprops} (a). Also, recall that $E$ is stable if $s_P(E) > 0$ for every maximal parabolic $P\subset G$ and strictly semistable if $s_P(E)\geq 0$ for every maximal parabolic $P$ with equality for at least one $P$. 
Then the statement follows from Proposition \ref{sPinv} (a) and Remark \ref{Kerphicontained}, since
\[
s_Q (E(H)) \ = \ s_{\phi (\phi^{-1} (Q))} (E(H)) \ = \ s_{\phi^{-1} (Q)} (E) \quad \hbox{and} \quad s_P (E) \ = \ s_{\phi(P)}(E(H)) .
\]

(c) This follows directly from the ``only if'' of part (b).

(d) Suppose $F \in M_X (H, \phi_* \delta )$ is a principal $H$-bundle with $s_{\phi(P)} (F) = s$, and which is of the form $E(H)$ for some $E \in M_X (G, \delta)$; that is, lying in the image of $\phi_\delta$. By (a), we have $s_P (E) = s$. Conversely, if $s_P (E) = s$ for a $G$-bundle $E$ of topological type $\delta$, then $s_{\phi(P)} ( E(H) ) = s$ by the same argument.

(e) By (d), we have an inclusion $\Sigma (G, \delta; P) \subseteq \Sigma (H, \phi_* \delta ; \phi (P) )$ in any case. When $\phi_\delta$ is surjective, by part (a) we see that $\Mso$ is nonempty whenever $M_X (H, \phi_* \delta ; \phi (P) , s )^\circ$ is nonempty. Thus the inclusion is a bijection.
\end{proof}

\subsection{Examples}\label{sec:examples}

Here we give examples showing how the above invariance results allow us to transfer information between the Segre stratifications of $M_X (G, \delta)$ and $M_X (H, \phi_*\delta)$ when $\phi \colon G \to H$ is a quotient by a subgroup of $Z(G)$. The situations presented here essentially cover all essential situations of algebraic groups associated with a simple Lie algebra of Dynkin type A or C in $\S$\ref{GLrPGLr}, and types B or D in $\S$\ref{SpinSO}, ranging from the simply connected to the adjoint case. 

\subsubsection{\texorpdfstring{$\GL_r$}{GL\_r} and \texorpdfstring{$\PGL_r$}{PGL\_r}} \label{GLrPGLr}

Let $\phi \colon \GL_r \to \PGL_r $ be the adjoint map. In this case, \linebreak $\phi_* \colon \pi_1 (\GL_r) \to \pi_1 (\PGL_r)$ can be identified with  the reduction mod $r$ map $\bZ \to \bZ_r$. We write $\overline{\delta} := \delta \;(\!\!\mod r)$.

As $X$ is a curve, Tsen's theorem implies that every $\PGL_r$-bundle over $X$ lifts to a $\GL_r$-bundle. Moreover, by Theorem \ref{RelateStrata} (b), such liftings preserve stability. Therefore, for each $\delta \in \bZ$ the morphism $\phi_\delta \colon M_X ( \GL_r , \delta ) \to M_X ( \PGL_r, \overline{\delta} )$ is surjective. Moreover, it follows from \cite[Proposition 5.5.2]{Gro} that a general fibre $\phi_\delta^{-1} ( E )$ is a torsor over $\Pic^0 (X)$, and in particular of dimension $g$. (This also follows from the fact that two vector bundles $V$ and $V'$ give rise to isomorphic projective bundles if and only if $V' \cong V \otimes L$ for some line bundle $L$).

Now let $P_n \subseteq \GL_r$ be the maximal parabolic preserving a fixed $\C^n \subset \C^r$. Then $\phi (P_n)$ is a parabolic subgroup of $\PGL_r$ by Lemma \ref{parprops}. By the last paragraph and by Theorem \ref{GLrCase}, we deduce:

\begin{quote}
Let $\epsilon \in \{ 0 , \ldots , r-1 \}$ be such that $n(r-n)(g-1) + \varepsilon \equiv n \delta \;(\!\!\mod r)$. Then if $0 < s \le n(r-n)(g-1) + \epsilon$ and $s \equiv n \delta \;(\!\!\mod r)$, the locus $M_X (\PGL_r, \overline{\delta} ; \phi (P_n ), s )^\circ$ is
irreducible and of dimension
\[
(r^2 - n(r-n))(g-1) + s + 1 - g \ = \ (r^2 - n(r-n) - 1)(g - 1) + s .
\]
\end{quote}

\begin{remark} \label{TopTypeFiner}
As mentioned at the end of Example \ref{ParabolicInGLr}, topological type is in general a finer invariant than numerical type. Here, for example, $\pi_1 ( \PGL_r ) = \bZ_r$, while $\Hom ( X^* (\PGL_r), \bZ )$ is trivial since $X^* ( \PGL_r )$ is.
\end{remark}

\begin{remark}
Along the lines of the previous example: Suppose that $m$ divides $r$. Let $K:=\bZ_m\subset Z(\SL_r)=\bZ_r$ and consider the surjective adjoint map of groups $\phi \colon \SL_r/K\rightarrow \PGL_r$. At the level of fundamental groups the map $\phi_* \colon \pi_1(\SL_r/K)=\bZ_{r/m}\rightarrow \pi_1(\PGL_r)=\bZ_r$ can be identified with $\delta\mapsto m\delta$, the multiplication by $m$ (the order of $K$).

By Theorem \ref{RelateStrata}, for each $\delta\in\pi_1(\SL_r/K)=\bZ_{r/m}$ we obtain a morphism
\[
\phi_\delta \colon M_X (\SL_r/K, \delta ) \ \to \ M_X (\PGL_r, m\delta )
\]
such that $\phi_\delta^{-1} M_X (\PGL_r, m\delta ; \phi (P), s)^\circ = M_X (\SL_r/K, \delta ; P, s)^\circ$. In particular, for trivial $K$ we obtain a morphism $\phi_1 \colon M_X (\SL_r) \ \to \ M_X (\PGL_r, 0)$ from the unique component of the moduli space of principal $\SL_r$-bundles to the component of the moduli of $\PGL_r$-bundles of trivial topological type, which preserves the Segre stratification. 
\end{remark}

\begin{remark}
In a similar vein, the surjective adjoint homomorphism $\phi: \Sp_{2n}\to \PSp_{2n}$, where $\PSp_{2n}=\Sp_{2n}/Z(\Sp_{2n})=\Sp_{2n}/\bZ_2$, provides us with a surjective morphism \linebreak $\phi_0 \colon M_X (\Sp_{2n}) \ \to \ M_X (\PSp_{2n}, 0)$ from the unique component of the moduli space of principal $\Sp_{2n}$-bundles to the component of the moduli of $\PSp_{2n}$-bundles of trivial topological type $\delta = 0$, preserving the Segre stratification as $\phi_0^{-1} M_X (\PSp_{2n}, 0 ; \phi (P), s)^\circ = M_X (\Sp_{2n}; P, s)^\circ$.
\end{remark}

\subsubsection{\texorpdfstring{$\Spin_{2n}$}{Spin\_2n} and \texorpdfstring{$\SO_{2n}$}{SO\_2n}}\label{SpinSO}

For $r \geq 3$, consider the universal covering sequence \linebreak $1 \to \bZ_2\to \Spin_r \xrightarrow{\phi} \SO_r \to 1$. As $\Spin_r$ is simply connected, $M_X (\Spin_r)$ has just one component. The obstruction to lifting an $\SO_r$-bundle $E$ to a $\Spin_r$-bundle is the second Stiefel--Whitney class $w_2 (E) \in H^2 (X, \bZ_2) = \bZ_2$. As $\bZ_2 \subseteq Z ( \Spin_r )$ for all $r \ge 3$, again by Theorem \ref{RelateStrata} (b) liftings preserve stability, so the map $\phi_0: M_X (\Spin_r) \to M_X (\SO_r, 0)$ is surjective. Furthermore, by \cite[pp.\ 79--80]{Gro}, each fibre of $\phi_0$ admits a transitive action of $H^1 (X, \bZ_2)$, which is identified with the group $\Pic^0 (X)[2]$ of $2$-torsion line bundles over $X$. Thus $\phi_0$ is quasi-finite.

Suppose now that $r = 2n$ is even. Let $P$ be the maximal parabolic subgroup stabilising an isotropic subspace $\Lambda \subset \C^{2n}$ of dimension $n$. (Note that there are two conjugacy classes of such $P$; we return to this point below.) Let us use the results of \cite{CH2} to give the dimension of the strata $M_X (\Spin_{2n} ; P, s)$. Consider the map $\psi: M_X (\SO_{2n} , 0) \ \to \ M_X ( \SL_{2n} )$
given by extension of structure group. By \cite[{\S} 2]{Ser}, this is a ramified double cover. It follows that the composed map $\psi\circ \phi_0: M_X ( \Spin_r ) \to M_X (\SL_{2n})$ is quasi-finite\footnote{Note that \cite{CH2} uses the language of vector bundles; here we are applying the natural identification between $M_X (\SL_{2n})$ and the moduli space of stable vector bundles of rank $2n$ and trivial determinant.}.

Therefore, by Theorem \ref{RelateStrata} (d) and Theorem \ref{OrthStrat}, for $s$ satisfying the conditions on (\ref{conditions-Spin}), the stratum $M_X (\Spin_{2n} ; \phi^{-1} (P) , s)^\circ$ is of dimension $\frac{1}{2}n(3n - 1)(g - 1) + s$.

\begin{remark}
In the above situation, there are two possibilities for the parabolic $P$ up to conjugacy. For: The orthogonal Grassmann variety parameterising $n$-dimensional isotropic subspaces of $\C^{2n}$ is a disjoint union $\OG (n, \C^{2n})_1 \sqcup \OG (n, \C^{2n})_2$. If for $i \in \{ 1, 2 \}$ we take $P_i$ to be the stabiliser of an element $\Lambda_i \in \OG (n, \C^{2n})_i$, then $P_1$ and $P_2$ are not conjugate in $\SO_{2n}$.

Suppose now that $E \to X$ is an $\SO_{2n}$-bundle and $\sigma \colon X \to E/P_1$ a reduction of structure group. This means that $E$ is given with respect to an appropriate covering $\uU$ by a cocycle $\{ \gamma_{\alpha \beta} \}$ where $\gamma_{\alpha \beta} \in P_1$. Recall now the sequence of nonabelian cohomology sets
\[
\cdots \ \to \ H^0 (X, \bZ_2 ) \ \to \ H^1 (X, \SO_{2n} ) \ \to \ H^1 (X, \Or_{2n} ) \ \to \ \cdots
\]
By \cite{Gro}, two $\SO_{2n}$-bundles $E$ and $E'$ map to the same $\Or_{2n}$-bundle if and only if they are exchanged by the group $H^0 (X, \Or_{2n} / \SO_{2n} ) = \bZ_2$ acting by conjugation by $h \in \Or_{2n} \setminus \SO_{2n}$; explicitly,
\[
h \cdot \{ \gamma_{\alpha \beta} \} \ = \ \{ h \gamma_{\alpha \beta} h^{-1} \} .
\]
Thus the bundle $h \cdot E$ admits a reduction of structure group to $h P_1 h^{-1}$. This is precisely the stabiliser of $h \Lambda_1$. It is not hard to show that $h \Lambda_1 \in \OG(n, \C^{2n})_2$, the opposite component, and so $h P_1 h^{-1}$ is conjugate over $\SO_{2n}$ to $P_2$ and not to $P_1$. In summary, the action of $H^0 (X, \Or_{2n} / \SO_{2n} )$ sends bundles with $P_1$-reductions to bundles with $P_2$-reductions.

Therefore, the loci $M_X ( \SO_{2n}, 0 ; P_1 , s ) $ and $M_X ( \SO_{2n}, 0 ; P_2 , s )$, which a priori are not expected to coincide in $M_X (\SO_{2n})$, are in fact identified in $M_X ( \Or_{2n}, 0 )$ and thereby in $M_X (\SL_{2n})$. This is in agreement with the irreducibility of
\[
\{ V \in M_X (\SL_{2n}) : V \hbox{ orthogonal and } t(V) \le t \}
\]
proven in \cite{CH2}.
\end{remark}

\begin{remark}
Using \cite[Theorem 3.1 and Theorem 5.1]{CH3}, one can write down a similar statement relating the stratification on $M_X (\Spin_{2n+1})$ with the one on $M_X ( \SO_{2n+1} , 0)$.   
\end{remark}

\section{Segre stratification for the Borel subgroup of \texorpdfstring{$\GL_3$}{GL\_3}}\label{sec:Borel}

Set $G = \GL_3$, and let $\delta \in \pi_1 (G) = \bZ$ be a topological type. Let $B \subset G$ be the Borel subgroup of upper triangular matrices. In this section we study the Segre stratification of $M_X (G, \delta)$ into the strata $M_X ( G, \delta ; B, s )$, $s > 0$, which is the simplest example associated to a nonmaximal parabolic subgroup. We will see that, in comparison with the case of a maximal parabolic (cf.\ Theorem \ref{GLrCase}), the situation is more involved: for example, the strata are not irreducible. 

As in \cite{CH3} and elsewhere, we will work with associated vector bundles rather than $G$-bundles directly, and construct these as iterated extensions. 
Given a principal $G$-bundle $E$ of topological type $\delta$, we denote by $V$ the associated vector bundle $E( \C^3 )$ of rank three and degree $\delta$. We write $\pi \colon E/B \to X$ for the associated homogeneous space bundle. If $\sigma \colon X \to E/B$ is a reduction of structure group, we denote by $E_B$ the corresponding principal $B$-bundle (cf.\ $\S$\ref{sec:reductions}).

\subsection{Iterated extensions}\label{ssec:iterated_extensions}

We firstly discuss the degree $d_B (E_B)$ for a $B$-bundle $E_B$ as in Definition \ref{defn:degree}. By (\ref{XstarP}), the character group $X^* (B)$ coincides with $X^* (T) = \bZ^3$ where $T \subset B \subset G$ is the maximal torus of diagonal matrices. Thus $\Hom (X^* (B), \bZ) \cong \Hom ( X^* (T), \bZ )$. Elements of the latter are in bijection with triples $\ud := (d_1, d_2, d_3) \in \bZ^3$, \linebreak where $\ud \colon X^* (T) \ \to \ \bZ$ 
sends the character
\begin{equation} \label{charchi}
\chi \ := \ \begin{pmatrix} t_1  & 0 & 0 \\ 0 & t_2 & 0 \\ 0 & 0 &  t_3 \end{pmatrix} \ \mapsto \ t_1^{\ell_1} \cdot t_2^{\ell_2} \cdot t_3^{\ell_3}
\end{equation}
to
\begin{equation} \label{character}
\deg \left( t \mapsto \chi \begin{pmatrix} t^{d_1}  & 0 & 0 \\ 0 & t^{d_2} & 0 \\ 0 & 0 &  t^{d_3} \end{pmatrix} \right) = \ell_1 d_1 + \ell_2 d_2 + \ell_3 d_3 \; .
\end{equation}

\begin{lemma} \label{lemma:structure}
Let $E \to X$ be a principal $G$-bundle of topological type $\delta \in \pi_1 (G) = \bZ$, and let $\sigma \colon X \to E/B$ be a reduction.

\begin{enumerate}
\renewcommand{\labelenumi}{(\alph{enumi})}
\item There exist line bundles $L_1$, $L_2$, $L_3$ together with rank two bundles $F$ and $F'$ over $X$ fitting into an exact and commutative diagram 
\begin{equation} \label{diagram:iteratedext} \xymatrix{
L_1 \ar[r] \ar[d]^\wr & F \ar[d] \ar[r] & L_2 \ar[d] \\
L_1 \ar[r] & V \ar[r] \ar[d] & F' \ar[d] \\
 & L_3 \ar[r]^\sim & L_3 \\
} \end{equation}
where $\deg L_1 + \deg L_2 + \deg L_3 = \delta$.
\item Write $d_i := \deg L_i$ for $1 \le i \le 3$. Then under the identification above, $d_B \left( E_{B} \right) = (d_1, d_2, d_3 ) \in \bZ^3$.
\item There is an exact sequence $0 \to L_1^{-1} L_2 \to \sigma^* T_\pi \to F^* \otimes L_3 \to 0$. In particular, $\deg \left(\sigma^*T_{\pi}\right)=2(d_3-d_1)$ and $s_B (E) \le 2 ( d_3 - d_1 )$.
\item If $E$ is stable, then $d_1 < \frac{\delta}{3} < d_3$ and $h^0 (X, \Hom (L_3, F)) = h^0 (X, \Hom (F', L_1 )) = 0$. Moreover, at least one of $F$ and $F'$ is a nontrivial extension.
\end{enumerate}
\end{lemma}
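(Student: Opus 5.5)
We take the reduction $\sigma$ as a full flag of subbundles, as in Example~\ref{example:reductions}(b), and read every part off that flag. We use the flag to define $L_1,L_2,L_3,F,F'$, compute the vertical tangent bundle of the full flag bundle through the intermediate Grassmann bundle, and use simplicity and indecomposability of stable bundles for (d).

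For (a), the reduction $\sigma$ gives a flag $0 \subset L_1 \subset F \subset V$ with $\rank L_1 = 1$ and $\rank F = 2$. We set $L_2 := F/L_1$, $L_3 := V/F$ and $F' := V/L_1$. The inclusion $F \subset V$ induces $L_2 = F/L_1 \hookrightarrow V/L_1 = F'$, with cokernel $V/F = L_3$. The snake lemma (or a direct diagram chase) then gives the commutative exact diagram (\ref{diagram:iteratedext}). Additivity of degree gives $d_1+d_2+d_3 = \deg V = \delta$.

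For (b), we choose transition functions of $E_B$ valued in upper triangular matrices adapted to the flag. Their diagonal entries are then the transition functions of $L_1$, $L_2$, $L_3$. The line bundle $E_B(\chi,\C)$ for $\chi = t_1^{\ell_1}t_2^{\ell_2}t_3^{\ell_3}$ is therefore $L_1^{\ell_1}L_2^{\ell_2}L_3^{\ell_3}$, which has degree $\sum \ell_i d_i$. By (\ref{character}) this is exactly the functional $\ud = (d_1,d_2,d_3)$.

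For (c), we factor $E/B \to X$ as the composition $E/B \to \Gr(2,V) \to X$. The first map sends a flag $(L_1\subset F)$ to $F$, and its fibres are the projective lines $\PP F$. This gives an exact sequence
\[
0 \to T_{E/B \to \Gr(2,V)} \to T_\pi \to (\text{pullback of } T_{\Gr(2,V)/X}) \to 0 .
\]
Pulled back by $\sigma$, the first term is $T_{\PP F}$ at the point $L_1$, namely $\Hom(L_1, F/L_1) = L_1^{-1}L_2$. The third term is $\Hom(F, V/F) = F^*\otimes L_3$, as in the computation (\ref{sPsndef}). The degrees are $(d_2-d_1) + \bigl(2d_3 - (d_1+d_2)\bigr) = 2(d_3-d_1)$. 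The bound $s_B(E) \le 2(d_3-d_1)$ then holds by Definition~\ref{sPDefn}. One could check the same answer via (\ref{sPadjoint}), using the $B$-filtration of $\fg/\fb$ by the weights $t_2/t_1$, $t_3/t_1$, $t_3/t_2$. Pinning down this identification of $\sigma^*T_\pi$, with the correct weights and directions, is the only point needing real care.

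For (d), stability of $V$ applied to the subbundle $L_1$ gives $d_1 < \delta/3$, and applied to the quotient $L_3$ it gives $d_3 > \delta/3$.
\begin{itemize}
\item A nonzero map $L_3 \to F$ yields a composition $V \to L_3 \to F \to V$, a nonzero endomorphism of rank at most one, hence not scalar. This contradicts simplicity of the stable bundle $V$.
\item Similarly, a nonzero map $F' \to L_1$ yields a nonzero non-scalar endomorphism $V \to F' \to L_1 \to V$, again a contradiction.
\end{itemize}
Finally, suppose both extensions split. The splitting $F \cong L_1 \oplus L_2$ gives a map $L_2 \to F \to V$. The splitting $F' \cong L_2 \oplus L_3$ gives a map $V \to F' \to L_2$. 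By commutativity of the diagram, the composition $L_2 \to V \to L_2$ is the identity, so $L_2$ is a direct summand of $V$. This contradicts the indecomposability of stable bundles, so at least one of $F$ and $F'$ is a nontrivial extension.
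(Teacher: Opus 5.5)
Your proposal is correct and follows the paper's proof essentially step by step: the flag (equivalently, upper triangular transition functions) gives (a) and (b), the extension $0 \to L_1^{-1}L_2 \to \sigma^*T_\pi \to F^*\otimes L_3 \to 0$ gives (c), and (d) comes from non-homothety endomorphisms of $V$ together with $L_2$ splitting off when both extensions split. The only difference is in (c): the paper cites ``Lie theory'' for the extension of $T_\pi$ along the universal flag and computes $\det \sigma^*T_\pi \cong L_1^{-2}L_3^2$, whereas you obtain the same sequence from the relative tangent sequence of $E/B \to \Gr(2,V) \to X$, which is a valid and slightly more explicit justification.
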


\begin{proof}
(a) By hypothesis, with respect to a suitable set of trivialisations over an open cover $\{ U_\alpha \}$ of $X$, the transition functions of $E$ are upper triangular, of the form
\begin{equation*}
\begin{pmatrix} \left( a_1 \right)_{\alpha \beta} & \star & \star \\ 0 & \left( a_2 \right)_{\alpha \beta} & \star \\ 0 & 0 & \left( a_3 \right)_{\alpha \beta}\end{pmatrix}\; .
\end{equation*}
The existence of the desired diagram follows immediately.

(b) Notice that the line bundle $L_i$ is determined by the cocycle $\left\{ (a_i)_{\alpha \beta} \right\}$. If $\chi \in X^* (B)$ is as in (\ref{charchi}),
 then $\langle d_B \left( E_{B} \right) , \chi \rangle$ is, by definition, the degree of the line bundle determined by the cocycle
\[
\left\{ \left( a_1 \right)_{\alpha \beta}^{\ell_1} \cdot \left( a_2 \right)_{\alpha \beta}^{\ell_2} \cdot \left( a_3 \right)_{\alpha \beta}^{\ell_3} \right\} ,
\]
which is $\ell_1 d_1 + \ell_2 d_2 + \ell_3 d_3$. Thus $d_B \left( E_{B} \right) = (d_1, d_2, d_3)$ in view of (\ref{character}).

(c) The full flag variety $G/B$ parametrises flags $0 \subset \mathsf{L} \subset \mathsf{F} \subset \C^3$. Globalising the universal flag over $G/B$, over $E/B$ we have a universal flag of subbundles $0 \subset \cL \subset \cF \subset \pi^* V$. Then $\sigma^* \cL = L_1$ and $\sigma^* \cF = F$. Now by Lie theory, $T_\pi$ is an extension
\[
0 \ \to \ \Hom \left( \cL , \frac{\cF}{\cL} \right) \ \to \ T_\pi \ \to \ \Hom \left( \cF , \frac{\pi^* V}{\cF} \right) \ \to \ 0 \;.
\]
Pulling back by $\sigma$, we obtain the desired exact sequence. For the rest: Noting that the dual $F^*$ is an extension $0 \to L_2^{-1} \to F^* \to L_1^{-1}$, we compute that $\det (\sigma^* T_\pi ) \cong L_1^{-2} L_3^2$. Therefore,
\[
s_B (E) \ \le \ \deg  \left( \sigma^* T_\pi \right) \ = \ 2 (d_3 - d_1 ) \;.
\]

(d) In view of (\ref{sPsndef}), the vector bundle $V$ is stable if and only if $E$ is a stable $G$-bundle. In this case, $d_1 < \frac{\delta}{3} < d_3$ since $L_1$ and $L_3$ are a subbundle and a quotient of $V$ respectively.

Next: If $\phi \colon L_3 \to F$ is a nonzero map, then the composition $V \to L_3 \overset{\phi}{\to} F \to V$ is an endomorphism which is not a homothety, and $V$ cannot be stable. The vanishing of $h^0 (X, \Hom (F', L_1) )$ is proven similarly.

Finally, if both $F$ and $F'$ are trivial extensions then there is a nonzero map
\[
L_2 \ \to \ F \ \to \ V \ \to \ F' \ \to \ L_2
\]
and $L_2$ is a direct summand of $V$, contradicting stability.
\end{proof}

\subsection{Nonemptiness of strata}

We give now a pair of lemmas which will be useful both for proving the existence of stable $G$-bundles and for deciding which Segre strata lie in the closures of which others. We continue to write $d_i$ for $\deg L_i$ in what follows.

\begin{lemma} \label{lemma:closure}
Let $0 \to L_1 \to F \to L_2 \to 0$ be an extension of line bundles. Then there exists an irreducible family $\cF \to T \times X$ containing $F$ and whose general element $\cF_t$ is an extension $0 \to M_t \to \cF_t \to N_t \to 0$ where $\deg M_t = d_1 - 1$ and $\deg N_t = d_2 + 1$. 
\end{lemma}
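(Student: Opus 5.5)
The idea is to degenerate the subbundle $L_1$ to the elementary transform $L_1(-x)$ for a point $x \in X$. The quotient then jumps by one degree, and the family records the point $x$ together with a choice of lifting. The construction uses elementary transformations (Hecke modifications) of $F$ along a point.

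First, fix $x \in X$. Subsheaves of $F$ of the form $M = L_1(-x)$ are not saturated, so instead take line subbundles $M \subset F$ of degree $d_1 - 1$ with $M \neq L_1(-x)$ as subsheaves. Concretely, I consider maps $M \to F$ where $M$ varies in $\Pic^{d_1-1}(X)$. Since $\deg F = d_1 + d_2$, the space $\Hom(M, F)$ contains $\Hom(L_1(-x), F) \supseteq H^0(L_1^{-1}L_1(x)) = H^0(\cO(x)) \neq 0$. The section vanishing at $x$ gives the nonsaturated inclusion $L_1(-x) \to L_1 \to F$.

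Second, perturb this inclusion. Take $M = L_1(-x)$ and let $T$ be a dense open subset of $\PP H^0(X, \Hom(L_1(-x), F))$ (or of the total space over varying $x$). This is irreducible, and its general element is a map $L_1(-x) \to F$ which is saturated, i.e. vanishes nowhere. I must show that some element of $\Hom(L_1(-x), F) = H^0(F \otimes L_1^{-1}(x))$ has no zeros. The bundle $F \otimes L_1^{-1}(x)$ is an extension of $L_2 L_1^{-1}(x)$ by $\cO(x)$. The approach is: at $x$, the section of $\cO(x)$ vanishes, but composing to the quotient, a section of $L_2L_1^{-1}(x)$ lifting to $H^0(F\otimes L_1^{-1}(x))$ and nonzero at $x$ suffices if available. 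This is the main obstacle. Availability depends on $h^0$ and the connecting map, and may fail, e.g. if $d_2 - d_1$ is very negative.

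To avoid this, I would instead build the family directly on the other side. Let $\cF$ be the family of bundles obtained as elementary transformations $\cF_t = \Ker(F(y) \to \C_y)$ for points $y$ and lines... This changes degree, so instead I would use extensions. The cleaner route is to deform $F$ itself rather than the subbundle. Consider the bundle $F_0 := L_1 \oplus L_2$ or, better, the space $\mathrm{Ext}^1(L_2(x), L_1(-x))$ of extensions $0 \to L_1(-x) \to \cF_t \to L_2(x) \to 0$. The original $F$ lies in the closure of such extensions. Indeed, $F$ contains $L_1(-x)$ with quotient $Q$, where $Q$ is an extension of $L_2$ by the skyscraper $\C_x$. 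Twisting $Q$'s torsion-free part equals $L_2$ only if $Q$ has torsion, so one instead uses the standard fact that the locus of bundles admitting a degree $d_1-1$ subbundle is closed (by semicontinuity, as in Theorem \ref{semicont}). Every $F$ admits such a subbundle once $\dim \mathrm{Ext}^1$ is large, so a general deformation of $F$ in a versal irreducible family (e.g. a neighbourhood in the stack of rank two bundles of degree $d_1+d_2$) has maximal subbundle degree bounded by Nagata/Segre. The irreducible family $T$ is taken as the image of the irreducible space of pairs $(L_1(-x)\hookrightarrow \cdot)$ extensions, closed under specialization to $F$.

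Hence the key step, which I expect to be hardest, is exhibiting $F$ as a limit of extensions of $N$ by $M$ with $\deg M = d_1 - 1$ and $\deg N = d_2 + 1$. I would do this explicitly. Take the family over $t \in \C$ of extension classes $t\cdot e + e'$ in $H^1(L_2(x)^{-1}L_1(-x))$, pulled back via the map $\mathrm{Ext}^1(L_2, L_1) \to \mathrm{Ext}^1(L_2(x), L_1)$ and pushed forward, and check by direct computation with the elementary modification at $x$ that the $t=0$ member recovers $F$.
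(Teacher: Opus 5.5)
There is a genuine gap. The step you call the key one, realising $F$ as a specialisation of extensions of a degree $d_2+1$ line bundle by a degree $d_1-1$ line bundle, is the whole content of the lemma, and none of your routes establishes it.

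\begin{itemize}
\item Your versal-deformation argument uses $s_1\le g$: a general bundle of degree $d_1+d_2$ then has line subbundles of degree $d_1-1$. This works only when $d_2-d_1$ is at least about $g-2$, which is the paper's easy case. For $d_2-d_1$ small compared with $g$, a general deformation of $F$ has \emph{no} line subbundle of degree $d_1-1$, so a special family is needed.
\item Your appeal to closedness of the locus of bundles with a line subbundle of degree $\ge d_1-1$ points the wrong way. $F$ already lies in that closed set. Closedness says nothing about $F$ being a limit of bundles with a saturated line subbundle of degree exactly $d_1-1$.
\item Your explicit family cannot work as stated. Every member of a family of extensions $0\to L_1(-x)\to\cF_t\to L_2(x)\to 0$, including the $t=0$ member, contains $L_1(-x)$ as a line subbundle. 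But $F$ in general has no line subbundle of degree $d_1-1$. For example, if $d_2\le d_1-2$, such an $M$ has $\Hom(M,L_2)=0$, so $M\subset L_1$ is not saturated.
\item The maps you write between Ext groups also go the wrong way: the natural map is $\mathrm{Ext}^1(L_2(x),L_1(-x))\to\mathrm{Ext}^1(L_2,L_1)$.
\end{itemize}

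The paper closes the hard case $d_2-d_1<g-2$ with secant geometry. It writes $F$ as an extension of $Q$ by $P$ with $\deg P\ll 0$, so that $L_1=Q(-D)$ with $\deg D=e$ and the extension class lies on the secant $\langle D\rangle\subset|\Kx QP^{-1}|^*$. A dimension count shows $\Sec^e(X)$ is not dense. A curve through the class whose general point lies in $\Sec^{e+1}(X)\setminus\Sec^e(X)$ then gives the family.

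Alternatively, your opening idea can be completed by a Hecke family, with no case distinction. Choose $y\in X$ and a line $\ell\subset F_y$ with $\ell\neq(L_1)_y$, and put $F_\ell:=\Ker(F\to F_y/\ell)$. Then $F_\ell$ contains $L_1(-y)$ as a line subbundle with quotient $L_2$. The sheaves $F'$ with $F_\ell\subset F'\subset F_\ell(y)$ and $\deg F'=d_1+d_2$ form a flat family of vector bundles over $\PP^1=\PP(F_\ell(y)|_y)$. The member given by the image of $(L_1)_y$ is $F$. Every other member contains $L_1(-y)$ as a saturated subsheaf with quotient $L_2(y)$.

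So $F$ is not the $t=0$ member of a family of extensions of $L_2(y)$ by $L_1(-y)$. It is the extra point obtained by completing such a family via elementary modification, and that completion is exactly what your sketch is missing.
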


\begin{proof}
Suppose firstly that $d_2 - d_1 \ge g - 2$. Recall the Segre invariant $s_1$ as defined in (\ref{sndefn}). For any $F'$ of degree $d_1 + d_2$ we have
\[
(d_1 + d_2) - 2(d_1 - 1) \ \ge \ g \ \ge \ s_1 (F')
\]
in view of Theorem \ref{GLrCase}. Thus a general $F'$ of rank two and degree $d_1 + d_2$ has line subbundles of degree $d_1 - 1$, and any sufficiently general deformation of $F$ over an irreducible base will suffice. Suppose, then, that $d_2 - d_1 < g - 2$.

For any line bundle $P$ of sufficiently low degree, we can write $F$ as an extension of line bundles $0 \to P \to F \to Q \to 0$ with class $\varepsilon \in \PP H^1 (X, \Hom (Q, P)) \cong |\Kx Q P^{-1}|^*$, the latter identification by Serre duality. Further reducing $\deg P$ if necessary, we can assume that $P \ne L_1$, and therefore that $L_1 = Q(-D)$ for some effective divisor $D$ of degree $e := \deg Q - d_1$. By the proof of \cite[Proposition 1.1]{LN}, the class $\varepsilon$ belongs to the secant spanned by $D$ in $|\Kx Q P^{-1}|^*$.

\textbf{Claim:} $\Sec^e (X)$ is not dense in $|\Kx Q P^{-1}|^*$.

To see this, note firstly that $\dim \Sec^e (X) \le 2e - 1$. Thus it will suffice to show that $2e - 1 < \dim | \Kx Q P^{-1} |^*$. As $e = \deg Q - d_1$, by Riemann--Roch, this inequality would follow from
\[
2 \deg Q - 2d_1 - 1 \ < \ \deg Q - \deg P + g - 2 \; ,
\]
that is, $\deg Q + \deg P < 2d_1 + g - 1$. But
\[
\deg Q + \deg P \ = \ \deg F \ = \ d_1 + d_2 \; ,
\]
and by the hypothesis $d_2 - d_1 < g - 2$ this is certainly less than $2d_1 + g - 1$. This proves the Claim.

Noting that $\Sec^e (X)$ is contained in the closure of $\Sec^{e + 1} (X)$, by the Claim we can choose an irreducible curve $T$ in $|\Kx Q P^{-1}|^*$ containing $\varepsilon$ and of which a general element belongs to $\Sec^{e + 1} (X) \setminus \Sec^e (X)$. Again using the argument of \cite[Proposition 1.1]{LN}, we see that a general $t \in T$ defines an extension $0 \to P \to \cF_t \to Q \to 0$ to which some subsheaf $Q(-D'_t)$ lifts to a subbundle of $\cF_t$, where $\deg D'_t = e + 1=\deg Q-d_1+1$. Setting $M_t := Q(-D'_t)$ and $N_t := P(D'_t)$, we obtain the statement. (Note that the above argument does not require $0 \to L_1 \to F \to L_2 \to 0$ to be a nontrivial extension.)
\end{proof}

\begin{lemma} \label{lemma:deformations}
Suppose $E$ is a stable $G$-bundle admitting a $B$-reduction of degree $(d_1, d_2, d_3)$. Then there exist deformations of $E$ over irreducible bases $\tT$ and $\tT'$ where a general member of $\tT$ (resp., $\tT'$) is a stable $G$-bundle admitting a $B$-reduction of degree $( d_1 - 1 , d_2 + 1 , d_3 )$ (resp., $( d_1 , d_2 - 1 , d_3 + 1 )$).
\end{lemma}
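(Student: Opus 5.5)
We deform the associated vector bundle $V = E(\C^3)$ by deforming one of the two rank two pieces of the iterated extension (\ref{diagram:iteratedext}) while keeping the other data, and then use openness of stability. Fix the reduction $\sigma$, and the diagram of Lemma \ref{lemma:structure} (a), so that $V$ is an extension $0 \to F \to V \to L_3 \to 0$ with $F$ an extension of $L_2$ by $L_1$; dually, $V$ is an extension $0 \to L_1 \to V \to F' \to 0$ with $F'$ an extension of $L_3$ by $L_2$.

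\textbf{Case $(d_1-1, d_2+1, d_3)$.} Apply Lemma \ref{lemma:closure} to $F$ to get an irreducible family $\cF \to T \times X$ with $\cF_{t_0} = F$, whose general member is an extension $0 \to M_t \to \cF_t \to N_t \to 0$ with $\deg M_t = d_1 - 1$ and $\deg N_t = d_2 + 1$. Next we extend $\cF_t$ by $L_3$ in a family. The spaces $H^1(X, \Hom(L_3, \cF_t))$ form a coherent sheaf on $T$. Since $\deg \Hom(L_3, \cF_t)$ is constant and, after shrinking $T$ to a neighbourhood of $t_0$, $h^0$ is upper semicontinuous, these spaces have constant dimension near the generic point. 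Where the dimension jumps at $t_0$, we replace $T$ by a suitable irreducible base, for example the total space of the relevant cohomology sheaf, or of $\mathcal{E}xt^1_p$ of $p^*L_3$ by $\cF$ after passing to a cover. Since $h^0(\Hom(L_3,F))=0$ by Lemma \ref{lemma:structure} (d), $h^1$ is minimal at $t_0$, so by the base change theorem $\mathcal{E}xt^1_p(p^*L_3,\cF)$ is locally free near $t_0$. Its total space $\tT$ is then an irreducible base carrying a universal extension $0 \to \cF \to \mathcal V \to L_3 \to 0$, and the point (class of $V$ over $t_0$) lies in it. A general point of $\tT$ gives $V_{\tilde t}$ with the flag $M_t \subset \cF_t \subset V_{\tilde t}$, that is, a $B$-reduction of degree $(d_1-1, d_2+1, d_3)$. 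Stability is open in families, as is the case for semicontinuity in Theorem \ref{semicont} and for stability of vector bundles. Since $V$ is stable, the general member of $\tT$ is stable, and by (\ref{sPsndef}) so is the corresponding $G$-bundle.

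\textbf{Case $(d_1, d_2-1, d_3+1)$.} This is symmetric. Apply Lemma \ref{lemma:closure} to the extension $0 \to L_2 \to F' \to L_3 \to 0$, obtaining a family $\cF'_t$ with general member an extension of $N'_t$ (degree $d_3+1$) by $M'_t$ (degree $d_2-1$). Then build universal extensions $0 \to L_1 \to \mathcal V \to \cF' \to 0$, using $h^0(\Hom(F',L_1))=0$ from Lemma \ref{lemma:structure} (d) to get local freeness of $\mathcal{E}xt^1_p(\cF', p^*L_1)$ near $t_0$. The preimage of $M'_t$ in $V_{\tilde t}$ is a rank two subbundle containing $L_1$, which gives the flag of degree $(d_1, d_2-1, d_3+1)$. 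Alternatively, dualise $V$ and reduce to the first case.

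\textbf{Main obstacle.} The main difficulty is the relative extension step: we must ensure that the extension spaces glue into a vector bundle over an irreducible base through the given point. This is where the vanishing statements of Lemma \ref{lemma:structure} (d) enter, via cohomology and base change. We must also check that the general flag obtained is genuinely a flag of subbundles. $M_t \subset \cF_t$ is saturated by Lemma \ref{lemma:closure}, and $\cF_t \subset V_{\tilde t}$ is saturated because its quotient is the line bundle $L_3$.
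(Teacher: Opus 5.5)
Your proposal is correct and takes essentially the same route as the paper. You deform $F$ via Lemma \ref{lemma:closure}, use $h^0(X, \Hom(L_3, F)) = 0$ from Lemma \ref{lemma:structure} (d) to make the spaces $H^1(X, \Hom(L_3, \cF_t))$ form a vector bundle near $t_0$, take the resulting irreducible family of extensions (the paper uses its projectivisation rather than the total space), and conclude by openness of stability. For the second type the paper simply dualises, which is your stated alternative; your direct argument via $F'$ and $h^0(X, \Hom(F', L_1)) = 0$ works equally well.
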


\begin{proof}
As before, set $V = E(\C^3)$.
 By Lemma \ref{lemma:structure} (a), we have the diagram (\ref{diagram:iteratedext}) where $\deg L_i = d_i$. Let $\cF \to T \times X$ be the family constructed in Lemma \ref{lemma:closure}. As $E$ is stable, $h^0 (X, L_3^{-1} \otimes F ) = 0$ by Lemma \ref{lemma:structure} (d). Therefore, shrinking $T$ if necessary, we may assume that $h^1 (X, L_3^{-1} \otimes \cF_t )$ is constant on $T$. Let $\tT \to T$ be the projective bundle with fibre $\PP H^1 (X, L_3^{-1} \otimes \cF_t )$ at $t$. This is irreducible since $T$ is. In a standard way, $\tT$ parameterises nontrivial extensions $0 \to \cF_t \to V' \to L_3 \to 0$. By Lemma \ref{lemma:closure}, a general element of $\tT$ defines a diagram
\[ \xymatrix{
M_1 \ar[r] \ar[d]^\wr & \cF_t \ar[r] \ar[d] & M_2 \ar[d] \\
M_1 \ar[r] & V' \ar[r] \ar[d] & V/M_1 \ar[d] \\
 & L_3 \ar[r]^\sim & L_3
} \]
where $\deg M_1 = d_1 - 1$ and $\deg M_2 = d_2 + 1$. As $V$ is a stable bundle represented in $\tT$, stable bundles form a nonempty open subset of $\tT$.

For the rest: We observe that the dual $V^*$ has topological type $-\delta$ and admits a $B$-reduction of type $(-d_3, -d_2, -d_1 )$. Applying the above construction to $V^*$, we obtain a deformation whose general element $W$ is stable and admits a $B$-reduction of type \linebreak $(-d_3 - 1, -d_2 + 1, -d_1 )$. Dualising, we obtain a deformation of $V$ of which a general element admits a $B$-reduction of type $(d_1, d_2 - 1, d_3 + 1)$, as desired.
\end{proof}

\begin{proposition} \label{prop:existence}
For any tuple $\ud=(d_1, d_2, d_3)$ satisfying $d_1 < \frac{\delta}{3} < d_3$, there exist stable $G$-bundles admitting $B$-reductions of type $\ud$.
\end{proposition}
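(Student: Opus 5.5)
We argue by induction on $d_3 - d_1$, using Lemma \ref{lemma:deformations} to move from a base case to all admissible tuples. Note first that by Lemma \ref{lemma:structure} (a)--(b), a $G$-bundle $E$ admits a $B$-reduction of type $\ud$ exactly when $V = E(\C^3)$ is an iterated extension as in (\ref{diagram:iteratedext}) with $\deg L_i = d_i$. Also, $E$ is stable if and only if $V$ is stable. So it suffices to produce stable rank three bundles of degree $\delta$ with a full flag of subbundles of the prescribed degrees.

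\textbf{Base cases.} The condition $d_1 < \frac{\delta}{3} < d_3$ with $d_1+d_2+d_3=\delta$ defines a set of integer triples. We start from the tuples with $d_3 - d_1$ minimal. For such a tuple we argue as follows. A general stable bundle $V$ of rank three and degree $\delta$ has maximal line subbundles $L_1$ with $\delta - 3d_1 = s_1(V)$. It also has maximal rank two subbundles $F$ with $2\delta - 3\deg F = s_2(V)$, and both Segre invariants are large by Theorem \ref{GLrCase}. Hence $V$ has line subbundles of every degree $\le d_1^{\max}$, and quotient line bundles of every degree $\ge d_3^{\min}$, where these bounds are governed by $s_1, s_2$. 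Given a line subbundle $L_1$, the quotient $F' = V/L_1$ is a rank two bundle. The question is whether $F'$ has a quotient $L_3$ of degree $d_3$ with kernel $L_2$ of degree $d_2$. This holds whenever $d_3$ is at least the minimal quotient degree of $F'$, which is controlled by $s_1(F')$. To reduce the need for genericity arguments, the cleaner route is to choose the base case as follows: take $d_1$ very negative and $d_3$ very positive. Then we construct $V$ directly as an extension $0 \to F \to V \to L_3 \to 0$, where $F$ is a general stable-ish extension of $L_2$ by $L_1$ and $L_3$ is arbitrary of degree $d_3$. Stability of a general such extension follows by the standard Lange--Narasimhan/secant argument, since $h^1(L_3^{-1}F)$ is large.

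\textbf{Induction, and the main obstacle.} Reversing this is the problem: Lemma \ref{lemma:deformations} only \emph{increases} $d_3 - d_1$. It shifts $(d_1,d_2,d_3) \mapsto (d_1-1,d_2+1,d_3)$ or $(d_1,d_2-1,d_3+1)$, each preserving stability generically. So the right base cases are the tuples with \emph{minimal} $d_3 - d_1$, namely those with $d_1 = \lceil \delta/3\rceil - 1$ and $d_3 = \lfloor \delta/3\rfloor + 1$, together with the near-balanced $d_2$. Every admissible tuple is reached from a base tuple by a sequence of the two moves: first lower $d_1$ to its target (raising $d_2$), then raise $d_3$ (lowering $d_2$). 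A short combinatorial check shows the intermediate tuples still satisfy $d_1 < \delta/3 < d_3$, and that any $d_2$ can be hit by choosing the starting base tuple appropriately. The main obstacle is thus the base case: we must exhibit a stable $V$ with a nearly balanced flag. We plan to do this by showing that a \emph{general} stable $V$ of degree $\delta$ admits such a flag. Take $L_1 \subset V$ a line subbundle of degree $d_1 \le \lfloor(\delta - s_1)/3\rfloor$, which exists since every line subbundle has degree $<\delta/3$ and for a general $V$ maximal ones have degree close to $\delta/3$. Then we check that the rank two quotient $F' = V/L_1$ is, for general choices, stable enough to admit a quotient line bundle of degree $d_3$. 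If this genericity argument proves delicate, we fall back on a dimension count. We compare the dimension of the space of iterated extensions, computed via $h^1$'s and Riemann--Roch as in the proof of Lemma \ref{lemma:deformations}, against that of the unstable loci. This shows the stable locus in $\tT$ is nonempty.
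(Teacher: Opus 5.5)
Your inductive step is the same as the paper's. The base tuple is the one with $d_1=\lceil\delta/3\rceil-1$ and $d_3=\lfloor\delta/3\rfloor+1$, that is $(d_0-1,d_0,d_0+1)$, $(d_0,d_0,d_0+1)$ or $(d_0,d_0+1,d_0+1)$ with $d_0=\lfloor\delta/3\rfloor$. Every admissible $\ud$ is reached from it by applying the two moves of Lemma \ref{lemma:deformations} $(\lceil\delta/3\rceil-1)-d_1$ and $d_3-(\lfloor\delta/3\rfloor+1)$ times. Since $d_2=\delta-d_1-d_3$ is forced, there is exactly one base tuple for each $\delta$, so there is no choice of base tuple to make. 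The gap is the base case, and that is where the real content of the proposition lies.

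\textbf{The base-case plan fails.} Your primary plan is to show that a \emph{general} stable $V$ of degree $\delta$ carries a flag of the base type, and this is false. By Theorem \ref{GLrCase} with $r=3$, $n=1$, each stratum $U_X(3,\delta;1,s)$ with $s<2(g-1)$ has dimension $7(g-1)+s+1<9(g-1)+1$. Hence a general $V$ satisfies $\delta-3\deg L\ge s_1(V)\ge 2(g-1)$ for every line subbundle $L$. Its maximal line subbundles therefore have degree roughly $(\delta-2(g-1))/3$, not ``close to $\delta/3$''. For the base tuple, $\delta-3d_1\in\{1,2,3\}$. So as soon as $g\ge 3$, a general $V$ has no line subbundle of degree $d_1$ at all. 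This is consistent with the parameter count of Theorem \ref{thm:irr}, which bounds the base locus by $6g-5+s$ with $s\le 4$, and this is less than $9g-8$. The base-case bundles are special and must be constructed.

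\textbf{What the fallback would need.} Your fallback, a ``dimension count against the unstable loci'', is exactly the missing argument, not a routine check. You must say which subbundles can destabilise an iterated extension of this type, and show that a general extension class avoids them. The paper does this as follows.
\begin{itemize}
\item Twisting and dualising reduce to the types $(-1,0,1)$ and $(0,0,1)$.
\item Take nontrivial extensions $0\to L_1\to F\to L_2\to 0$ and $0\to F\to V\to L_3\to 0$ of type $(-1,0,1)$. Then $F$ is stable, and a destabilising $W\subset V$ is of one of two kinds:
\begin{itemize}
\item[(i)] $W$ has rank two and surjects onto $L_3$, with $W_1:=W\cap F$ a line subbundle of degree $-1$;
\item[(ii)] $W$ is a lift of $L_3(-x)$ for some $x\in X$.
\end{itemize}
\item In case (i), the Narasimhan--Ramanan lifting criterion puts the class of $V$ in the proper subspace $\Ker\left(H^1(X,\Hom(L_3,F))\to H^1(X,\Hom(L_3,F/W_1))\right)$. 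There are only finitely many possible $W_1$.
\item In case (ii), the class lies on the image of the ruled surface $\PP(L_3^{-1}\otimes F)$ inside $\PP H^1(X,\Hom(L_3,F))$, a projective space of dimension $2g$.
\item A general class avoids both, and the type $(0,0,1)$ is handled similarly.
\end{itemize}
Without an analysis of this kind, your induction has nothing to start from.
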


\begin{proof}
Set $d_0 := \left\lfloor \frac{\delta}{3} \right\rfloor$. Let us prove the existence for
\begin{equation} \label{EasyTypes}
\ud \ \in \ \left\{ (d_0 - 1 , d_0 , d_0 + 1) , (d_0 , d_0 , d_0 + 1) , (d_0 , d_0 + 1, d_0 + 1) \right\} ;
\end{equation}
note that these cover all possibilities for $\delta \;  (\!\!\mod 3)$. By duality, the second and third cases are equivalent. Furthermore, as stability is invariant under tensor product by line bundles, we may assume for convenience that $\ud = (-1, 0, 1)$ or $(0, 0, 1)$.

Let, then, $0 \to L_1 \to F \to L_2 \to 0$ and $0 \to F \to V \to L_3 \to 0$ be nontrivial extensions where $(d_1, d_2, d_3) = (-1, 0, 1)$, and consider a subbundle $W \subset V$. Then we have a diagram
\[ \xymatrix{
0 \ar[r] & F \ar[r] & V \ar[r] & L_3 \ar[r] & 0 \\
0 \ar[r] & W_1 \ar[r] \ar[u] & W \ar[r] \ar[u] & W_2 \ar[r] \ar[u] & 0
} \]
It is easy to see that $W$ destabilises $V$ if and only if either
\begin{enumerate}
\item[(i)] $W_2 = L_3$ and $W_1$ is a line bundle of degree $-1$, or
\item[(ii)] $W_1 = 0$ and $W \cong W_2$ is a lifting of a subsheaf $L_3 (-x)$ of $L_3$ for some $x \in X$.
\end{enumerate}
Suppose that (i) occurs. Then the map $F \to F / W_1$ factorises via $V$. By \cite[Lemma 3.2]{NR}, this is equivalent to
\[
\delta(V) \ \in \ \Ker \left( H^1 (X, \Hom (L_3, F)) \ \to \ H^1 (X, \Hom (L_3, F/W_1)) \right) ,
\]
where $\delta$ here is the connecting homomorphism in the long exact sequence. This kernel is a proper subspace of $H^1 (X, \Hom (L_3, F))$ because the target is nonzero and the map is surjective. Moreover, by \cite[Proposition 4.2]{LN} there are only finitely many possibilities for $W_1$. Thus a general $V$ is not destabilised by a $W$ of type (i).

For (ii): By \cite[Theorem 4.4 (i)]{CH0}, a lifting $L_3 (-x) \to V$ exists only if $\delta (V)$ belongs to the image of the ruled surface $\PP ( L_3^{-1} \otimes F )$ in $\PP H^1 (X, \Hom (L_3, F))$. Counting dimensions with Riemann--Roch, we see that this does not apply to a general $\delta (V)$.

Existence of stable $B$-bundles of type $(0, 0, 1)$ follows from a similar but easier argument, which we omit here.

With the existence established of stable bundles of the types in (\ref{EasyTypes}), as stability is open in families, by repeatedly applying the two constructions in Lemma \ref{lemma:deformations}, we can obtain stable $G$-bundles of topological type $\delta$ admitting $B$-reductions of type $(d_0 - k , d_0 + k - \ell , d_0 + \ell )$ for any $k, \ell \ge 0$. Recalling that $d_0 = \left\lfloor \frac{\delta}{3} \right\rfloor$, it is easy to check that any $\ud$ satisfying $d_1 + d_2 + d_3 = \delta$ and $d_1 < \frac{\delta}{3} < d_3$ is of this form for a unique choice of $k$ and $\ell$. This completes the proof of the proposition.
\end{proof}

\begin{remark}
Proposition \ref{prop:existence} shows in particular that there exist stable $G$-bundles $E$ with $s_B (E) = 2$ if $\delta \not\equiv 0 \;(\!\!\mod 3)$ and $s_B (E) = 4$ if $\delta \equiv 0 \;(\!\!\mod 3)$.

More generally, for each $\ud \in \pi_1 (T) = \bZ^3$ with $d_1 < \frac{\delta}{3} < d_3$, the locus
\begin{equation} \label{SegreTypeLocus}
M(\ud) \ := \ \left\{ E \in M_X (G, \delta) : E \hbox{ admits a $B$-reduction of type } \ud \right\} .
\end{equation}
is nonempty. By Lemma \ref{lemma:structure} (c), for any such $\ud$ we have $s_B (E) \le 2 (d_3 - d_1)$ for all $E \in M(\ud)$. However, in general we do not have $M(\ud) \subseteq M_X (G, \delta ; B, 2 (d_3 - d_1))^\circ$, as $E$ may admit a $B$-reduction of type $(d_1', d_2', d_3')$ where $2 (d_3' - d_1') < 2 (d_3 - d_1)$. Indeed, the difference $d_3 - d_1$ may be arbitrarily high, whereas by \cite[Theorem 1.1]{HN} we have $s_B (E) \le \dim (G/B) \cdot g = 3g$ for any $G$-bundle $E$. We return to the question of an upper bound on $s_B (E)$ in $\S$ \ref{sect:Hirschowitz}.
\end{remark}

\subsection{Geometry of strata}

We are now in a position to construct irreducible components of some Segre strata $M_X (G, \delta ; B, s)$ and prove Theorem \ref{thm:mainC} (i). 

\begin{theorem} \label{thm:irr}
Fix $\delta \in \pi_1 (G)$ and $\ud$ satisfying $d_1 < \frac{\delta}{3} < d_3$. Then the locus $M (\ud)$ defined in (\ref{SegreTypeLocus}) is nonempty and irreducible of dimension at most $6g - 5 + s$, where $s = 2 (d_3 - d_1 )$.
\end{theorem}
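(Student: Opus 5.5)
Nonemptiness is exactly Proposition \ref{prop:existence}, so the work is irreducibility and the dimension bound. The plan is to realise $M(\ud)$ as the image of an irreducible parameter space of iterated extensions, and to bound its dimension by counting parameters in the construction of Lemma \ref{lemma:structure} (a).

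\textbf{The parameter space.} Choose $(L_1, L_2, L_3) \in \Pic^{d_1}(X) \times \Pic^{d_2}(X) \times \Pic^{d_3}(X)$; these contribute $3g$ parameters. Over this base, form the projective (or affine, to allow the split case) bundle of extension classes $0 \to L_1 \to F \to L_2 \to 0$, with fibre $H^1(X, L_2^{-1}L_1)$. Over the resulting space, form the bundle of extension classes $0 \to F \to V \to L_3 \to 0$, with fibre $H^1(X, L_3^{-1} \otimes F)$.

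The fibre dimensions need not be constant. To handle this, restrict to the open locus where $V$ is stable, which is nonempty by Proposition \ref{prop:existence}. On that locus, Lemma \ref{lemma:structure} (d) gives $h^0(L_3^{-1}\otimes F)=0$, so Riemann--Roch yields
\[
h^1(L_3^{-1} \otimes F) \ = \ 2d_3 - d_1 - d_2 + 2(g-1).
\]
For the first extension, $h^0(L_1 L_2^{-1})$ can jump when $d_1 \ge d_2$, but only on a closed subset. I would handle this by working over the whole relative $\mathrm{Ext}$ space via a cohomology-and-base-change argument: replace $\Pic$ by a suitable open subset, or use that the total space of extensions is the total space of a vector bundle after twisting by a high-degree line bundle. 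I would then take the closure of the image.

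\textbf{Irreducibility.} Every stable $E \in M(\ud)$ arises from this construction by Lemma \ref{lemma:structure} (a). The stable locus is open, and the parameter space is irreducible: it is an iterated vector bundle over an irreducible base. Hence $M(\ud)$, as the image of an irreducible variety under the classifying morphism to $M_X(G,\delta)$, is irreducible.

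\textbf{Dimension count.} The total parameter space has dimension
\[
3g + h^1(L_2^{-1}L_1) + h^1(L_3^{-1}F).
\]
We then subtract the automorphisms acting fibrewise: scalars on each extension space (projectivisation) and $\Aut$ of the data. For generic $d_2 - d_1 > 0$ we have $h^1(L_2^{-1}L_1) = d_2 - d_1 + g - 1$. Summing, and subtracting $1$ for each projectivisation and $1$ for the overall scalar of $V$ (stable, hence simple),
\[
3g + (d_2-d_1+g-1) + (2d_3-d_1-d_2+2g-2) - 2 \ = \ 6g - 5 + 2(d_3-d_1),
\]
which is $6g - 5 + s$. When $h^0(L_1L_2^{-1})\neq 0$, the extension space is larger, but then $F$ has extra automorphisms, and I expect the two effects to cancel: the dimension of $\mathrm{Ext}$ minus $\dim \Aut$ is $-\chi$, which is constant. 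Since the map to moduli may have positive-dimensional fibres (distinct $B$-reductions of the same $E$), we only obtain an upper bound, which is what is claimed.

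\textbf{Main obstacle.} The delicate point is making the family honest in the non-generic cases: the jumping of $h^0(L_1L_2^{-1})$, and the split extension $F$, which is allowed by Lemma \ref{lemma:structure} (d). For these I would stratify and show that each stratum maps to a set of dimension at most $6g-5+s$, by the $-\chi$ count above. I would also check that the generic stratum is dense, so that irreducibility survives. If that density fails, I would instead argue irreducibility by noting that stability forces the first extension to be generic enough, or fall back on a construction over a dense open subset of $\Pic$ whose image closure contains all of $M(\ud)$.
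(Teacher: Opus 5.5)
\textbf{There is a genuine gap in the case $d_1\ge d_2$.}

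For $d_1<d_2$ your construction is the paper's. There $h^0(X,L_2^{-1}L_1)=0$ for all $(L_1,L_2)$, so the extensions $F$ form a vector bundle $\bH_{d_1,d_2}$ over $\Pic^{d_1}(X)\times\Pic^{d_2}(X)$. Over the open set of $(F,L_3)$ with $h^0(X,L_3^{-1}\otimes F)=0$, the classes of $V$ form a vector bundle. The result is an irreducible space of dimension $6g-3+s$ surjecting onto $M(\ud)$. You should impose this open condition on $(F,L_3)$ before forming the bundle of classes, rather than restricting to ``where $V$ is stable'', which presupposes the family.

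For $d_1\ge d_2$, your claim that the parameter space is ``an iterated vector bundle over an irreducible base'' fails. Take $d_1-d_2=g-1$. Then the space of triples $(L_1,L_2,e)$ with $e\in H^1(X,L_2^{-1}L_1)$ has two irreducible components of dimension $2g$. One is the zero section. The other consists of the classes over the preimage of the theta divisor under $(L_1,L_2)\mapsto L_2^{-1}L_1\in\Pic^{g-1}(X)$. Both survive the condition $h^0(X,L_3^{-1}\otimes F)=0$.

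Your proposed repairs do not close this.
\begin{itemize}
\item Stratifying by $h^0(X,L_2^{-1}L_1)$ and bounding each stratum's image cannot show that a jumping stratum maps into the closure of the image of the generic one. That containment is exactly what irreducibility requires. This holds even granting your expected $-\chi$ cancellation, which itself needs the unipotent automorphisms $\Hom(L_2,L_1)$ of $F$ to act on $H^1(X,L_3^{-1}\otimes F)$ with full-dimensional orbits.
\item ``Stability forces the first extension to be generic'' is not available, since Lemma \ref{lemma:structure} (d) allows split $F$.
\item The fallback over a dense open subset of $\Pic$ just restates the problem.
\item Your twisting remark, made precise via the surjection $H^0(X,L_2^{-1}L_1(D)|_D)\to H^1(X,L_2^{-1}L_1)$ for $\deg D\gg 0$, would give an irreducible space onto $M(\ud)$. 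However, the dimension bound then needs the generic fibre to absorb $h^0(X,L_2^{-1}L_1)$ extra dimensions whenever $d_1-d_2\ge g$. That is the same unproved orbit count.
\end{itemize}

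\textbf{The missing idea is to build the flag from the other end.} If $d_1\ge d_2$ then $d_2\le d_1<\frac{\delta}{3}<d_3$, so $h^0(X,L_3^{-1}L_2)=0$ for all choices.
\begin{itemize}
\item The extensions $0\to L_2\to F'\to L_3\to 0$ form a vector bundle of rank $d_3-d_2+g-1$ over $\Pic^{d_2}(X)\times\Pic^{d_3}(X)$.
\item Consider the open set where $h^0(X,\Hom(F',L_1))=0$. It contains the data of every stable $V$ of type $\ud$ by Lemma \ref{lemma:structure} (d), so it is nonempty by Proposition \ref{prop:existence}.
\item Over it, the extensions $0\to L_1\to V\to F'\to 0$ form a vector bundle of rank $d_2+d_3-2d_1+2(g-1)$.
\end{itemize}
The total space is again irreducible of dimension $6g-3+s$ and surjects onto $M(\ud)$. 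Equivalently, dualise: $V^*$ has type $(-d_3,-d_2,-d_1)$ with $-d_3<-d_2$.

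The bound then follows because proportional classes in each extension space give isomorphic bundles, so fibres of the classifying map have dimension at least $2$. The correct subtraction is $2$, not $3$: diagonal scalars act trivially on the extension data, so there is no further ``$-1$ for the overall scalar of $V$''. Your displayed arithmetic subtracts $2$, but your verbal description would give $6g-6+s$, contradicting Proposition \ref{prop:GenFin}. Also keep the extension spaces affine rather than projective, since split $F$ or split $F'$ does occur.
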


\begin{proof}
As $d_1 < \frac{\delta}{3} < d_3$, by Proposition \ref{prop:existence} the locus $M( \ud )$ is nonempty. To obtain the other statements, we construct a parameter space for bundles in $M (\ud)$.

Suppose firstly that $d_1 < d_2$. Then $h^0 (X, \Hom (L_2 , L_1)) = 0$ for all pairs $(L_1, L_2) \in \Pic^{d_1} (X) \times \Pic^{d_2} (X)$. By Riemann--Roch, the direct image sheaf over $\Pic^{d_1} (X) \times \Pic^{d_2} (X)$ with fibre $H^1 (X, \Hom (L_2 , L_1))$ at $(L_1, L_2)$ is locally free of rank $d_2 - d_1 + (g-1)$. The associated vector bundle $\bH_{d_1, d_2}\to \Pic^{d_1} (X) \times \Pic^{d_2} (X)$ is an irreducible variety of dimension $3g + d_2 - d_1 - 1$ which parameterises all extensions $0 \to L_1 \to F \to L_2 \to 0$ for $L_1$ and $L_2$ of degree $d_1$ and $d_2$ respectively.

Next, consider the open set
\[
\Omega \ := \ \left\{ ( F, L_3 ) \in \bH_{d_1 , d_2} \times \Pic^{d_3} (X) : h^0 (X, \Hom (L_3, F)) = 0 \right\} .
\]
We claim that $\Omega$ is nonempty. To see this: By an argument using \cite[Proposition 1.1]{LNa} and the hypothesis $d_2 > d_1$, a general element $F \in \bH_{d_1 , d_2}$ is a stable vector bundle. 

Moreover, as $d_3 > \frac{\delta}{3}$ by hypothesis, also $\mu (F) = \frac{d_1 + d_2}{2} < d_3 = \mu (L_3)$. 
Thus there are no nonzero maps $L_3 \to F$ for a general $F$ in any fibre of $\bH_{d_1, d_2}|_{(L_1 , L_2)}$.

It now follows from Riemann--Roch that the direct image sheaf over $\Omega$ with fibre \linebreak $H^1 (X, \Hom (L_3, F))$ at $(F, L_3 )$ is locally free of rank $2 d_3 - d_1 - d_2 + 2 (g-1)$. Let $\bB_\ud \to \Omega$ be the associated vector bundle; this is an irreducible variety of dimension
\begin{multline*}
\dim \bH_{d_1, d_2} + \dim \Pic^{d_3} + \rank \bB_\ud \ = \\
\left( 3g + d_2 - d_1 - 1 \right) + g + \left( 2 d_3 - d_1 - d_2 + 2 (g-1) \right) \ = \ 6g - 3 + 2 (d_3 - d_1 ) \; ,
\end{multline*}
which parameterises all extensions $0 \to F \to V \to L_3 \to 0$ where $(F, L_3) \in \Omega$, and thereby diagrams of the form (\ref{diagram:iteratedext}).

If $d_1 \ge d_2$ then we may no longer have $h^0 (X, \Hom (L_2, L_1) ) = 0$. However, in this case we have $d_2 < d_3$. Then by a very similar procedure to the above we may construct an analogous irreducible variety $\bB_\ud$ of dimension $6g - 3 + 2 ( d_3 - d_1 )$ parameterising diagrams of the form (\ref{diagram:iteratedext}) for $d_2 < d_3$.

Let $\Phi_\ud \colon \bB_\ud \dashrightarrow M_X (G, \delta)$ be the classifying map, which by Proposition \ref{prop:existence} is defined on a nonempty open subset. 
By Lemma \ref{lemma:structure} (a), any stable $G$-bundle $E$ of topological type $\delta$ with a $B$-reduction of type $\ud$ fits into a diagram of the form (\ref{diagram:iteratedext}) where $h^0 (X, \Hom (L_3, F)) = 0 = h^0 (X, \Hom (F', L_1))$. By construction, $V=E(\C^3)$ is represented in $\bB_\ud$. It follows that $M(\ud)$ in (\ref{SegreTypeLocus}) is precisely the image of $\bB_\ud$ by $\Phi_\ud$.

As for the dimension: The fibres of $\Phi_\ud \colon \bB_\ud \dashrightarrow M_X (G, \delta)$ are of dimension at least $2$, because proportional extension classes in $H^1 (X, L_2^{-1} L_1 ) = \bH_{d_1, d_2}|_{(L_1 , L_2)}$ define isomorphic extensions $0 \to L_1 \to F \to L_2 \to 0$ by \cite[Lemma 3.3]{NR}, and similarly for $H^1 (X, \Hom (L_3, F)) = \cH|_{(F, L_3)}$. Thus the image of $\Phi_\ud$ has dimension at most $\dim \bB_\ud - 2 = 6g - 5 + 2 (d_3 - d_1)$.
\end{proof}

Computing the dimension of (\ref{SegreTypeLocus}) can be delicate. Rather than attempting to treat this exhaustively, we give some examples. We continue to denote by $\Phi_\ud \colon \bB_\ud \dashrightarrow M_X (G, \delta)$ the moduli map defined in Theorem \ref{thm:irr}.

\begin{proposition} \label{prop:GenFin}
Let $\ud$ satisfy $d_1 < \frac{\delta}{3} < d_3$ and $d_j - d_i \le g - 1$ for $1 \le i \le j \le 3$. Then the moduli map $\Phi_\ud$ has generic fibre of dimension $2$, and $\dim M (\ud) = 6g - 5 + 2 (d_3 - d_1)$. Moreover, for a general $E \in \Image ( \Phi_\ud )$, we have $s_B (E) = 2 (d_3 - d_1)$.
\end{proposition}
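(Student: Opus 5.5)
Since $\dim \bB_\ud = 6g-3+2(d_3-d_1)$ and Theorem \ref{thm:irr} already shows every fibre of $\Phi_\ud$ has dimension at least $2$, the dimension statement reduces to showing that a general fibre has dimension at most $2$. I would do this by counting $B$-reductions. Fix a general $V$ in the image. A point of $\Phi_\ud^{-1}(V)$ is determined, up to the two scalings of extension classes from Theorem \ref{thm:irr}, by a flag $L_1 \subset F \subset V$ of type $\ud$, i.e.\ by a section $\sigma$ of $E/B$ of numerical type $\ud$. So it is enough to show that a general $V$ admits only finitely many such flags.

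By Lemma \ref{lemma:normal}, the space of deformations of $\sigma$ as a section is governed by $H^0(X, \sigma^*T_\pi)$. Using the exact sequence of Lemma \ref{lemma:structure} (c), this space is controlled by $H^0(L_1^{-1}L_2)$ and $H^0(F^*\otimes L_3)$. I therefore aim to show that for a general point of $\bB_\ud$ we have $h^0(X,\sigma^*T_\pi)=0$. Then the Hilbert scheme of sections of type $\ud$ is zero-dimensional at $\sigma$, so the fibre is finite modulo the scalings and has dimension exactly $2$.

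To get this vanishing I would choose the data generically. Take $L_1,L_2,L_3$ general in their Picard varieties, so that $L_1^{-1}L_2$, $L_2^{-1}L_3$ and $L_1^{-1}L_3$ are general line bundles of degree $d_j-d_i \le g-1$. A general line bundle of degree at most $g-1$ has no sections (the case $0$ is fine too, since a general degree-zero bundle is nontrivial). The bundle $F^*\otimes L_3$ is an extension of $L_1^{-1}L_3$ by $L_2^{-1}L_3$, so its $h^0$ vanishes once both ends have no sections. The case $d_1 \ge d_2$ is handled the same way, since only the degree bounds $d_j-d_i\le g-1$ are used, with $d_2-d_1\le 0$ also giving vanishing for general choices. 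The main obstacle is that the infinitesimal statement only bounds deformations of $\sigma$ within $V$. I still have to exclude a positive-dimensional family of type-$\ud$ reductions of $V$ that does not pass through a general $\sigma$. I would handle this with an incidence-variety argument: on the open dense set where $h^0(\sigma^*T_\pi)=0$, the map from pairs $(V,\sigma)$ to $V$ is unramified, so its fibres there are finite. Every pair $(V,\sigma)$ is a point of $\bB_\ud$ modulo scalings, so a general $V$ in the image only meets pairs lying in that dense open set, which gives finiteness.

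For the final claim, $s_B(E)\le 2(d_3-d_1)$ holds by Lemma \ref{lemma:structure} (c), so I would argue by contradiction. Suppose a general $E$ had a reduction of type $\ud'$ with $s' := 2(d_3'-d_1') < s$. Then $E$ lies in $M(\ud')$, and by Theorem \ref{thm:irr} this locus has dimension at most $6g-5+s' < 6g-5+s = \dim M(\ud)$. Only finitely many types $\ud'$ occur, because types satisfying $d_1'<\delta/3<d_3'$ and $2(d_3'-d_1') < s$ form a finite set. A finite union of loci of smaller dimension cannot contain a general point of the irreducible $M(\ud)$, which gives $s_B(E)=2(d_3-d_1)$.
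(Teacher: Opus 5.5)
Your proposal is correct and follows essentially the same route as the paper: for general $(L_1,L_2,L_3)$ you get $h^0(X,\sigma^*T_\pi)=0$ from the filtration by $L_1^{-1}L_2$, $L_2^{-1}L_3$, $L_1^{-1}L_3$; the zero tangent space of the Hilbert scheme of sections gives finitely many reductions; the two scalings give fibre dimension $2$; and the dimension count from Theorem \ref{thm:irr} gives $s_B(E)=2(d_3-d_1)$ for general $E$. Your incidence-variety step makes explicit what the paper obtains by restricting $\Phi_\ud$ to the dense open set $\rho^{-1}(\Pi)$, which already suffices to compute the dimension of the image.
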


\begin{proof}
Firstly, we define the open set
\begin{multline*}
\Pi \ := \ \left\{ ( L_1 , L_2 , L_3 ) \in \Pic^{d_1} (X) \times \Pic^{d_2} (X) \times \Pic^{d_3} (X) : \right. \\
 \left. h^0 (X, L_i^{-1} L_j ) = 0 \hbox{ for } 1 \le i \le j \le 3 \right\} .
\end{multline*}
By hypothesis, we may assume that $\Pi$ is nonempty, and therefore dense in $\Pic^{d_1} (X) \times \Pic^{d_2} (X) \times \Pic^{d_3} (X)$. Consider the composed map
\[
\rho \colon \bB_\ud \ \to \ \bH_{d_1, d_2} \times \Pic^{d_3} (X) \ \to \ \Pic^{d_1} (X) \times \Pic^{d_2} (X) \times \Pic^{d_3} (X) .
\]
It will suffice to show that the restriction of $\Phi_\ud$ to the dense open subset $\rho^{-1} (\Pi)$ has fibres of dimension $2$.

Let $E$ be the $G$-bundle arising from an iterated extension belonging to $\rho^{-1} (\Pi )$. Write as usual $\pi \colon E/B \to X$ for the homogeneous space bundle and $\sigma \colon X \to E/B$ for the $B$-reduction of type $\ud$ arising from the iterated extension structure. Using Lemma \ref{lemma:structure} (c), we obtain a diagram
\[ \xymatrix{
 & & L_2^{-1} L_3 \ar[d] \\
L_1^{-1} L_2 \ar[r] & \sigma^* T_\pi \ar[r] & F^* \otimes L_3 \ar[d] \\
 & & L_1^{-1} L_3
} \]
By definition of $\Pi$ we have $h^0 (X, \sigma^* T_\pi ) = 0$. Denote by $N_\sigma$ the normal bundle of $\sigma (X)$ in $E/B$. Then $h^0 (X, N_\sigma) = 0$ by Lemma \ref{lemma:normal}.

Now $H^0 (X, N_\sigma)$ is the tangent space to the Hilbert scheme of sections $X \to E/B$ at the point corresponding to the image of $\sigma$. As this is zero, there are only finitely many $B$-reductions $\sigma_1 \colon X \to E/B$ of type $\ud$ where $(L_1, L_2, L_3) \in \Pi$; and $\Phi_\ud^{-1} (E)$ intersects $\rho^{-1} ( L_1, L_2, L_3 )$ for only finitely many $(L_1, L_2, L_3) \in \Pi$. For each such $\sigma_1$, the isomorphism classes of $F$ and $F'$ are determined as $\sigma_1^* \cF$ and $\sigma_1^* (\pi^* V/ \cL)$ as in the proof of Lemma \ref{lemma:structure} (c).

Now the $L_i$ are simple bundles and $h^0 (X, L_1^{-1} L_2) = 0$ by hypothesis. Thus by \cite[Lemma 3.3]{NR} the locus of extension classes in $H^1 (X, L_2^{-1} L_1) = \bH_{d_1, d_2}|_{(L_1 , L_2)}$ representing $F$ is of dimension at most $1$. Similarly, if $F$ is stable then in the same way the locus in $H^1 (X, L_3^{-1} \otimes F) = \bB_\ud|_{(F, L_3)}$ of extensions isomorphic to $V$ is of dimension at most $1$. Thus a general fibre of $\Phi_\ud|_\Pi$ has dimension $2$, and we obtain the dimension statement.

Lastly: By Theorem \ref{thm:irr}, the locus of bundles $E'$ with $s_B (E') = s' < s$ is of dimension at most $6g - 5 + s' \le 6g - 5 + s - 2$. Therefore, a general $E$ in the image of $\Phi_\ud$ must have $s_B (E) = 2 ( d_3 - d_1 )$.
\end{proof}

Proposition \ref{prop:GenFin} shows that, for a tuple $\ud$ satisfying $d_1 < \frac{\delta}{3} < d_3$ and $d_j - d_i \le g - 1$ for $1 \le i \le j \le 3$, a general bundle in $\Image(\Phi_{\ud})$ admits a reduction in $M(\ud)$ and, indeed, that reduction achieves the Segre value $s=s_B(E)=2(d_3-d_1)$.

\begin{remark} \label{BadBeh}
In cases where the numerical conditions in Proposition \ref{prop:GenFin} are not satisfied, the image of $\Phi_\ud$ may in fact belong to $M_X (G, \delta ; B, s')$ for $s' < 2 (d_3 - d_1):=s$. Suppose that $d_2 - d_1 \ge g + 1$. Each element of $\bB_\ud$ is an iterated extension
\[ \xymatrix{
L_1 \ar[r] & F \ar[r] \ar[d] & L_2 \\
 & V \ar[d] & \\
& L_3 , &
} \]
whence $s_B (E) \le s$ by Lemma \ref{lemma:structure} (c). But by the Hirschowitz bound \cite{Hir} or by Theorem \ref{GLrCase}, the Segre invariant $s_1 (F)$ defined in (\ref{sndefn}) satisfies $s_1 (F) \le g$. Thus $F$ has a line subbundle $M$ such that $\deg F - 2 \deg M \le g$; in other words,
\[
\deg M \ \ge \ \frac{1}{2} ( d_1 + d_2 - g  ) \ \ge \ \frac{1}{2} ( d_1 + (d_1 + g + 1) - g ) \ = \ \frac{1}{2} (2d_1 + 1 ) ;
\]
that is, $\deg M \ge d_1 + 1$. Therefore, $V$ is also an iterated extension of the form
\[ \xymatrix{
M \ar[r] & F \ar[r] \ar[d] & F/M \\
 & V \ar[d] & \\
& L_3 &
} \]
By Lemma \ref{lemma:structure} (c), therefore, $s_B (E) \le 2 (d_3 - (d_1 - 1)) \le s-2$. We conclude that in fact 
\[\Image \left( \Phi_\ud \right) \ \subseteq \ M_X (G, \delta ; B, s-2).\]

We observe that in this case the fibres of $\Phi_\ud$ must have dimension at least $4$, as \linebreak $\dim M_X (G, \delta; B, s-2 ) \le 6g - 7 + s$ by Theorem \ref{thm:irr}.
\end{remark}

We have now another application of Lemma \ref{lemma:deformations}, giving information on which of the loci $M(\ud')$ contain bundles specialising to bundles in $M(\ud)$ and proving Theorem \ref{thm:mainC} (ii). This generalizes the last sentence of Theorem \ref{GLrCase} for vector bundles. 

\begin{corollary}
The locus $M(d_1, d_2, d_3)$ belongs to
\begin{equation} \label{TwoTypes}
\overline{M (d_1 - 1, d_2 + 1, d_3)} \cap \overline{M (d_1, d_2 - 1, d_3 + 1)} .
\end{equation}
In particular, $M_X (G, \delta ; B, s)^\circ \subseteq \overline{M_X (G, \delta ; B, s + 2)}$.
\end{corollary}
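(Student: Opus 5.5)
The first containment follows directly from Lemma \ref{lemma:deformations}. Let $E \in M(\ud)$, so $E$ is stable and admits a $B$-reduction of type $\ud = (d_1,d_2,d_3)$. Lemma \ref{lemma:deformations} gives an irreducible family over $\tT$ containing $E$. Since stability is open, we may shrink $\tT$ to the nonempty open set of stable members, which still has $E$ in its closure. A general member of this family is stable and admits a $B$-reduction of type $(d_1-1,d_2+1,d_3)$, so it lies in $M(d_1-1,d_2+1,d_3)$. The classifying map $\tT \dashrightarrow M_X(G,\delta)$ is defined on this stable open set. By irreducibility of $\tT$, the point $E$ lies in the closure of the image of a dense open subset of $\tT$. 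Hence $E \in \overline{M(d_1-1,d_2+1,d_3)}$.

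The same argument with $\tT'$ gives $E \in \overline{M(d_1,d_2-1,d_3+1)}$. Both target tuples still satisfy $d_1' < \frac{\delta}{3} < d_3'$, so the target loci are among those considered in Theorem \ref{thm:irr}, although this is not actually needed.

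One point needs care: $E$ itself must be a point of the family, or at least a limit of stable members. In the proof of Lemma \ref{lemma:deformations}, $V$ is represented in $\tT$, and the stable locus is open and nonempty. So $E$ is indeed a point of the stable open set, and the closure statement holds in $M_X(G,\delta)$.

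For the statement about strata, let $E \in M_X(G,\delta;B,s)^\circ$. Choose a $B$-reduction $\sigma$ realising the minimum, of type $\ud$ with $2(d_3-d_1) = s$ by Lemma \ref{lemma:structure} (c). Then $E \in M(\ud)$, and by the first part $E \in \overline{M(d_1-1,d_2+1,d_3)}$. Every bundle $E'$ in $M(d_1-1,d_2+1,d_3)$ satisfies
\[
s_B(E') \ \le \ 2\bigl(d_3-(d_1-1)\bigr) \ = \ s+2
\]
by Lemma \ref{lemma:structure} (c). So $M(d_1-1,d_2+1,d_3) \subseteq M_X(G,\delta;B,s+2)$, the latter being the locus $\{s_B \le s+2\}$ as in (\ref{eq-strata}). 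This locus is closed by Theorem \ref{semicont}. Therefore
\[
E \ \in \ \overline{M_X(G,\delta;B,s+2)} \ = \ M_X(G,\delta;B,s+2).
\]

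Read this way, the final claim is nearly tautological, since $s \le s+2$. The intended meaning is presumably that $E$ is a limit of bundles of type with $d_3-d_1$ increased by one. If the intended claim is that $E$ lies in the closure of the stratum $M_X(G,\delta;B,s+2)^\circ$, one would need a general member of $M(d_1-1,d_2+1,d_3)$ to have $s_B$ exactly $s+2$. That is the main obstacle, and Remark \ref{BadBeh} shows it can fail. In the range of Proposition \ref{prop:GenFin} I would invoke that proposition to get equality for general members, and otherwise settle for the closed-locus version stated above.
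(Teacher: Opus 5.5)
Your argument for the two containments is the paper's own: Lemma \ref{lemma:deformations} gives irreducible deformations of $E$ whose general (stable) members lie in $M(d_1-1,d_2+1,d_3)$, resp.\ $M(d_1,d_2-1,d_3+1)$, and irreducibility of the base together with continuity of the classifying map puts $E$ in the closures. The paper gives no separate argument for the ``in particular''; your reading through the closed locus $\{s_B \le s+2\}$ of (\ref{eq-strata}) is correct, if tautological as you say, and your caution about the open-stratum reading is justified, since that reading fails at the generic value of $s_B$ (e.g.\ $s = 3(g-1)$ under the hypotheses of Corollary \ref{cor:Hirschowitz}), where the $s$-stratum is nonempty but the $(s+2)$-stratum is empty.
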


\begin{proof}
Suppose that $E \in M (\ud)$. By Lemma \ref{lemma:deformations}, there exists a deformation of $E$ of which a general element admits a $B$-reduction of type $(d_1 - 1, d_2 + 1, d_3)$. Thus $E$ belongs to the closure of the irreducible locus $M (d_1 - 1, d_2 + 1, d_3)$. In the same way, $E \in \overline{M(d_1, d_2 - 1, d_3 + 1)}$.
\end{proof}

\subsection{A Hirschowitz bound for \texorpdfstring{$\GL_3$}{GL\_3}-bundles} \label{sect:Hirschowitz}

We continue to abbreviate $\GL_3$ to $G$, and write $B$ for the Borel subgroup of upper triangular invertible matrices.

By \cite[Theorem 1.1]{HN} for any reductive group $H$ and parabolic $P \subset H$, one has $s_P (E) \le g \cdot \dim (H/P)$. This shows in particular that $s_B (E) \le 3g$ for any $G$-bundle $E$. The upper bound $g \cdot \dim (G/P)$ has been sharpened in case $P$ is a maximal parabolic in $\GL_r$ in \cite{Hir} (see also \cite{CH0}) and for certain parabolics in $\Sp_{2n}$ and $\SO_r$ in \cite{CH2, CH3}. Here we use Proposition \ref{prop:GenFin} to obtain a sharp upper bound for $s_B$ in a similar way. 

As the computations become rather tedious in general, we content ourselves with proving the bound (Theorem \ref{thm:mainC} (iii)) under convenient numerical hypotheses on $\delta$ and $g$.

\begin{corollary}[Hirschowitz bound] \label{cor:Hirschowitz}
Suppose that $\delta \equiv 0 \;(\!\!\mod 3)$ and $g \equiv 1 \;(\!\!\mod 6)$. Then for any $E \in M_X (G, \delta)$ we have $s_B (E) \le 3(g-1)$.
\end{corollary}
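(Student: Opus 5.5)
The plan is a dimension count. We exhibit one type $\ud$ with $2(d_3-d_1)=3(g-1)$ such that $M(\ud)$ is dense in $M_X(G,\delta)$. Then a general $E$ satisfies $s_B(E)\le 3(g-1)$ by Lemma \ref{lemma:structure} (c). Since $\{s_B\le 3(g-1)\}$ is closed by Theorem \ref{semicont}, the bound holds for every $E$.

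Write $\delta=3d_0$ and $g-1=6m$. The target dimension is $\dim M_X(G,\delta)=9(g-1)+1=9g-8$. By Theorem \ref{thm:irr}, $\dim\bB_\ud=6g-3+2(d_3-d_1)$ and $M(\ud)$ is irreducible. It therefore suffices to find $\ud$ with $2(d_3-d_1)=3(g-1)=18m$ such that a general fibre of $\Phi_\ud$ has dimension exactly $2$. Then $\dim M(\ud)=6g-5+3g-3=9g-8$, so $M(\ud)$ is dense.

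Proposition \ref{prop:GenFin} cannot be applied directly, because it requires $d_3-d_1\le g-1$ while here $d_3-d_1=9m$. Instead I will rerun its proof, whose key input is $h^0(X,\sigma^*T_\pi)=0$ for a general point of $\bB_\ud$. I choose
\[
\ud=(d_0-6m,\ d_0,\ d_0+3m),
\]
so that $d_1<\frac{\delta}{3}<d_3$ and $\deg L_1^{-1}L_2=6m=g-1$, $\deg L_2^{-1}L_3=3m$, $\deg L_1^{-1}L_3=9m$.

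I then bound $h^0(X,\sigma^*T_\pi)$ using the filtration in Lemma \ref{lemma:structure} (c) and the diagram in the proof of Proposition \ref{prop:GenFin}.
\begin{itemize}
\item A general line bundle of degree $g-1$ has no sections, so $h^0(L_1^{-1}L_2)=0$.
\item The quotient $F^*\otimes L_3$ is an extension $0\to L_2^{-1}L_3\to F^*\otimes L_3\to L_1^{-1}L_3\to 0$. For general $L_i$ we have $h^0(L_2^{-1}L_3)=0$, $h^1(L_2^{-1}L_3)=g-1-3m=3m$ and $h^0(L_1^{-1}L_3)=9m-6m=3m$.
\item Hence $h^0(F^*\otimes L_3)=0$ exactly when the coboundary $H^0(L_1^{-1}L_3)\to H^1(L_2^{-1}L_3)$, a map between two $3m$-dimensional spaces, is an isomorphism.
\end{itemize}
This coboundary is cup product with the extension class of $F$, an element of $H^1(L_2^{-1}L_1)$. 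Equivalently, via Serre duality, it is given by a multiplication pairing into $H^1(\Kx\otimes\ldots)$.

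The main obstacle is proving that this coboundary is injective for a general extension class $e\in H^1(L_2^{-1}L_1)$ and general $(L_1,L_2,L_3)$. Dually, the multiplication map $H^0(L_1^{-1}L_3)\otimes H^0(\Kx L_2L_3^{-1})\to H^0(\Kx L_2L_1^{-1})$ should have no nonzero element of $H^0(L_1^{-1}L_3)$ in the kernel of pairing with a general $e$. I would try first to check this via the base-point-free pencil trick, or by degenerating to $e$ supported at general points. Here $e$ lies in the secant picture of $|\Kx L_2L_1^{-1}|^*$, as in the proof of Lemma \ref{lemma:closure}, and then I would show the induced $3m\times 3m$ matrix is nonsingular. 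The congruence $g\equiv1\ (\mathrm{mod}\ 6)$ is what makes the degrees $6m$ and $3m$ integral and balances the two $3m$-dimensional spaces exactly.

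Once $h^0(\sigma^*T_\pi)=0$ holds generically, the rest of the proof of Proposition \ref{prop:GenFin} applies verbatim:
\begin{itemize}
\item $E$ has only finitely many $B$-reductions of type $\ud$;
\item \cite[Lemma 3.3]{NR} gives one-dimensional loci of extension classes at each of the two extension steps.
\end{itemize}
So the fibres of $\Phi_\ud$ are generically $2$-dimensional, $M(\ud)$ is dense, and the corollary follows by semicontinuity.
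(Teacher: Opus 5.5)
Your skeleton is the paper's: the same differences $d_2-d_1=g-1$ and $d_3-d_1=\frac{3}{2}(g-1)$, generic fibre dimension $2$ for $\Phi_\ud$ via $h^0(X,N_\sigma)=0$ as in Proposition \ref{prop:GenFin}, and closedness of $\{s_B\le 3(g-1)\}$. There are two problems.

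\textbf{The tuple has the wrong total degree.} Your $(d_0-6m,\,d_0,\,d_0+3m)$ has $d_1+d_2+d_3=\delta-3m\neq\delta$, so it is not of type $\delta$. You need $(d_0-5m,\,d_0+m,\,d_0+4m)$, which is the paper's $(-5u+k,u+k,4u+k)$. Everything afterwards uses only the differences $6m,3m,9m$, so this slip is harmless once corrected. This is also where the full congruence $g\equiv 1 \pmod 6$ is forced, together with $3\mid\delta$.

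\textbf{The key vanishing is left unproved.} The one step not already contained in Proposition \ref{prop:GenFin} is $h^0(X,F^*\otimes L_3)=0$ for general data, i.e.\ that the coboundary $H^0(L_1^{-1}L_3)\to H^1(L_2^{-1}L_3)$ is an isomorphism. You leave exactly this open, saying only that you ``would try'' to prove it, so the argument is incomplete at its crux.

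The paper closes this step without any matrix computation:
\begin{itemize}
\item Since $g$ is odd and $\deg F=d_1+d_2$ is even, a general stable rank-two $F$ has $s_1(F)=g-1$ (Theorem \ref{GLrCase}). Its maximal line subbundles therefore have degree exactly $d_1$.
\item Hence the extensions in $\bH_{d_1,d_2}$ dominate the moduli of rank-two bundles of degree $d_1+d_2$.
\item So $F^*\otimes L_3$ is a general stable bundle of rank $2$ and degree $2(g-1)$, which has no sections by Hirschowitz's lemma (the theta divisor is proper).
\end{itemize}

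Your degeneration idea can also be completed directly. Take $e=\sum_{i=1}^{3m}c_i\,\mathrm{ev}_{x_i}$ in the span of $3m$ general points of $X\subset|\Kx L_2L_1^{-1}|^*$, with all $c_i\neq 0$. The pairing $(s,t)\mapsto\langle e,st\rangle$ on $H^0(L_1^{-1}L_3)\times H^0(\Kx L_2L_3^{-1})$ is then $\sum_i c_i\,s(x_i)t(x_i)$. This is nondegenerate, because evaluation at $3m$ general points is an isomorphism on each of these $3m$-dimensional spaces. Nondegeneracy is an open condition in $e$, so it holds for general $e$. Either argument would finish your proof.
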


\begin{proof}
Firstly, by Theorem \ref{thm:irr}, we have $\dim M (\ud) \le 6g - 5 + s$ where $s = 2(d_3 - d_1)$. Thus $M (\ud)$ can be dense in $M_X (G, \delta)$ only if $6g - 5 + s \ge 9g - 8=\dim M_X (G, \delta)$; that is, $s \ge 3 (g-1)$. Let us show that one $M(\ud)$ is indeed dense for the value $s=3(g-1)$. 

By hypothesis, $g = 6u + 1$ for some $u \ge 1$. We may take any tuple of the form $(d_1, d_2, d_3) \ = \ (-5u+k, u+k, 4u+k)$, so that
\begin{enumerate}
\item[(i)] $d_2 - d_1 = 6u = g-1$, and
\item[(ii)] $d_3 - d_1 = 9u = \frac{3(g - 1)}{2}$
\end{enumerate}
As $g = 6u + 1$ is odd and $d_1 + d_2 = -4u + 2k$ is even, by Theorem \ref{GLrCase} a general $F$ of degree $d_1 + d_2$ satisfies $s_1 (F) = g-1$. As $d_2 - d_1 = g-1$ by (i), a general extension $0 \to L_1 \to F \to L_2 \to 0$ represented in $\bH_{d_1, d_2}$ may be taken to be general in moduli. If $L_3$ is also general, then by Hirschowitz's Lemma $F^* \otimes L_3$ may be assumed to be Brill--Noether general. Therefore, applying (i) and (ii) we obtain
\begin{multline*}
h^0 (X, F^* \otimes L_3 ) \ = \ \chi (X, F^* \otimes L_3 ) \ = \ 2 d_3 - d_2 - d_1 - 2(g - 1) \ = \\
2 (d_3 - d_1) - (d_2 - d_1) - 2 (g-1) \ = \ 3 (g-1) - (g-1) - 2(g-1) \ = \ 0 .
\end{multline*}
By (i), also $h^0 (X, L_1^{-1} L_2 ) = 0$ for general $L_1$ and $L_2$. As $N_\sigma$ is an extension \linebreak $0 \to L_1^{-1} L_2 \to N_\sigma \to F^* \otimes L_3 \to 0$, we have $h^0 (X, N_\sigma ) = 0$. Arguing as in the proof of Proposition \ref{prop:GenFin}, we see that a general fibre of $\Phi_\ud \colon \bB_\ud \dashrightarrow M_X (G, \delta)$ has dimension $2$, and the image has dimension
\[
\dim \bB_\ud - 2 \ = 6g-3+2(d_3-d_1)-2=\ 6g - 5 + 3(g - 1) \ = \ 9g - 8 \ = \ \dim M_X (G, \delta) \;.
\]
In particular, $\Phi_\ud$ is dominant, and $s_B (E) \le 3(g-1)$ for a general $E \in M_X (G, 0)$. By the semicontinuity proven in Theorem \ref{semicont}, the same is true for all $G$-bundles $E$ of topological type $\delta$.
\end{proof}

We conclude by giving as complete an overview as possible in a particular case.  

\begin{example}
Set $\delta = 0$ and $g = 7$. In the following diagram\footnote{Diagram produced using GeoGebra, \texttt{www.geogebra.org}.}, the horizontal and vertical axes represent $d_1$ and $d_3$ respectively, and $d_2 = -d_1 - d_3$.

\begin{center}

\includegraphics[scale=0.4]{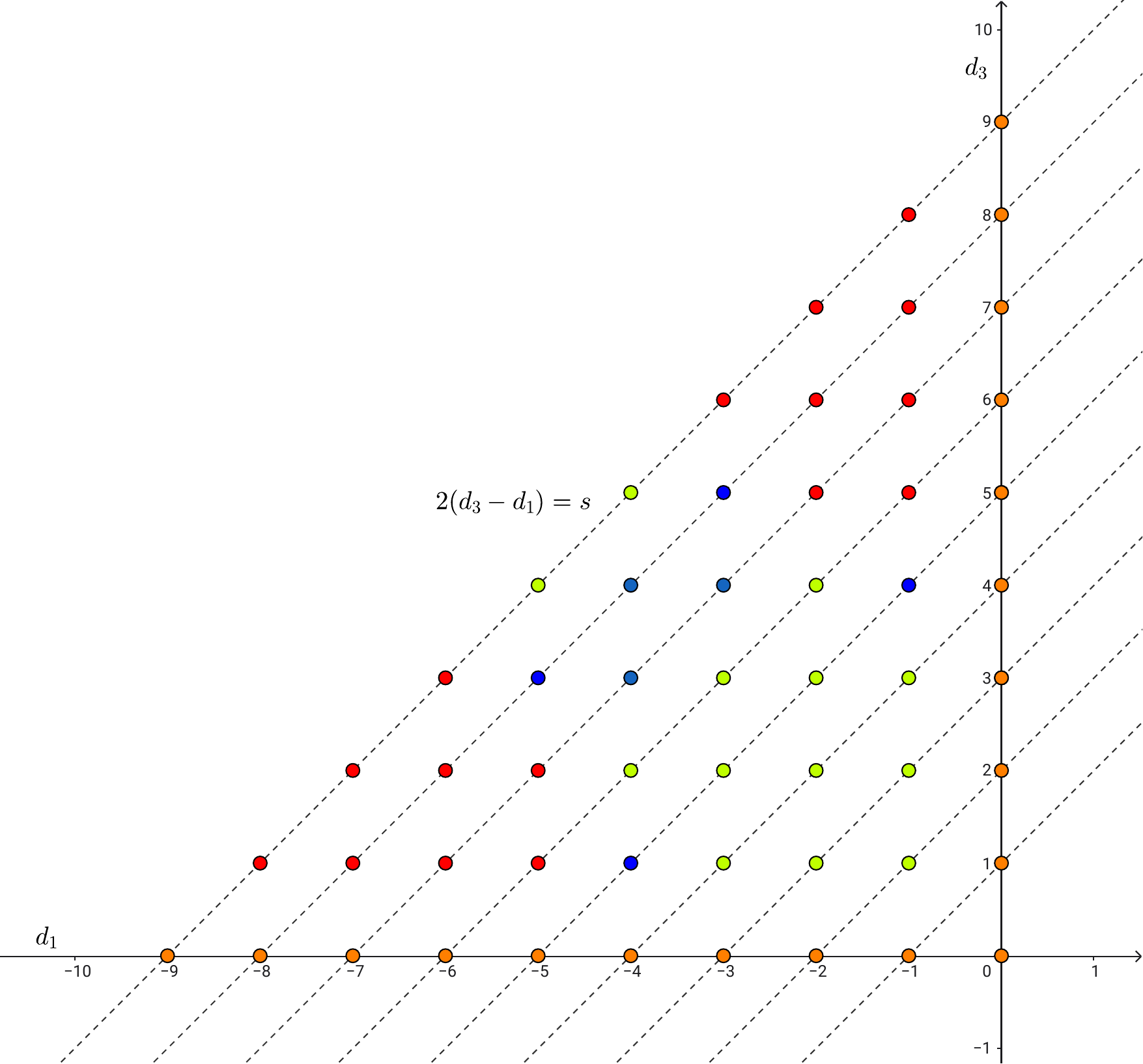}

\end{center}

\noindent By Theorem \ref{thm:irr}, all integral points in the interior of the second quadrant give rise to $\ud$ such that $M (\ud)$ is nonempty. The dotted lines parameterise those $\ud$ which in addition satisfy $2(d_3 - d_1) = s$; we show these ``level sets'' for $0 < \frac{s}{2} \le \frac{3}{2} (g-1) = 9$, in view of the Hirschowitz bound in Corollary \ref{cor:Hirschowitz}.

Now since $\delta = 0$, the operation of dualising vector bundles (corresponding to extending the structure group to $\GL_3$ by $\gamma \mapsto {^t\gamma^{-1}}$) sends $M(d_1, d_2, d_3)$ isomorphically to $M(-d_3, -d_2, -d_1)$. Combining this observation with the results above, we have:
\begin{itemize}
\item[\textcolor{green}{\textbullet}] The green dots represent those $\ud$ for which $M(\ud)$ attains the maximal dimension $6g - 5 + s$ by Proposition \ref{prop:GenFin} or Corollary \ref{cor:Hirschowitz}, and then $M(\ud)$ is generically contained in $M_X (G, 0; B, 2(d_3-d_1))^{\circ}$. In particular, both irreducible components $M(-5, 1, 4)$ and $M(-4, -1, 5)$ are dense in the moduli space $M_X (G, 0)$, and so a general $G$-bundle $E$ admits $B$-reductions of both types.
\item[\textcolor{red}{\textbullet}] The red dots represent those $\ud$ discussed in Remark \ref{BadBeh}, having \linebreak $\dim M(\ud)\leq 6g-7+s < 6g - 5 + s$, with $s=2(d_3-d_1)$ and belonging to a stratum $M_X (G, 0; B, s' )$ for $s' < s$. For example, this is the case for the tuple $\ud=(-8,7,1)$. 
\item[\textcolor{blue}{\textbullet}] The blue dots represent those types $\ud$ which are not covered by the aforementioned results, and whose dimension and location remain to be ascertained. 
\item[\textcolor{orange}{\textbullet}] Finally, the orange dots represent those $\ud$ for which a $G$-bundle admitting a $B$-reduction of type $\ud$ is, at best, strictly semistable.
\end{itemize}
\end{example}

\end{document}